\newtheorem{thm}{Theorem}[section]
\newtheorem{cor}[thm]{Corollary}
\newtheorem{prop}[thm]{Proposition}
\newtheorem{lem}[thm]{Lemma}
\newtheorem{rk}[thm]{Remark}
\let \ssection=\section
\renewcommand{\section}{\setcounter{equation}{0}\ssection}
\newcommand{\ds}[1]{{\displaystyle{#1 }}}
\renewcommand{\leq}{\leqslant}
\renewcommand{\geq}{\geqslant}
\newcommand{\ind}{\mathds{1}}
\newcommand{\dd}{{\mathrm d}}
\newcommand{\ee}{{\mathrm e}}
\newcommand{\Span}{\mathrm{Span}}
\newcommand{\Supp}{\mathrm{Supp}}
\newcommand{\dC}{\mathbb{C}}
\newcommand{\dE}{\mathbb{E}}
\newcommand{\dP}{\mathbb{P}}
\newcommand{\dR}{\mathbb{R}}
\newcommand{\dV}{\mathbb{V}}
\newcommand{\dZ}{\mathbb{Z}}
\newcommand{\cB}{\mathcal{B}}
\newcommand{\cC}{\mathcal{C}}
\newcommand{\cL}{\mathcal{L}}
\newcommand{\cM}{\mathcal{M}}
\newcommand{\cN}{\mathcal{N}}
\newcommand{\cP}{\mathcal{P}}
\newcommand{\cR}{\mathcal{R}}
\newcommand{\cU}{\mathcal{U}}
\newcommand{\cW}{\mathcal{W}}
\newcommand{\bD}{\mathbf{D}}
\newcommand{\bL}{\mathbf{L}}
\newcommand{\bM}{\mathbf{M}}
\newcommand{\bS}{\mathbf{S}}
\newcommand{\bU}{\mathbf{U}}
\newcommand{\bW}{\mathbf{W}}
\newcommand{\bX}{\mathbf{X}}
\newcommand{\bY}{\mathbf{Y}}
\newcommand{\bZ}{\mathbf{Z}}
\newcommand{\be}{\mathbf{e}}
\newcommand{\bbf}{\mathbf{f}}
\newcommand{\bm}{\mathbf{m}}
\newcommand{\bu}{\mathbf{u}}
\newcommand{\bv}{\mathbf{v}}
\newcommand{\bw}{\mathbf{w}}
\newcommand{\by}{\mathbf{y}}
\newcommand{\bz}{\mathbf{z}}
\newcommand{\ABS}[1]{{{\left| #1 \right|}}} 
\newcommand{\BRA}[1]{{{\left\{#1\right\}}}} 
\newcommand{\SCA}[1]{{{\left<#1\right>}}} 
\newcommand{\NRM}[1]{{{\left\| #1\right\|}}} 
\newcommand{\PAR}[1]{{{\left(#1\right)}}} 
\newcommand{\SBRA}[1]{{{\left[#1\right]}}} 
\title{A fixed-point equation approach for the superdiffusive elephant random walk}
\author{Hélène Guérin\thanks{Université du Québec à Montréal (UQAM), Département de Mathématiques, Canada. Funded by Natural Sciences and Engineering Research Council of Canada (NSERC) discovery grant (RGPIN-2020-07239); 
\texttt{guerin.helene@uqam.ca}},\ Lucile Laulin\thanks{Nantes Université, Laboratoire de Mathématiques Jean Leray, France. Funded by Centre Henri Lebesgue,
programme ANR-11-LABX-0020-01; \texttt{lucile.laulin@math.cnrs.fr}}\; and Kilian Raschel\thanks{Université d'Angers, CNRS, Laboratoire Angevin de Recherche en Mathématiques, France. This project has received funding from the European Research Council (ERC) under the European Union's Horizon 2020 research and innovation programme under the Grant Agreement No. 759702 and from Centre Henri Lebesgue,
programme ANR-11-LABX-0020-01; \texttt{raschel@math.cnrs.fr}}}
\date{\today}
\begin{document}

\maketitle
\begin{abstract}
We study the elephant random walk in arbitrary dimension $d\geq 1$. Our main focus is the limiting random variable appearing in the superdiffusive regime.
Building on a link between the elephant random walk and P\'olya-type urn models, we prove a fixed-point equation (or system in dimension two and larger)\ for the limiting variable. Based on this, we deduce several properties of the limit distribution, such as the existence of a density with support on $\mathbb R^d$ for $d\in\{1,2,3\}$, and we bring evidence for a similar result for $d\geq 4$. We also investigate the moment-generating function of the limit and give, in dimension $1$, a non-linear recurrence relation for the moments.
\end{abstract}

\begin{quote}
{\bf Keywords}: Elephant random walk, Multi-dimensional elephant random walk, P\'olya-type urn models,  Asymptotic distribution, Fixed-point equations, Krylov subspaces

{\bf AMS MSC 2020}: 60E05, 	60E10, 60J10, 60G50
\end{quote}

\section{Introduction and main results}
\paragraph{Context}
The one-dimensional elephant random walk (ERW)\ was introduced by Schütz and Trimper \cite{Schutz2004} in 2004, in order to see how memory could induce subdiffusion in random walk processes. It turned out that the ERW is, in fact, always at least diffusive. Nevertheless, the easy definition of the process together with the underlying deepness of the results has lead to a great interest from mathematicians over the last two decades. 

The process $(S_n)_{n\geq 0}$ is defined as follows. We denote by $(X_n)_{n\geq 0}$ its successive steps.
The elephant starts at the origin at time zero: $S_0 = 0$. For the first step $X_1$, the elephant moves one step to the right with probability 
$q$ or one step to the left with probability $1-q$, for some $q$ in $[0,1]$. The next steps are performed by choosing uniformly at random an integer $k$ among the previous times. Then the elephant moves exactly in the same direction as at time $k$ with probability $p\in[0,1]$, or in the opposite direction with probability $1-p$.
In other words, defining for all $n \geq 1$, 
\begin{equation}
\label{STEPS}
   X_{n+1} = \left \{ \begin{array}{lll}
    +X_{k} &\text{with probability  $p$,} \\[1em]
    -X_{k} &\text{with probability $1-p$,}
   \end{array} \right.
\end{equation}
with $k\sim\cU\BRA{1,\ldots,n}$, the position of the ERW at time $n+1$ is given by
\begin{equation}
\label{POSERW}
   S_{n+1}=S_{n}+X_{n+1}.
\end{equation}
The probability $q$ is called the first step parameter and $p$ the memory parameter of the ERW. An ERW trajectory is sampled in Figure~\ref{fig:simulations_ERW}.

A wide range of literature is now available on the ERW and its extensions in dimension $d=1$, see for instance
\cite{BaurBertoin2016,Coletti2017,Cressoni2013,Cressoni2007,daSilva2013,Kursten2016}. In dimension $d \geq 1$, the multi-dimensional ERW (MERW)\ was later introduced by Bercu and Laulin \cite{BercuLaulin2019}. 
Back to dimension $1$, the behavior of the process, in particular its dependency on the value of $p$ with respect to the critical value $3/4$, is now well understood. In the diffusive regime $p< 3/4$ and the critical regime $p=3/4$, a strong law of large numbers and a central limit theorem for the position, properly normalized, were established, see \cite{BaurBertoin2016,Coletti2017,ColettiN2017,Schutz2004} and the more recent contributions \cite{BercuHG2019,Coletti2019,Fan2020,Gonzales2020,roy2024elephant,roy2024phase,Vazquez2019}. The main changes between the two regimes are the rate of the associated convergences.

\begin{figure}[ht!]
    \centering
    \includegraphics[width=7cm,height=5.5cm]{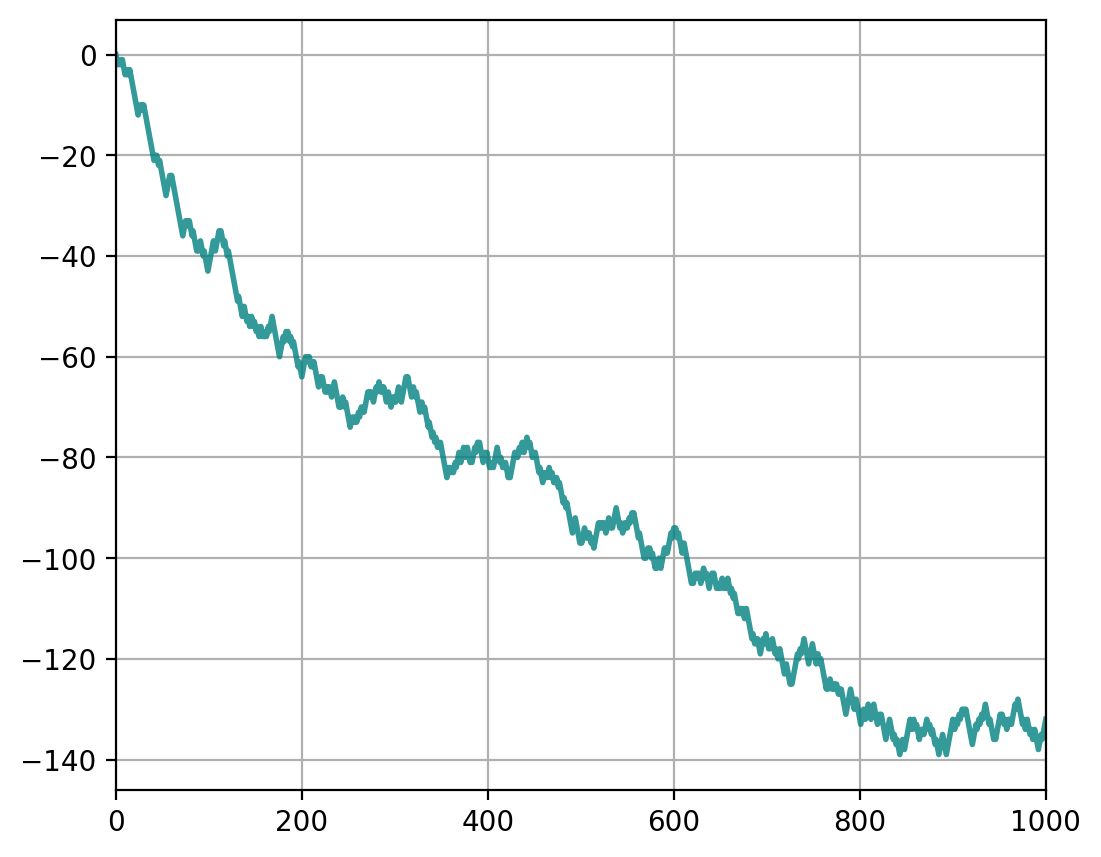}
    \caption{A trajectory of the Elephant random walk in the superdiffusive regime until time $n=1000$, with $p=0.87$ and $q=0.9$.} 
    \label{fig:simulations_ERW}
\end{figure}

The superdiffusive regime $p>3/4$ is even more intriguing. Introduce 
\begin{equation*}
    a:=2p-1.
\end{equation*}
It has been established that
\begin{equation}
\label{ASCVGS}
    \lim_{n \rightarrow \infty} \frac{S_n}{n^{a}}=L_q \quad \text{a.s.},
\end{equation}
where $L_q$ is a non-degenerate, non-Gaussian random variable, see \cite{BaurBertoin2016,Bercu2018,Coletti2017}. This result has been extended to MERW in \cite{BercuLaulin2019}, with a rate of convergence depending on the dimension $d$. In dimension $1$, it has been proved in \cite{Kubota2019} that the fluctuations of the ERW around its limit $L_q$
are Gaussian, as indeed
\begin{equation}
\label{ANS}
\sqrt{n^{2a-1}} \Bigl(\frac{S_n}{n^{a}} -L_q \Bigr) \underset{n\to \infty}{\overset{\cL}\longrightarrow} \cN \Bigl(0, \frac{1}{2a-1}\Bigr).
\end{equation}
See \cite{Bertenghi2020} for a similar result in dimension $d\geq 2$. These results were established thanks to a martingale approach.

Little is known about the limit variable $L_q$, although it is a natural question to try to find out more about its law. Indeed, among the open questions concerning the (M)ERW over the last two decades, two stood out: the law of the random variable $L_q$ arising in the superdiffusive regime, and the recurrence and transience of the MERW, depending on the regime. In this work we give a sharp answer to the first question, as detailed below. The second question was first addressed by Bertoin in dimension $d=1$ in \cite{Bertoinb2021}, establishing that the ERW is recurrent in the diffusive regime and transient in the superdiffusive regime. However, until recently, this remained unresolved for dimensions $d\geq 2$. Qin in \cite{qin2023recurrence} recently provided an answer, showing that in dimension $d=2$, the MERW behaves similarly to the ERW, whereas in dimensions $d\geq3$, it is always transient, see also \cite{curien2023recurrence} for an alternative proof of the recurrence in the diffusive regime for $d=2$.

\paragraph{Our contributions}

In this article, we are interested in studying the distribution of the asymptotics of the superdiffusive ERW, first in  dimension $1$ (the variable $L_q$ appearing in \eqref{ASCVGS}), then in higher dimension. Indeed, before writing this paper, a lot of natural questions regarding this limiting random variable (such as the existence of a density, an explicit formula for the moments, the moment problem, the finiteness of the moment-generating function, etc.)\ were still open. While it is generally very difficult to obtain results for the ERW in dimension $d\geq 2$, an interesting feature of the present work is that the techniques work in all dimensions, even though we don't have a complete proof in dimension $d\geq 4$: our result is proved in dimensions $2$ and $3$, and we give a method for $d\geq4$ based on a (linear algebra)\ analysis of a certain Krylov space. The main idea of this paper is to establish fixed-point equations  related to  the limit distribution of the ERW.

\paragraph{Dimension $1$}

Using a connection between the ERW and P\'olya-type urns, we first establish in Section~\ref{S-FPE} that in dimension $d=1$, if we impose $q=1$, which means that the elephant first goes to the right, then the random variable $L_1$ satisfies the following fixed-point equation, from which everything will follow:
\begin{thm}
\label{thm:fixed-point-dim1}
Assume $a=2p-1>\frac{1}{2}$. Let $L_1$ be the limit variable appearing in \eqref{ASCVGS} with $q=1$. 
The distribution of $L_1$ satisfies the fixed-point equation 
  \begin{equation}
 \label{eq:fixed-point-X-only}
      L_1 \overset{\cL}= V^{a} L_1^{(1)} + (2\xi_p-1)(1-V)^{a}L_1^{(2)},
 \end{equation}
  where
 \begin{itemize}
     \item $V$ is a uniformly distributed random variable on $[0,1]$;
     \item $\xi_p$ is a Bernoulli distributed random variable with parameter $p$;
     \item $L_1^{(1)}$ and $L_1^{(2)}$ have the same distribution as $L_1$; 
     \item all the variables are independent.
 \end{itemize}
\end{thm}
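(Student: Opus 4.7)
The plan is to exploit the Athreya--Karlin continuous-time embedding of the two-color Pólya-type urn naturally associated with the ERW. Encoding a $+1$ step as a red ball and a $-1$ step as a blue ball, this urn has replacement matrix $\bigl(\begin{smallmatrix} p & 1-p \\ 1-p & p \end{smallmatrix}\bigr)$, with eigenvalues $1$ and $a = 2p-1$; the superdiffusive condition $a > \tfrac12$ is precisely the assumption that the non-principal eigenvalue exceeds $\tfrac12$. When $q=1$, the urn starts from a single red ball. Giving each ball an independent $\mathrm{Exp}(1)$ clock, the total count $N(t)$ is a standard Yule process, and the signed count $S(t) = \#\text{red}(t) - \#\text{blue}(t)$ sampled at the jump times reproduces the discrete ERW, so that \eqref{ASCVGS} translates into $S(t)/N(t)^a \to L_1$ a.s.\ as $t \to \infty$.

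\textbf{Splitting at the first jump.} Let $T_1 \sim \mathrm{Exp}(1)$ be the first ring time. At time $T_1$ the initial red ball produces a second ball, whose color is red with probability $p$ and blue with probability $1-p$, i.e.\ whose sign is $2\xi_p-1$ with $\xi_p \sim \mathrm{Bernoulli}(p)$. Immediately after $T_1$ there are two balls, each evolving as an independent urn; denote by $(N^{(i)}(s), S^{(i)}(s))_{i=1,2}$ the two sub-processes indexed by the internal time $s = t-T_1$, so that
\[
N(t) = N^{(1)}(t-T_1) + N^{(2)}(t-T_1), \qquad S(t) = S^{(1)}(t-T_1) + S^{(2)}(t-T_1).
\]

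\textbf{Passing to the limit.} Classical properties of the Yule process give $\ee^{-s} N^{(i)}(s) \to W^{(i)}$ a.s., with $W^{(1)}, W^{(2)}$ independent $\mathrm{Exp}(1)$ variables. It follows that the proportion $N^{(1)}(t-T_1)/N(t) \to V := W^{(1)}/(W^{(1)}+W^{(2)})$, which is uniform on $[0,1]$. Applying the first paragraph's convergence to each sub-process, $S^{(i)}(t-T_1)/N^{(i)}(t-T_1)^a \to L_1^{(i)}$ a.s., where sub-process~$1$ starts from a red ball, whence $L_1^{(1)} \overset{\cL}= L_1$, and sub-process~$2$ starts from a ball of sign $2\xi_p-1$, whence $L_1^{(2)} \overset{\cL}= (2\xi_p-1)\widetilde L_1$ for an independent copy $\widetilde L_1$ of $L_1$ (using the ERW symmetry $X_1 \leftrightarrow -X_1$). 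By the branching property, the random elements $V$, $\xi_p$, $L_1^{(1)}$ and $\widetilde L_1$ are mutually independent. Dividing the decomposition of $S(t)$ by $N(t)^a$,
\[
\frac{S(t)}{N(t)^a} = \PAR{\frac{N^{(1)}(t-T_1)}{N(t)}}^a \frac{S^{(1)}(t-T_1)}{N^{(1)}(t-T_1)^a} + \PAR{\frac{N^{(2)}(t-T_1)}{N(t)}}^a \frac{S^{(2)}(t-T_1)}{N^{(2)}(t-T_1)^a},
\]
and sending $t \to \infty$ yields $L_1 \overset{\cL}= V^a L_1^{(1)} + (1-V)^a (2\xi_p - 1)\widetilde L_1$, which is exactly the claimed equation \eqref{eq:fixed-point-X-only}.

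\textbf{Main difficulty.} The delicate point is to control the joint a.s.\ convergence of the three factors in each summand and to identify the laws of the sub-process limits as (signed) independent copies of $L_1$. This relies crucially on the Markov branching property of the continuous-time embedding: a sub-urn started from a single monochromatic ball is an exact copy of the original urn, and this is what transforms the complicated memory structure of the ERW into the clean product form on the right-hand side of \eqref{eq:fixed-point-X-only}.
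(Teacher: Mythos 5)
Your route is genuinely different from the paper's: you pass to the Athreya--Karlin continuous-time embedding and split at the first ring of the Yule clock, whereas the paper stays in discrete time, splits the recursive tree of the urn at time $1$ into two subtrees, and observes that the pair of subtree sizes $\bD(n)=(D_1(n),D_2(n))$ is itself a classical two-color P\'olya urn (identity replacement, started from $(1,1)$), independent by construction of the two sub-urn processes, with $D_1(n)/n\to V\sim\cU([0,1])$; Theorem~\ref{thm:Janson-dim1} applied inside each subtree then gives the same decomposition. Your identification of the ingredients is essentially sound: the embedded jump chain is the ERW-urn, $\ee^{-s}N^{(i)}(s)\to W^{(i)}$ with independent $\mathrm{Exp}(1)$ limits, so $N^{(1)}(t-T_1)/N(t)\to V=W^{(1)}/(W^{(1)}+W^{(2)})\sim\cU([0,1])$; each sub-urn started from a single ball converges, after normalization by its own size, to a copy of $W_{(1,0)}$ or of $W_{(0,1)}=-W_{(1,0)}$ (Remark~\ref{rk:loiLX}); and the algebraic identity for $S(t)/N(t)^a$ passes to the limit almost surely.

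There is, however, one genuine gap: the assertion that ``by the branching property, $V$, $\xi_p$, $L_1^{(1)}$ and $\widetilde L_1$ are mutually independent.'' The branching property only gives independence \emph{between} the two sub-processes (and of the colour coin $\xi_p$); it says nothing about the dependence, \emph{within} sub-process $i$, between the Yule limit $W^{(i)}$ --- out of which $V$ is built --- and the limit $L^{(i)}$ of the signed composition of that same sub-process. Without an extra argument, $V$ could a priori be correlated with $L_1^{(1)}$ and $\widetilde L_1$, and then the almost sure identity $L_1=V^aL_1^{(1)}+(1-V)^a(2\xi_p-1)\widetilde L_1$ would not yield \eqref{eq:fixed-point-X-only}, which requires the joint law to factorize. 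The missing (and true) ingredient is the independence of the jump chain and the jump times in the embedding: when $n$ balls are alive, the identity of the ball that rings first is uniform among them and independent of the $\mathrm{Exp}(n)$ holding time, so the embedded discrete urn (hence $L^{(i)}$) is measurable with respect to the which-ball-rings choices and the colour coins, while $W^{(i)}$ is measurable with respect to the holding times alone; this gives $W^{(i)}\perp L^{(i)}$, whence the full factorization and the independence of $V$ from $(\xi_p,L_1^{(1)},\widetilde L_1)$. Once you state and use this fact (and note that the convergence \eqref{ASCVGS} applies to each sub-urn since it is again an ERW-urn started from a single ball), your proof is complete; the paper's discrete decomposition sidesteps this issue because the size process $\bD$ is independent of the sub-urns by construction.
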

The proof of  Theorem~\ref{thm:fixed-point-dim1} is detailed in Section~\ref{sec:tree-struct-dim1}. As we shall see in Remark~\ref{rk:loiLX}, one has 
\begin{equation}
    \label{eq:link_L_1_L_q}
    L_q \overset{\cL}= (2\xi_q -1)L_1,
\end{equation}
where as above $\xi_q$ denotes a Bernoulli random variable with parameter $q$, independent of $L_1$.

The connection between ERW and P\'olya-type urns was first used by Baur and Bertoin \cite{BaurBertoin2016} in order to obtain functional convergence for the ERW, thanks to the work of Janson \cite{Janson2004} on generalized P\'olya urns. The originality of our work lies in the use of tree decomposition: this connection between the ERW and urn models is different from what has been done until now. This point of view together with the initial urn connection has enabled us to establish the fixed-point equation~\eqref{eq:fixed-point-X-only}.

Equations such as \eqref{eq:fixed-point-X-only} have already been used to study the limit of generalized P\'olya urn processes with deterministic replacement, see 
\cite{Li1992PTRF, Liu2001,Chauvin-Liu-Pouyanne2014, Chauvin2015}. 
In the same spirit, in our random replacement context, we will deduce various properties of the limit distribution from the fixed-point equation that they satisfy. We mention that there is a wide literature on the study of solutions of general fixed-point equations, see e.g.\ \cite{Durrett-Liggett1983,ABM2012,Iksanov-Meiners2015,MM2017} and the references therein.

The next result, proved in Section~\ref{sec:tree-struct-dim1}, ensures the existence and uniqueness of the solution to \eqref{eq:fixed-point-X-only}, with constraints on the first and second moments (we already know by \cite{Janson2004}, recalled here in Theorem \ref{thm:Janson-dim1}, that $\dE[L_1]=1/\Gamma(1+a)$).
\begin{thm}
\label{thm:dim1_solution_dist_eq}
Let $a\in(\frac{1}{2},1) $.   The random variable $L_1$ is the unique solution of the distributional equation \eqref{eq:fixed-point-X-only} having $\frac{1}{\Gamma(1+a)}$ as expectation and finite second moment.
\end{thm}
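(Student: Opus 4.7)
}

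My plan is a standard contraction argument in the Zolotarev distance $\zeta_2$, applied to the fixed-point map $T$ associated to \eqref{eq:fixed-point-X-only}: to a law $\mu$ on $\dR$ one associates the law $T\mu$ of $V^a X^{(1)}+(2\xi_p-1)(1-V)^a X^{(2)}$, where $X^{(1)},X^{(2)}\sim\mu$ are iid and independent of $(V,\xi_p)$. On the affine slice $\cM$ of laws with mean $1/\Gamma(1+a)$ and finite second moment, the distance $\zeta_2$ is finite and, as I argue below, $T$ is a strict contraction; this forces uniqueness.

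The first step is to show that the second moment of any candidate solution is forced by \eqref{eq:fixed-point-X-only}. Squaring the equation, taking expectations, and using independence together with $\dE[V^{2a}]=\dE[(1-V)^{2a}]=1/(2a+1)$, $\dE[V^a(1-V)^a]=B(a+1,a+1)$ and $\dE[2\xi_p-1]=a$, one obtains a linear equation in $\dE[L^2]$ (whose coefficient $1-2/(2a+1)$ is nonzero for $a>1/2$) with unique solution
\begin{equation*}
\dE[L^2]=\frac{1}{(2a-1)\,\Gamma(2a)}.
\end{equation*}
Consequently two candidate solutions share the same first two moments, and $\zeta_2$ between them is finite.

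The second step is the contraction estimate. For $f$ in the test class $\cF_2$ of $C^1$ functions with $1$-Lipschitz derivative, fixed $(V,\xi_p)$ and a real $y$, the map $x\mapsto f(V^a x+(2\xi_p-1)(1-V)^a y)$ has $V^{2a}$-Lipschitz derivative, and symmetrically in $y$. Interpolating between iid samples from $\mu^{\otimes 2}$ and $\nu^{\otimes 2}$ in two steps, and conditioning on $(V,\xi_p)$ together with the non-swapped coordinate, yields
\begin{equation*}
\zeta_2(T\mu, T\nu)\leq \bigl(\dE[V^{2a}]+\dE[(2\xi_p-1)^2(1-V)^{2a}]\bigr)\,\zeta_2(\mu,\nu)=\frac{2}{2a+1}\,\zeta_2(\mu,\nu).
\end{equation*}
Since $a>1/2$, the prefactor is strictly less than $1$. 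Applied to two fixed points in $\cM$, which are at finite $\zeta_2$-distance by the first step, this forces $\zeta_2=0$, hence equality in law.

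The main obstacle I anticipate is not conceptual but technical: verifying carefully that the rescaled test functions remain in $\cF_2$ in the conditioning step (which holds a.s.\ since $V\in(0,1)$), and ensuring that $L_1$ itself lies in $\cM$, i.e.\ that $\dE[L_1^2]<\infty$ so it can play the role of one of the two fixed points. The latter is a standard consequence of the $L^2$-boundedness of the martingale $S_n/n^a$ in the regime $a>1/2$, and is also implicit in Theorem~\ref{thm:Janson-dim1}.
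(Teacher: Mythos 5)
Your proposal is correct and is essentially the paper's argument: a contraction-method proof on the slice of laws with fixed mean $\frac{1}{\Gamma(1+a)}$ and finite second moment, with contraction factor $\frac{2}{1+2a}<1$ for $a>\frac12$; the paper does exactly this, only measuring distances with the minimal $L^2$ (Wasserstein) metric $d_\cW$ instead of the Zolotarev metric $\zeta_2$ (your $\zeta_2$ route is precisely the Neininger--R\"uschendorf-style alternative that the paper explicitly mentions and sidesteps in favour of a direct Wasserstein computation). Your preliminary step identifying $\dE[L^2]=\frac{1}{(2a-1)\Gamma(2a)}$ is correct but not needed for finiteness of $\zeta_2$, since equal first moments and finite second moments already suffice.
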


Further, in Section~\ref{sec:existence-density-dim1}, we solve a conjecture on the existence of a density of the limit random variable:
 \begin{thm}
 \label{thm:density_dim_1}
    For any $q\in[0,1], \, a\in(\frac{1}{2},1) $, the random variable $L_q$ is absolutely continuous with respect to Lebesgue measure. Its density is a positive, bounded, smooth function on $\dR$.
 \end{thm}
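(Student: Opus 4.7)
The plan is Fourier-analytic, built on the fixed-point equation of Theorem~\ref{thm:fixed-point-dim1}. Conditioning on $V$ and $\xi_p$, the characteristic function $\phi(t):=\dE[\ee^{itL_1}]$ satisfies
\begin{equation*}
\phi(t) = \int_0^1 \phi(tv^a)\bigl[p\,\phi(t(1-v)^a) + (1-p)\,\phi(-t(1-v)^a)\bigr]\,\dd v.
\end{equation*}
Writing $\phi=\alpha+i\beta$ and using $\phi(-s)=\overline{\phi(s)}$, the bracket equals $\alpha(s)+ia\beta(s)$ at $s=t(1-v)^a$, whose modulus is at most $|\phi(s)|$ since $a\in(1/2,1)$. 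Taking absolute values yields the self-convolution inequality
\begin{equation*}
|\phi(t)|\leq \int_0^1 |\phi(tv^a)|\,|\phi(t(1-v)^a)|\,\dd v.
\end{equation*}

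The heart of the proof is to turn this inequality into rapid decay of $|\phi|$ at infinity. A natural route is to iterate the fixed-point equation $N$ times, expressing $L_1$ as a random signed combination $\sum_{\ell=1}^{2^N}\epsilon_\ell W_\ell L_1^{(\ell)}$ indexed by the leaves of a binary tree, with iid leaves distributed as $L_1$, signs $\epsilon_\ell\in\{-1,+1\}$, and positive weights $W_\ell$ given by products of $V_j^a$ and $(1-V_j)^a$ along the root-to-leaf paths. Applying the above inequality recursively gives
\begin{equation*}
|\phi(t)|\leq \dE\Bigl[\prod_{\ell=1}^{2^N}|\phi(tW_\ell)|\Bigr].
\end{equation*}
Concentration estimates on products of iid Beta-type factors, in the spirit of \cite{Chauvin-Liu-Pouyanne2014,Liu2001}, show that with high probability a positive fraction of the $W_\ell$'s stays bounded below on a scale that forces many factors $|\phi(tW_\ell)|$ to be strictly less than $1$ (using that $\phi(t)\neq 1$ for $t\neq 0$ since $L_1$ is non-degenerate). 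Combining this with the trivial bound $|\phi|\leq 1$ and choosing $N$ large enough depending on $k$, one obtains $|\phi(t)|\leq C_k |t|^{-k}$ for every $k\in\dN$, hence $t\mapsto t^k\phi(t)\in L^1(\dR)$ for all $k$.

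Fourier inversion then provides a bounded $C^\infty$ density $f_1$ for $L_1$, and the identity \eqref{eq:link_L_1_L_q} immediately transfers this to $L_q$ via $f_q(x)=q\,f_1(x)+(1-q)\,f_1(-x)$, still bounded and smooth. For strict positivity on $\dR$, observe that the support of $L_1$ is stable under each map $(x,y)\mapsto v^a x+\epsilon(1-v)^a y$ with $v\in(0,1)$ and $\epsilon\in\{-1,+1\}$; since $\dE[L_1]=1/\Gamma(1+a)>0$ the support contains a positive point, and iterating the stability property together with continuity of $f_1$ forces the support to be all of $\dR$. A propagation argument based on the density-level form of \eqref{eq:fixed-point-X-only}---which expresses $f_1(x)$ as an integral with non-negative integrand involving products of $f_1$'s---then rules out any zero of the continuous density $f_1$. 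The main obstacle is the decay step: extracting polynomial decay of $\phi$ from the product representation requires delicate control of the random weights $W_\ell$, and the case $a$ close to $1/2$ is the most restrictive, since the weights are then most concentrated near $0$ or $1$ and the individual factors $|\phi(tW_\ell)|$ come closest to $1$.
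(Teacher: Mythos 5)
Your setup is fine (the conditioned characteristic-function identity and the bound $|p\,\phi(s)+(1-p)\,\phi(-s)|\leq|\phi(s)|$ are exactly right, and the transfer to $L_q$ and the Fourier-inversion endgame are routine), but the proof has a genuine gap at precisely the point that carries the whole theorem: the decay estimate $|\phi(t)|\leq C_k|t|^{-k}$. From the depth-$N$ tree iteration you only get $|\phi(t)|\leq \dE\bigl[\prod_{\ell}|\phi(tW_\ell)|\bigr]$, and to extract any decay you need each useful factor to be bounded by $1-\delta$ for a \emph{fixed} $\delta>0$, uniformly over the random arguments $tW_\ell$. Knowing merely that $|\phi(s)|<1$ for every $s\neq 0$ (which, by the way, does not follow from non-degeneracy alone --- a lattice-supported variable is a counterexample; you need the full-support/non-lattice property that you only invoke later for positivity) gives such a uniform bound only on compact sets away from the origin. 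The arguments $tW_\ell$ live on scales comparable to $|t|$, so without first ruling out that $|\phi(t_n)|\to 1$ along some sequence $t_n\to\infty$, or without a quantitative count of how many leaves have $tW_\ell$ landing in a fixed compact annulus where $|\phi|\leq 1-\delta$, the phrase ``many factors strictly less than $1$'' yields no rate at all. The ``concentration estimates on products of iid Beta-type factors'' that would supply this count (a branching/large-deviation estimate: with $N\asymp\log|t|$, sufficiently many of the $2^N$ weights must fall in a window of width comparable to $1/|t|$, with the count growing faster than any power of $|t|$) are exactly the delicate part, and you acknowledge yourself that this is ``the main obstacle'' without carrying it out. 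As it stands, the central claim is asserted, not proved.

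For comparison, the paper avoids the tree expansion altogether: from the same inequality it first shows $\limsup_{|t|\to\infty}|\phi(t)|\in\{0,1\}$ (Fatou plus the self-bounding structure), then excludes the value $1$ by a contradiction argument using the points $t_1(\varepsilon)<t_2(\varepsilon)$ where $|\phi|=1-\varepsilon$ and products $M_1\cdots M_n$ of iid copies of $M=V^a$ together with $\dE[M^{1/a}]<1$; once $|\phi(t)|\to 0$ at infinity, a single re-insertion into the fixed-point identity gives $|\phi(t)|=O(|t|^{-\rho})$ for any $\rho<1/a$, and iterating the re-insertion bootstraps this to $O(|t|^{-k\rho})$ for every $k$. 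If you want to keep your tree-based route (which is in the spirit of Liu's absolute-continuity arguments for smoothing transforms), you must either prove the leaf-counting estimate explicitly or prepend an argument showing $\limsup_{|t|\to\infty}|\phi(t)|<1$; also move the full-support argument (which you sketch correctly at the end) before its use, since it is what justifies $|\phi(s)|<1$ for all $s\neq 0$.
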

 
Few illustrations of that density function are presented in Figure~\ref{fig:simulations}, Section~\ref{sec:approx-density}.

 We also prove in Section~\ref{sec:moments-dim1} that the random variable $L_1$ has finite moments of all order and that its probability distribution is determined by its moments (see Theorem~\ref{thm:exp-moments}). Another interesting consequence of the fixed-point equation \eqref{eq:fixed-point-X-only} is to give a simple way to compute all moments of $L_1$ by induction.  So far, only the first four moments were known, see \cite{Bercu2018}.

\begin{thm}\label{thm:moments}
    Let $a\in(\frac{1}{2},1) $. The moments of $L_1$ are given by the following recursive equation.
    Let $(m_k)_{k\geq 1}$ be defined by $m_1=1$ and, for $k\geq 2$,
    \begin{equation}\label{eq:recursion_moments} m_{k}=\frac{1}{ka-c_k}\sum_{j=1}^{k-1}c_jm_jm_{k-j},\end{equation}
    where $c_k=1$ for even $k$ and $c_k=a$ for odd $k$.
 Then, for any $k\geq 1$,
    \begin{equation}\label{eq:L_moments}
    \dE[L_1^k]=\frac{(k-1)!}{a\Gamma(ka)}m_k,
    \end{equation}
    and the moment-generating function of $L_1$ is given by, for $t\in\dR$,
    \[
\dE[\ee^{tL1}]=\sum_{k\geq 0}\frac{m_k}{\Gamma(ka+1)}t^{k} .
    \]
\end{thm}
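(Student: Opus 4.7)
The strategy is to turn the fixed-point identity of Theorem~\ref{thm:fixed-point-dim1} into a recurrence by raising both sides to the $k$-th power and taking expectations. Write $\varepsilon := 2\xi_p-1 \in \{-1,+1\}$, so that $\dP(\varepsilon=1)=p$ and
\[
\dE[\varepsilon^m] \;=\; p+(1-p)(-1)^m \;=\; \begin{cases} 1 & \text{if $m$ even,}\\ a & \text{if $m$ odd,}\end{cases}
\]
which is precisely the coefficient $c_m$ in the statement. Since $V \sim \cU[0,1]$ we also have the Beta identity
\[
\dE\bigl[V^{ja}(1-V)^{(k-j)a}\bigr] \;=\; \frac{\Gamma(ja+1)\Gamma((k-j)a+1)}{\Gamma(ka+2)}.
\]
The existence of $\dE[L_1^k]$ for every $k$ will be used and is granted by Theorem~\ref{thm:exp-moments}.

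Next I would expand $L_1^k$ via the binomial theorem using the fixed-point equation, exploit the independence of $V$, $\varepsilon$, $L_1^{(1)}$, $L_1^{(2)}$, and obtain
\[
\dE[L_1^k] \;=\; \sum_{j=0}^{k}\binom{k}{j}\frac{\Gamma(ja+1)\Gamma((k-j)a+1)}{\Gamma(ka+2)}\,c_{k-j}\,\dE[L_1^j]\,\dE[L_1^{k-j}].
\]
I would then isolate the two boundary terms $j=0$ and $j=k$, each of which contributes $\frac{c_k}{ka+1}\dE[L_1^k]$ and $\frac{1}{ka+1}\dE[L_1^k]$ respectively (using $c_0=1$), and transpose them to the left-hand side. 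The prefactor becomes $\frac{ka-c_k}{ka+1}$, whence
\[
\dE[L_1^k]\,\frac{ka-c_k}{ka+1} \;=\; \sum_{j=1}^{k-1}\binom{k}{j}\frac{\Gamma(ja+1)\Gamma((k-j)a+1)}{\Gamma(ka+2)}\,c_{k-j}\,\dE[L_1^j]\,\dE[L_1^{k-j}].
\]

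Now I would substitute the ansatz $\dE[L_1^k] = \frac{(k-1)!}{a\Gamma(ka)}m_k$ on both sides. The key simplification uses $\Gamma(ja+1)=ja\,\Gamma(ja)$, $\Gamma((k-j)a+1)=(k-j)a\,\Gamma((k-j)a)$ and $\binom{k}{j}(j-1)!(k-j-1)! = \frac{k!}{j(k-j)}$: the factors of $a$ and $\Gamma(ja)$, $\Gamma((k-j)a)$ all cancel, leaving the right-hand side equal to $\frac{k!}{\Gamma(ka+2)}\sum_{j=1}^{k-1} c_{k-j} m_j m_{k-j}$. Using $\Gamma(ka+2)=(ka+1)ka\,\Gamma(ka)$ and dividing through, one arrives at
\[
m_k \;=\; \frac{1}{ka-c_k}\sum_{j=1}^{k-1} c_{k-j}\,m_j\,m_{k-j} \;=\; \frac{1}{ka-c_k}\sum_{j=1}^{k-1} c_j\,m_j\,m_{k-j},
\]
the last equality by the substitution $j\mapsto k-j$ in the sum; together with $m_1=1$ (from $\dE[L_1]=\frac{1}{\Gamma(a+1)}$) this gives \eqref{eq:recursion_moments} and \eqref{eq:L_moments}.

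For the moment-generating function I would plug \eqref{eq:L_moments} into the Taylor series:
\[
\dE[\ee^{tL_1}] \;=\; \sum_{k\geq 0} \frac{t^k}{k!}\dE[L_1^k] \;=\; 1 + \sum_{k\geq 1} \frac{t^k}{k!}\frac{(k-1)!}{a\Gamma(ka)}m_k \;=\; \sum_{k\geq 0} \frac{m_k}{\Gamma(ka+1)}t^k,
\]
with the convention $m_0=1$ (consistent with the recursion). Termwise integration is justified on all of $\dR$ by the finiteness of the moment-generating function, established in Theorem~\ref{thm:exp-moments}. The main nuisance in the argument is the bookkeeping in isolating the $j\in\{0,k\}$ terms and tracking the Gamma-ratios: once these cancellations are performed cleanly the recursion falls out.
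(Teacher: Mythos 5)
Your proposal is correct and follows essentially the same route as the paper: expand the fixed-point equation of Theorem~\ref{thm:fixed-point-dim1} binomially, use independence together with $\dE[(2\xi_p-1)^m]=c_m$ and the Beta identity for $\dE[V^{ja}(1-V)^{(k-j)a}]$, isolate the boundary terms $j\in\{0,k\}$ to get the prefactor $\frac{ka-c_k}{ka+1}$, and reparametrize $\dE[L_1^k]=\frac{k!}{\Gamma(ka+1)}m_k$ to obtain the recursion, with the moment-generating function then read off termwise using the finiteness from Theorem~\ref{thm:exp-moments}. The only cosmetic difference is your indexing ($c_{k-j}$ versus the paper's $c_j$), which is equivalent after the substitution $j\mapsto k-j$.
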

This theorem is proved at the end of Section~\ref{sec:moments-dim1}. Using \eqref{eq:link_L_1_L_q}, we immediately deduce the moments of $L_q$: $\dE[L_q^k] = (q+(-1)^k (1-q))\dE[L_1^k]$.
Finally, we prove that the moment-generating function of $L_1$ and $L_1^2$ are both finite everywhere on $\mathbb R$, see Corollaries~\ref{cor:laplaceL} and \ref{cor:laplaceL^2}. 

At the end of the study in dimension $1$, we give in Section~\ref{sec:randomtrees} a series representation \eqref{eq:L-sum_details} of the random variable $L_q$ using classical distributions, which a direct consequence of the interpretation of ERW in terms of random trees (see \cite{Baur2015,Businger2018} for more details on the subject).

\paragraph{Dimension $d\geq 2$}

Section~\ref{S-HD} is devoted to the multidimensional case $d\geq2$, for which we derive similar results as in dimension $1$, up to some peculiarities and subtleties. A great feature of our way of using the link between the elephant random walk and the P\'olya-type urns model in dimension $1$ is that it generalizes nicely to higher dimensions, whereas most work on MERW has had to use stronger tools to break out of dimension 1. We first recall the definition of the multidimensional ERW in Section~\ref{subsec:MERW}, in particular the result \cite{BercuLaulin2019} giving the existence of a limit variable $\bL$ in the superdiffusive regime. 

In Section~\ref{subsec:urn_process_MERW}, similarly to the dimension $1$ case, we associate to the MERW a P\'olya-type urn process with $2d$ different colors (representing the $2d$ possible directions of the MERW).  In the superdiffusive limit, the asymptotic P\'olya urn process obtained by \cite{Janson2004} (recalled here in Theorem~\ref{thm:Janson-dimd}) can be characterized by two different fixed-point systems, given by \eqref{eq:fixed-point-X-color-vector} and \eqref{eq:fixed-point-W-dimd}, analogous to Theorem~\ref{thm:fixed-point-dim1} in dimension $1$, which will both be used in our study. These fixed-point equations were more delicate to obtain than in dimension $1$. It is also important to remark that as soon as the ERW is concerned, we only need a partial information from the urn reformulation, as the MERW has natural dimension $d$, while the urn model has dimension $2d$. The link between the asymptotic $\bW$ of the urn process and the limit MERW $\bL$ is formalized in Section~\ref{subsec:link_urn_MERW}.
 
Existence and uniqueness of the solution of the first fixed-point system~\eqref{eq:fixed-point-X-color-vector} is studied in Section~\ref{sec:fixed-point-dimd}.
In Section~\ref{subsec:density_d}, working with the second fixed-point system~\eqref{eq:fixed-point-W-dimd}, we  give various properties of the limit variable $\bW$ in the urn interpretation, when its support is the full space. In particular we prove existence of a smooth density in Theorem~\ref{thm:density-W-dimd}. We conjecture that our hypothesis on the support is systematically satisfied, and in Section~\ref{sec:support} we show that indeed it holds in dimension $2$ and $3$. We further give a method to check this hypothesis in dimension $d\geq 4$, based on the analysis of certain Krylov spaces.

In Section~\ref{subsec:application_density_MERW} we show how to transfer from the urn limit variable $\bW$ new properties on the limit variable $\bL$ of the MERW, and we deduce the following theorem in dimension $d=2$ and $d=3$.
\begin{thm}\label{cor:densityL-dim3}
We assume that $d=2$ or $d=3$. Let $\frac{1}{2}<a<1$ with $a=\frac{2dp-1}{2d-1}$.
    The limit $\bL$ of a superdiffusive  elephant random walk in dimension $d$ 
has a smooth bounded density on $\dR^d$, and its support is $\dR^d$.
\end{thm}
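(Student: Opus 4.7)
The plan is to lift the question about $\bL$ to the urn limit $\bW$ and then push the density forward via the projection introduced in Section~\ref{subsec:link_urn_MERW}: an explicit surjective linear map $A:\dR^{2d}\to\dR^d$ such that $\bL\overset{\cL}= A\bW$, obtained by identifying the $2d$ urn colors with the signed directions $\pm\be_1,\ldots,\pm\be_d$ and summing to get the net displacement along each axis. Since $A$ has rank $d$, pushing forward a density on $\dR^{2d}$ by $A$ is a standard disintegration, and the properties we want for $\bL$ will come from corresponding properties of $\bW$.

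The first step is to combine Theorem~\ref{thm:density-W-dimd} with the verification of the support hypothesis carried out in Section~\ref{sec:support} for $d\in\{2,3\}$. Together these produce a smooth, positive, bounded density $f_{\bW}$ for the law of $\bW$ on its support. The second step is the disintegration along $A$,
\[
f_{\bL}(y)\;=\;\int_{A^{-1}(\{y\})} f_{\bW}(w)\, \dd\sigma_y(w),
\]
where $\dd\sigma_y$ denotes Lebesgue measure on the $d$-dimensional affine fiber $A^{-1}(\{y\})$. Surjectivity of $A$ gives $\Supp(\bL)=\dR^d$, and since every fiber meets the interior of $\Supp(\bW)$, where $f_{\bW}>0$, one obtains $f_{\bL}(y)>0$ for all $y\in\dR^d$.

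The main obstacle is smoothness and boundedness of $f_{\bL}$: the fibers of $A$ are unbounded, so mere boundedness of $f_{\bW}$ is not enough to guarantee that the fiber integral converges, let alone depends smoothly on $y$. To get around this, I would use finiteness of the exponential moments of $\bW$, the higher-dimensional analogue of Corollary~\ref{cor:laplaceL}, which is expected to follow from the contractive structure of the fixed-point system \eqref{eq:fixed-point-W-dimd} by the same bootstrap argument used in dimension $1$. Finite exponential moments translate into super-polynomial decay of $f_{\bW}$ at infinity and provide an integrable majorant for $f_{\bW}$ and each of its partial derivatives, uniformly for $y$ in any compact set. A standard differentiation-under-the-integral argument then transfers $C^{\infty}$ regularity and the $L^{\infty}$ bound from $f_{\bW}$ to $f_{\bL}$, completing the proof.
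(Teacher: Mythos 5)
Your overall strategy---transfer the density from the urn limit to $\bL$ through the linear relation of Section~\ref{subsec:link_urn_MERW}, after securing the support hypothesis for $d\in\{2,3\}$---is the same as the paper's, but the execution has genuine gaps. First, a bookkeeping problem that matters: the $2d$-dimensional urn limit $\bY$ has \emph{no} density on $\dR^{2d}$, since by \eqref{eq:lienX-W} it is supported in the hyperplane $\{x:\sum_i x_i=0\}$; the object with a density is $\bW=(W_2,\ldots,W_{2d})\in\dR^{2d-1}$ (Theorem~\ref{thm:density-W-dimd}). So your disintegration over $d$-dimensional fibers of a surjective map $A:\dR^{2d}\to\dR^d$ is ill-posed as written; the correct map is the rank-$d$ matrix of size $d\times(2d-1)$ in \eqref{eq:L-W-dimd}, whose fibers are $(d-1)$-dimensional. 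Second, your route to integrability along the fibers does not hold up: (i) exponential moments of $\bW$ do \emph{not} follow ``by the same bootstrap as in dimension $1$'' from \eqref{eq:fixed-point-W-dimd}, precisely because the matrix $B=K^k$ there is not norm-preserving---this is the obstruction the paper flags at the start of Section~\ref{sec:moments-dimd}, which is why exponential moments are obtained from \eqref{eq:fixed-point-X-color-vector} (where $A$ is an isometry, Theorem~\ref{thm:dim_d_moments}) and then transferred to $\bW$ via \eqref{eq:lienX-W}; and (ii) even granting finite exponential moments, these control tail \emph{probabilities}, not pointwise values: they do not by themselves give super-polynomial decay of $f_{\bW}$, let alone of its partial derivatives, so the claimed integrable majorant on the unbounded fibers is unjustified.

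All of this machinery is avoidable, and this is essentially what makes the paper's deduction ``immediate.'' Writing $\bL_{(k)}=M\bW(\be_k)$ with $M$ the $d\times(2d-1)$ matrix of \eqref{eq:L-W-dimd}, which has rank $d$, one has $\varphi_{\bL_{(k)}}(r)=\varphi_{\bW}(M^{T}r)$ with $M^{T}$ injective, hence $\NRM{M^{T}r}\geq c\NRM{r}$ for some $c>0$; the bound \eqref{eq:upper_bound_cf} (valid for every $k\geq1$ once the support hypothesis is checked, as in Propositions~\ref{prop:supportW-dim2} and~\ref{prop:densityW-dim3}) then shows that $\varphi_{\bL_{(k)}}$ and all its polynomial multiples are integrable, so $\bL_{(k)}$ has a smooth bounded density by Fourier inversion---no decay estimate on $f_{\bW}$ is needed. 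The support statement is also direct: for any nonempty open $O\subset\dR^d$, $M^{-1}(O)$ is open and nonempty, so $\dP(\bL_{(k)}\in O)=\dP(\bW\in M^{-1}(O))>0$ since $\Supp(\bW)=\dR^{2d-1}$. Finally, for a general first-step law $q$ one should add the observation that $\bL$ is the discrete mixture \eqref{eq:mixture} of the $\bL_{(k)}$, each of which has these properties by symmetry; your proposal omits this step.
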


Finally, in Section~\ref{sec:moments-dimd} we prove that the limit MERW variable has finite moments of all order, that its probability distribution is determined by its moments, and that in addition the moment-generating function of $\bL$ and $\NRM{\bL}^2$ are globally well defined.

\section{One-dimensional superdiffusive limit using P\'olya-type urns}
\label{S-FPE}

\subsection{Connection between the ERW and P\'olya-type urns}
\label{subsec:connection_ERW_Polya}
We now present the urn approach of the ERW in dimension $1$, but everything can be extended to dimension $d$ to obtain a more general fixed-point equation. The higher dimensional case is studied in full details in Section~\ref{S-HD}.

Let $\PAR{\bU(n)}_{n\geq 0}$ be a discrete-time urn with balls of two colors, say red and blue. The composition of
the urn at time $n\geq 0$ is given by a vector $\bU(n) =(R_n, B_n)^T$, where $R_n$ (resp.\ $B_n$) stands for the number of red (resp.\ blue) balls at time $n$. The initial composition of the urn is $(1,0)$ with probability $q$ or $(0,1)$ with probability $1-q$. The composition of urn then evolves as follows. At any time $n \geq 1$, a ball is drawn uniformly at random, its color is observed, then it is returned to the urn together with a ball of the same color with probability $p$, or with a ball of the other color with probability $1-p$. 

Hence, the replacement matrix $A$ is defined by
\begin{equation}
\label{def:A}
    A=\left\{\begin{array}{ll}
    \begin{pmatrix} 1 & 0 \\ 0 & 1 \end{pmatrix} & \textnormal{with probability } p,\medskip\\
     \begin{pmatrix} 0 & 1 \\ 1 & 0 \end{pmatrix} & \textnormal{with probability } 1-p.
    \end{array}\right.
\end{equation}
The composition of the urn at time $n+1$ satisfies
\begin{equation*}
     \bU({n+1}) = \bU(n) + A_{n+1} \begin{pmatrix}\varepsilon_{n+1} \\ 1- \varepsilon_{n+1}\end{pmatrix},
\end{equation*} 
where $\varepsilon_{n+1}$ is equal to $1$ if a red ball is picked and to 0 otherwise, and $(A_n)_{n\geq 1}$ is a sequence of i.i.d.\ random matrices with same distribution as $A$.

The behavior of the process is deeply linked to the spectral decomposition of the mean replacement matrix $\dE[A]$, given by 
\begin{equation*}
   \dE[A] = \begin{pmatrix} p & 1-p \\ 1-p & p \end{pmatrix}.
\end{equation*}
The eigenvalues of $\dE[A]$ are $\lambda_1 = 1$ and $\lambda_2 = 2p-1$,
and the corresponding unit eigenvectors are
\begin{equation*}
\bv_1 = \frac{1}{2}\begin{pmatrix}
    1\\1
\end{pmatrix}\quad\text{and} \quad \bv_2 = \frac{1}{2}\begin{pmatrix}
    1\\-1
\end{pmatrix}.
\end{equation*} 
It is well known, see \cite{Chauvin2011,Flajolet2006,Flajolet2005,Janson2004}, 
that the asymptotics of the urn depends on the ratio $a=\lambda_2/\lambda_1 =2p-1 $ with respect to $1/2$. In particular, applied to our special case we have the following large urn convergence result \cite{Janson2004,BaurBertoin2016}.  
\begin{thm}[\cite{Janson2004,BaurBertoin2016}]
\label{thm:Janson-dim1}
Let $U(0) = \alpha=(\alpha_1,\alpha_2)\in\BRA{0,1}^2$ be such that $\alpha_1+\alpha_2=1$. If $a=2p-1 >1/2$, then it holds
\begin{equation*}
    \lim_{n\to\infty} \frac{\bU(n) - n\bv_1}{n^a } = W_{\alpha}\bv_2 \quad\text{a.s.},
\end{equation*}
where $W_{\alpha}$ is a non-degenerate random variable such that
\begin{equation*}
    \dE[W_{\alpha}]=\ds{\frac{\alpha_1-\alpha_2}{\Gamma(1+a)}}\quad \text{and} \quad \dE[W_{\alpha}^2]=\ds{\frac{1}{(2a-1)\Gamma(2a)}}.
\end{equation*}
In particular, 
\begin{equation*}
    \lim_{n\to\infty} \frac{R_n - B_n}{n^a} = W_{\alpha} \quad\text{a.s.}
\end{equation*}
\end{thm}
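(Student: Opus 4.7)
The plan is a classical $L^2$ martingale argument applied to the scalar projection $D_n := R_n - B_n$ of $\bU(n)$ onto the slow eigendirection $\bv_2$ of $\dE[A]$.

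\emph{Step 1 (martingale identification).} Since each draw returns exactly one additional ball, $R_n+B_n = n+1$, and the law of the replacement matrix $A$ implies that, conditionally on $\cF_n := \sigma(\bU(0),\dots,\bU(n))$, the increment $D_{n+1}-D_n$ lies in $\BRA{-1,+1}$ with
$$\dE\SBRA{D_{n+1}\mid\cF_n} = D_n + (2p-1)\frac{D_n}{n+1} = \frac{n+1+a}{n+1}D_n.$$
I therefore introduce the Gamma-normalised process
$$M_n := \frac{\Gamma(n+1)}{\Gamma(n+1+a)}D_n,$$
which is an $\cF_n$-martingale with $M_0 = (\alpha_1-\alpha_2)/\Gamma(1+a)$ and, by Stirling, $M_n \sim D_n/n^a$.

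\emph{Step 2 ($L^2$-boundedness).} Since $(D_{n+1}-D_n)^2 = 1$,
$$\dE\SBRA{D_{n+1}^2\mid\cF_n} = \frac{n+1+2a}{n+1}D_n^2 + 1.$$
The resulting affine recurrence on $\dE[D_n^2]$ has asymptotic growth $\dE[D_n^2] = \Theta(n^{2a})$ as soon as $2a > 1$, which gives $\sup_n \dE[M_n^2] < \infty$.

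\emph{Step 3 (convergence and back-projection).} By the $L^2$ martingale convergence theorem, $M_n$ converges a.s.\ and in $L^2$ to an integrable $W_\alpha$, hence $D_n/n^a \to W_\alpha$ a.s. Using $R_n + B_n = n+1$, $R_n - B_n = D_n$ and the explicit expressions of $\bv_1, \bv_2$, one obtains the identity $\bU(n) - n\bv_1 = D_n\, \bv_2 + \bv_1$. Dividing by $n^a$ and using $a > 0$, the constant term $\bv_1/n^a$ vanishes and the vector convergence $(\bU(n)-n\bv_1)/n^a \to W_\alpha \bv_2$ follows.

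\emph{Step 4 (moments and non-degeneracy).} Since convergence holds in $L^1$, $\dE[W_\alpha] = M_0 = (\alpha_1-\alpha_2)/\Gamma(1+a)$. Solving the recurrence for $\dE[D_n^2]$ in closed form and computing $\lim \dE[M_n^2]$ via Stirling yields the announced value $\dE[W_\alpha^2] = 1/((2a-1)\Gamma(2a))$. Non-degeneracy then reduces to the strict positivity of the variance, which can be verified on these closed-form expressions (using the Gamma duplication formula) for all $a\in(1/2,1)$.

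The key obstacle is Step 2: the $L^2$-boundedness depends crucially on the condition $2a > 1$. For $a \leq 1/2$ the same martingale is not $L^2$-bounded and its a.s.\ limit vanishes; the correct scaling then yields a Gaussian central limit theorem instead, which is precisely the diffusive/critical versus superdiffusive dichotomy recalled in the introduction. Everything else is formal manipulation of Gamma functions.
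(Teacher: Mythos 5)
Your proposal is correct, but it follows a genuinely different route from the paper: the paper does not prove Theorem~\ref{thm:Janson-dim1} at all, it quotes it from Janson's general theory of large P\'olya urns (established there via embedding into continuous-time multitype branching processes) and from Baur--Bertoin. Your argument is instead a direct, self-contained $L^2$ computation on the scalar projection $D_n=R_n-B_n$, i.e.\ essentially the martingale approach used by Bercu for the ERW itself: $M_n=\frac{\Gamma(n+1)}{\Gamma(n+1+a)}D_n$ is a martingale, it is $L^2$-bounded exactly when $2a>1$, and the vector statement follows from the exact identity $\bU(n)-n\bv_1=D_n\bv_2+\bv_1$ together with $\bv_1/n^a\to0$. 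This buys an elementary proof with explicit moments; what the citation route buys is generality, since for the $2d$-colour urn of Section~3 the analogous by-hand computation is heavier and the paper needs Janson's theorem there anyway. Two points you leave as asides deserve to be written out: (i) the value $\dE[W_\alpha^2]=\frac{1}{(2a-1)\Gamma(2a)}$ comes from the closed-form solution of your recursion through the Beta-integral identity $\sum_{k\geq 0}\Gamma(k+1)/\Gamma(k+1+2a)=\frac{1}{(2a-1)\Gamma(2a)}$, valid precisely for $2a>1$, so this identity is the whole content of that step; (ii) non-degeneracy reduces to the strict inequality $\Gamma(1+a)^2>(2a-1)\Gamma(2a)$ for $a\in(\frac12,1)$, which does hold (with equality only at $a=1$, the degenerate case $p=1$) but requires the short Gamma-function argument you only allude to.
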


The connection to the ERW model is straightforward.  Let $(S_n)_{n\geq 0}$ denote the ERW started from $S_0 = 0$ and such that $S_1 = R_1 - B_1$; then for every $n\geq 1$,
\begin{equation*}
   S_n \overset{\cL}= R_n - B_n.
\end{equation*}
 In other words, the difference between the number of red and blue balls in the urn behaves like an ERW with first step equal to $R_1 - B_1$ (the red balls corresponding to steps to the right, and the blue ball corresponding to left jumps).
 Hence, if $W_{\alpha}$ denotes the limiting random variable for the urn process started from the composition vector $\alpha$, we have the equality in distribution
 \begin{equation}
 \label{eq:L-urn}
   L_q \overset{\cL}= \xi_q W_{(1,0)} + (1-\xi_q) W_{(0,1)},
\end{equation}
where $\xi_q$ is a Bernoulli variable with parameter $q$, and $\xi_q$, $W_{(1,0)}$ and $W_{(0,1)}$ are mutually independent. In particular, when the elephant starts with a right jump, we have $L_1\overset{\cL}{=}W_{(1,0)}$.
This explains how the asymptotic behavior of the ERW is determined by the spectral decomposition of the (mean)\ replacement matrix of the corresponding urn. 

Finally, we briefly recall a nice result on $L_q$, obtained by using a connection between the ERW and random recursive trees on which a Bernoulli bond percolation has been performed (see \cite{Kursten2016,Businger2018} for more details on the subject). It turns out that  results on cluster sizes after percolation, e.g.\ from \cite[Lem.~3.3]{Baur2015}, can lead to the following decomposition of the random variable $L_q$:
\begin{equation}
\label{eq:L-sum}
   L_q=
    C_1Z_1^{(q)} + \sum_{j=2}^{+\infty} C_j Z_j \quad\text{a.s.},
\end{equation}
where $Z_1^{(q)}$ has Rademacher distribution with parameter $q$ (denoted by $\cR(q)$), $(Z_j)_{j\geq2}$ are i.i.d.\ with $\cR(1/2)$ distribution, $C_1$ has a Mittag-Leffler distribution with parameter $a$,
and $(C_j)_{j\geq 2}$ is a sequence of dependent, almost surely positive random variables (their explicit expression is given in Proposition~\ref{prop:rec-trees-dim1}, Equation~\eqref{eq:L-sum_details}). This decomposition ensures that the random variable $L_q$ has a continuous distribution (no atoms). Although we find this expression very elegant, we will not use it because we have not been able to prove the existence of a density on $\dR$ for the distribution of $L_q$ from it. We will instead focus on the urn-connection and the underlying fixed-point equations to obtain properties of the superdiffusive limit. 

\subsection{Tree-structure and fixed-point equations}\label{sec:tree-struct-dim1}

We will prove that both random variables $W_{(1,0)}$ and $W_{(0,1)}$, introduced in Theorem~\ref{thm:Janson-dim1}, admit a density, by showing that their characteristic functions are integrable. Our method is strongly inspired by the work of Chauvin et al.~\cite{Chauvin2015} for urn processes with deterministic replacement.

The tree structure of the urn process is as follows: a sequence of trees $(T_n)_{n\geq 0}$ grows at each drawing from the urn. At time $0$, there is a red ball in the urn with probability $q$, or a blue ball with probability $1-q$. Then the tree starts from a red node with probability $q$ or a blue node with probability $1-q$:
\begin{center}
\begin{tikzpicture}
\tikzstyle{bleu}=[circle,fill=blue!70]
\tikzstyle{rouge}=[circle,fill=red!70]
\node [rouge, label =left: \footnotesize with probability $q$] at (0,0) {};
\node [bleu, label =right: \footnotesize with probability $1-q$] at (2,0) {};
\end{tikzpicture}
\end{center}
At time $n$, each leaf in the tree represents a ball in the urn. When a leaf is chosen (i.e., a ball is drawn), it becomes an internal node and gives birth to $2$ balls of the same color with probability $p$, or $1$ ball of each color with probability $1-p$. Due to the urn process correspondence, leaves are uniformly chosen among the leaves of the tree

At time $1$, the tree is decomposed into two subtrees (or a forest with two trees); 
the three possible decompositions of the tree into two subtrees at time $1$ are illustrated on Figure~\ref{fig:subtrees-t1}.
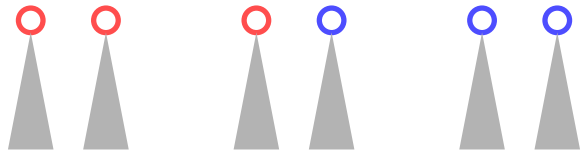
\begin{figure}[ht!]
\centering
\begin{tikzpicture}
\tikzstyle{sousarbre}=[isosceles triangle,fill=black!30,isosceles triangle apex angle=22,rotate=90,scale=1.7]
\tikzstyle{bleu}=[circle,fill=blue!70]
\tikzstyle{rouge}=[circle,fill=red!70]
\tikzstyle{rougechoisie}=[circle,draw=red!70,line width=2pt,scale=0.9]
\tikzstyle{bleuchoisie}=[circle,draw=blue!70,line width=2pt,scale=0.9]
\node[rougechoisie] at (0,0) {}
    child { node[sousarbre, draw=none] {} edge from parent[draw=none]};
\node [rougechoisie] at (1,0) {}
    child { node[sousarbre, draw=none] {} edge from parent[draw=none]};
\node [rougechoisie] at (3,0) {}
    child { node[sousarbre, draw=none] {} edge from parent[draw=none]};
\node [bleuchoisie] at (4,0) {}
    child { node[sousarbre, draw=none] {} edge from parent[draw=none]};
\node [bleuchoisie] at (6,0) {}
    child { node[sousarbre, draw=none] {} edge from parent[draw=none]};
\node [bleuchoisie] at (7,0) {}
    child { node[sousarbre, draw=none] {} edge from parent[draw=none]};
\end{tikzpicture}
\caption{The three possible subtrees at time $n=1$.}\label{fig:subtrees-t1}
\end{figure}
Two possibilities for the tree decomposition at time $4$ are displayed on Figure~\ref{fig:tree-time4} below, depending on the first ball added to the urn (depending on the color of the root node). The balls that have been picked are represented by the empty discs, while the balls that can be picked at next times are plain discs (as already said, they correspond to the leaves of the tree). On Figure~\ref{fig:tree-time4}, after time $4$, there are $5$ balls in the urn. 
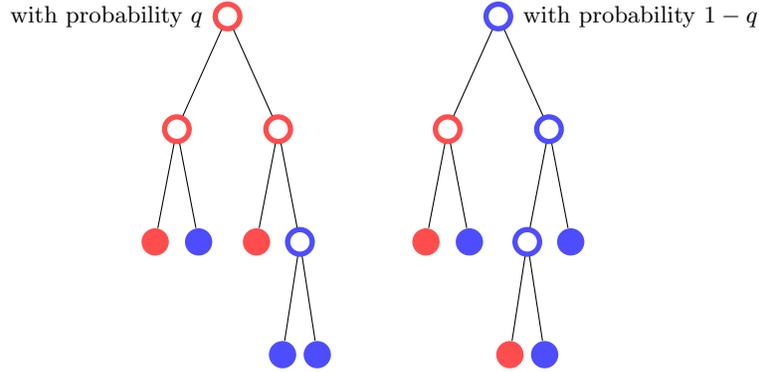
\begin{figure}[ht!]
\centering
\begin{tikzpicture}
[level 1/.style={sibling distance=3.5em},
level 2/.style={sibling distance=1.5em},
level 3/.style={sibling distance=1.2em}]
\tikzstyle{bleu}=[circle,fill=blue!70]
\tikzstyle{rouge}=[circle,fill=red!70]
\tikzstyle{rougechoisie}=[circle,draw=red!70,line width=2pt,scale=0.9]
\tikzstyle{bleuchoisie}=[circle,draw=blue!70,line width=2pt,scale=0.9]
\node [rougechoisie, label =left: \footnotesize with probability $q$] at (0,0) {}
    child { node[rougechoisie] {}
        child { node[rouge] {}}
        child { node[bleu] {}}
        }
    child { node[rougechoisie] {}
        child { node[rouge] {}}
        child { node[bleuchoisie] {}
            child { node[bleu] {}}
            child { node[bleu] {}}
        }
    }
    ;
\node [bleuchoisie, label =right: \footnotesize with probability $1-q$] at (3.6,0) {}
    child { node[rougechoisie] {}
        child { node[rouge] {}}
        child { node[bleu] {}}
        }
    child { node[bleuchoisie] {}
        child { node[bleuchoisie] {}
            child { node[rouge] {}}
            child { node[bleu] {}}
            }
        child { node[bleu] {}}
    }
    ;
\end{tikzpicture}
\caption{Two examples of tree at time $4$.}\label{fig:tree-time4}
\end{figure}

For any $n\geq1$, denote by $D_1(n)$ the number of leaves  at time $n$ of the first subtree (starting on the left node at time $1$), and similarly $D_2(n)$ the number of leaves  at time $n$ of the second subtree (starting on the right node at time $1$). At time $n$, we note that $D_1(n)+D_2(n)=n+1$, and the number of drawings in the $k$-th subtree is $D_k(n)-1$. These numbers represent the time inside each subtree. 

We now describe the evolution of $D(n)$. Remember that the balls of the whole urn are uniformly drawn at any time and notice that at each drawing in the $k$-th subtree, $D_k(n)$ increases by $1$.
The random vector $\bD(n)=(D_1(n), D_2(n))$ can indeed be seen as a classical $2$-color  urn process (each color modeling each subtree),
with $I_2$ as (deterministic) replacement matrix and $(1,1)$ as initial composition vector.

We recall that  $\bU_{(1,0)}$ is the urn process starting from a red ball. Then the tree will start from a red node. The number of red/blue leaves in the whole tree is equal to the sum of the number of red/blue leaves in the first subtree and the number of red/blue leaves in the second subtree.
We also easily observe that the first subtree of $\bU_{(1,0)}$ always starts from a red node, and the second subtree starts from a red node with probability $p$ and a blue node with probability $1-p$.

Gathering these arguments, the distribution of the urn process $\bU_{(1,0)}$ can be described the following way: consider simultaneously
\begin{itemize}
     \item an original $2$-color urn process $\bD = (D_1, D_2)$ having the identity matrix as (deterministic) replacement matrix and $(1,1)$ as initial condition;
     \item for any $k\in\{1,2\}$, an urn process $\bU^{(k)}_{(1,0)}$ having $A$ in \eqref{def:A} as (random) replacement matrix and $(1,0)$ as initial condition;
     \item an urn process $\bU^{(2)}_{(0,1)}$ having $A$ in \eqref{def:A} as (random) replacement matrix and $(0,1)$ as initial condition;
     \item a Bernoulli random variable $\xi_p$ with parameter $p$ (modeling the first replacement);
\end{itemize}  
 all these processes being independent of each other. Then, the process $\bU_{(1,0)}=(\bU_{(1,0)}(n))_{n\geq 1}$ satisfies the distributional equality, for all $n\geq 1$:
 \begin{equation*}
     \bU_{(1,0)}(n)\overset{\cL}= \bU^{(1)}_{(1,0)}\PAR{D_1(n)-1} + \xi_p \bU^{(2)}_{(1,0)}\PAR{D_2(n)-1) +(1-\xi_p} \bU^{(2)}_{(0,1)}\PAR{D_2(n)-1}.
 \end{equation*}
 A similar equation can be obtained for $\bU_{(0,1)}$.

 Since $\bD = (D_1, D_2)$ has the same behavior as a classical urn process, we immediately have
 \begin{equation}\label{eq:conv-D}
     \lim_{n\to\infty} n^{-1}\bD(n) = \bZ \quad\text{a.s.},
\end{equation}
where $\bZ=(V,1-V)$ has Dirichlet$(1,1)$ distribution, with $V\sim\cU([0,1])$ a uniform random variable on $[0,1]$ (see \cite{polya1930} or \cite[Thm~2.1]{freedman1965}).

Using the above considerations together with Theorem~\ref{thm:Janson-dim1}, we deduce the following result.

\begin{thm}
\label{thm:system_distrib}
Assume $a\in\PAR{\frac{1}{2},1}$. Let $W_{(1,0)}$ and $W_{(0,1)}$ be the elementary limit distributions of a large two-color Pólya urn process with random replacement matrix $A$ given by \eqref{def:A} and ratio $a >1/2$, introduced in Theorem~\ref{thm:Janson-dim1}. Then $W_{(1,0)}$ and $W_{(0,1)}$ satisfy the distributional equation system
 \begin{equation}
 \label{eq:fixed-point-XY}
     \begin{cases}
     W_{(1,0)} \overset{\cL}= V^a  W_{(1,0)}^{(1)} + \xi_p (1-V)^a  W_{(1,0)}^{(2)}  +(1-\xi_p) (1-V)^a  W_{(0,1)}^{(2)},  \\[1em]
    W_{(0,1)} \overset{\cL}= V^a  W_{(0,1)}^{(1)} + \xi_p (1-V)^a  W_{(0,1)}^{(2)}  +(1-\xi_p )(1-V)^a  W_{(1,0)}^{(2)},
     \end{cases}
 \end{equation}
 where
 \begin{itemize}
     \item $V$ is a uniformly distributed random variable on $[0,1]$;
     \item $\xi_p$ is a Bernoulli distributed random variable with parameter $p$;
     \item the $W_{(1,0)}^{(k)}$ and the $W_{(0,1)}^{(k)}$ are respective copies of $W_{(1,0)}$ and $W_{(0,1)}$; 
      \item all the variables are independent. 
 \end{itemize}
 \end{thm}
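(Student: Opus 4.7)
The plan is to pass to the limit in the explicit finite-$n$ decomposition already prepared just before the theorem. For the urn $\bU_{(1,0)}$, the unique red ball at time $0$ must be drawn at time $1$ and is replaced according to $A_1$: the first child is always red, while the second child is red if $A_1=I_2$ (probability $p$) and blue if $A_1$ is the swap matrix (probability $1-p$). Set $\xi_p:=\ind\{A_1=I_2\}\sim\mathrm{Bernoulli}(p)$. From time $2$ onwards each draw falls in exactly one of the two subtrees rooted at these children, and, conditionally on the subtree assignments, each subtree evolves as an independent copy of the full urn process driven by its own independent sequence of replacement matrices. This gives the finite-$n$ identity in law
\begin{equation*}
\bU_{(1,0)}(n)\overset{\cL}{=} \bU^{(1)}_{(1,0)}\PAR{D_1(n)-1} + \xi_p\,\bU^{(2)}_{(1,0)}\PAR{D_2(n)-1} + (1-\xi_p)\,\bU^{(2)}_{(0,1)}\PAR{D_2(n)-1},
\end{equation*}
in which the three sub-urn processes, $\xi_p$, and $\bD(n)=(D_1(n),D_2(n))$ are mutually independent.

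\textbf{Leaf-count urn.} Since each drawing adds exactly one net leaf to the subtree it lands in, and draws are uniform over all current leaves, the process $\bD$ is precisely a classical two-color Pólya urn with deterministic replacement matrix $I_2$ and initial composition $(1,1)$. Hence \eqref{eq:conv-D} yields $n^{-1}\bD(n)\to(V,1-V)$ almost surely with $V\sim\cU([0,1])$.

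\textbf{Passage to the limit.} Projecting the finite-$n$ decomposition on $\bv_2$ (i.e.\ forming $R_n-B_n$) and dividing by $n^a$, each of the three terms factors as $(D_k(n)/n)^a$ times a rescaled red-minus-blue difference of the corresponding independent sub-urn evaluated at time $D_k(n)-1$. Because $V\in(0,1)$ almost surely, both $D_1(n)$ and $D_2(n)$ diverge almost surely; therefore Theorem~\ref{thm:Janson-dim1} applied to each independent sub-urn yields almost sure convergence of these rescaled differences along the random times $D_k(n)-1$ to independent copies $W^{(1)}_{(1,0)}$, $W^{(2)}_{(1,0)}$, $W^{(2)}_{(0,1)}$. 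Combined with $(D_k(n)/n)^a\to V^a,(1-V)^a$ a.s., we obtain the first equation of \eqref{eq:fixed-point-XY}. The second equation is proved by the same argument starting from a blue root, with the two colors exchanging roles: the first subtree of $\bU_{(0,1)}$ is a $(0,1)$-urn, and its second subtree is a $(0,1)$-urn with probability $p$ and a $(1,0)$-urn with probability $1-p$.

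\textbf{Main obstacle.} The only non-routine point is to set up the joint independence required at finite $n$ between $\bD$, $\xi_p$, and the two sub-urns, and then to justify the passage to the limit along the random indices $D_k(n)-1$. The independence structure is obtained by constructing everything on a product probability space where the uniform leaf-choice randomness (driving $\bD$) and the replacement-matrix sequences attached to each subtree (driving its color dynamics) live on independent coordinates; the random-time limit is then immediate from the a.s.\ convergence in Theorem~\ref{thm:Janson-dim1} together with the a.s.\ divergence of $D_k(n)$.
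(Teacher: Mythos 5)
Your proposal is correct and follows essentially the same route as the paper: the tree decomposition at time $1$ with $\xi_p$ encoding the first replacement, the identification of the leaf-count process $\bD$ as a classical P\'olya urn with identity replacement so that $n^{-1}\bD(n)\to(V,1-V)$ a.s., and the passage to the limit in the finite-$n$ distributional identity via Theorem~\ref{thm:Janson-dim1} applied to the independent sub-urns along the a.s.\ divergent random times $D_k(n)-1$. If anything, you spell out the random-time limit and the product-space independence construction more explicitly than the paper does.
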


\begin{rk}\label{rk:loiLX}
The variables $W_{(1,0)}$ and $W_{(0,1)}$ satisfy $W_{(1,0)} \overset{a.s.}= -W_{(0,1)}$. Indeed, by Theorem~\ref{thm:Janson-dim1}, the urn process started from $(1,0)$ satisfies
\begin{equation*}
    \lim_{n\to\infty} \frac{R_n - B_n}{n^a} = W_{(1,0)} \quad\text{and}\quad
    \lim_{n\to\infty} \frac{B_n - R_n}{n^a} = -W_{(1,0)}\quad\text{a.s.}
\end{equation*}
The two colors of the urn are exchangeable, hence the random variable $B_n-R_n$ can be seen as $R_n-B_n$ when started from $(0,1)$, and therefore satisfies 
     \begin{equation*}  
    \lim_{n\to\infty} \frac{B_n - R_n}{n^a} =W_{(0,1)}
    \quad \text{a.s.}
\end{equation*}
As a consequence and using \eqref{eq:L-urn}, we have $L_q \overset{\cL}= (2\xi_q -1)W_{(1,0)}$, and in particular $W_{(1,0)} \overset{\cL}=L_1$.
\end{rk}

Due to the above remark, we may only focus our attention on $W_{(1,0)}$. In particular, the system \eqref{eq:fixed-point-XY} can be reduced to a single equation, as summarized in Theorem~\ref{thm:fixed-point-dim1}. Theorem~\ref{thm:fixed-point-dim1} is then proved.

\medskip
We now give the proof of Theorem~\ref{thm:dim1_solution_dist_eq} on existence and uniqueness of the solution with a finite second-order moment to the fixed-point Equation~\eqref{eq:fixed-point-X-only} when the first moment is fixed.

\begin{proof}[Proof of Theorem~\ref{thm:dim1_solution_dist_eq}]
This proof is greatly inspired by \cite[Lem.~6 and Thm~7]{Chauvin2015}. 
As pointed out in \cite{Chauvin2015}, it could be deduced from the general result in Neininger-R\"uschendorf \cite{NeRu-04}, but we choose to give instead a direct proof via the contraction method.

Let us fix $m\in \dR$, and define $\cP_2(m)$ be the space of probability distributions on $\dR$ that have $m$ as expectation and finite second moment. We consider the Wasserstein distance  $d_\cW$ on $\cP_2(m)$. For a brief overview on this distance, we refer the reader to \cite[Sec.~4.1]{Chauvin2015}, and we recall that 
\begin{equation*}
    d_\cW(\mu,\nu) = \min_{(X,Y)} \sqrt{\dE[(X-Y)^2]},
\end{equation*}
for $(X,Y)$ a random vector with marginal distributions $(\mu,\nu)$. We then introduce
\begin{align*}
    H :\  \cP_2(m) &\longrightarrow \cP_2(m) \\
     \mu &\longmapsto \cL\PAR{V^aW^{(1)}+(2\xi_p-1)(1-V)^aW^{(2)}},
\end{align*}
where $W^{(1)},W^{(2)}$ have distribution $\mu$, $V$ is uniform on $(0,1)$, $\xi_p$ is Bernoulli $\cB(p)$ and all the random variables are independent from each other.

For $\mu\in\cP_2(m)$, it is immediate that $\int xH(\mu)(\dd x) = m$. We want to show that $H$ is Lipschitz-continuous for the Wasserstein distance. Let $\mu_1,\mu_2\in\cP_2(m)$, $(W_1^{(1)},W_2^{(1)})$ and $(W_1^{(2)},W_2^{(2)})$ two independent couples with marginal distributions $(\mu_1,\mu_2)$. Then 
\begin{align*}
d_\cW(H\mu_1,H\mu_2)^2 & \leq \dE\SBRA{\PAR{V^a(W_1^{(1)}-W_2^{(1)})+(2\xi_p-1)(1-V)^a(W_1^{(2)}-W_2^{(2)})}^2} \\
& \leq \dE[V^{2a}]\dV\SBRA{W_1^{(1)}-W_2^{(1)}}+\dE[(1-V)^{2a}]\dV\SBRA{W_1^{(2)}-W_2^{(2)}} \\
& \leq \frac{2}{1+2a} \dE\SBRA{\PAR{W_1^{(1)}-W_2^{(1)}}^2}.
\end{align*}
Thus taking the minimum over all couples $(W_1^{(1)},W_2^{(1)})$ with marginal distributions $(\mu_1,\mu_2)$, we deduce 
\begin{equation*}
d_\cW(H\mu_1,H\mu_2)^2  \leq  \frac{2}{1+2a} d_\cW(\mu_1,\mu_2)^2.
\end{equation*}
Hence $H$ is $\sqrt{\frac{2}{1+2a}}$-Lipschitz, which implies that $H$ is a contraction ($a>1/2$) and that it has unique fixed point in $\cP_2(m)$. Theorem~\ref{thm:dim1_solution_dist_eq} is written in the specific case $m=\frac{1}{\Gamma(1+a)}$.
\end{proof}

\begin{rk}
Equation \eqref{eq:fixed-point-X-only} turns out to be a particular case of fixed point equations studied in \cite{Iksanov-Meiners2015}. More specifically, in the notation of \cite{Iksanov-Meiners2015}, the dimension should be taken to $1$, $N=2$, $T_1=V^{a}\in(0,1)$ and $T_2=(2\xi_p-1)(1-V)^{a}\in(-1,1)$, with $V$ a uniform random variable on $(0,1)$ and $\xi_p$ a Bernoulli variable with parameter $p$, independent of $V$. Since $\dE[T_1^\alpha+|T_2|^\alpha]=2/(1+\alpha a)$, Assumptions (A1)--(A4) of \cite{Iksanov-Meiners2015} are satisfied for $\alpha=1/a\in(1,2)$. 
    
    By \cite[Eq.~(2.31)]{Iksanov-Meiners2015}, we deduce that there exists $c\in\dR$ such that 
    \begin{equation*}
     L_1\overset{\cL}{=}c Z+\mathtt{W}^{a}Y_{1/a},
    \end{equation*}
    where \begin{itemize}
    \item $Z$ is the limit value of a martingale, a.s.\ and in $L^{\beta}$, for $1<\beta<1/a$;
    \item $\mathtt{W}$ is a special endogenous non-negative solution to the tilted equation
    \begin{equation}\label{eq:tilted}
    \mathtt{W}\overset{\cL}{=}V \mathtt{W}^{(1)}+(1-V) \mathtt{W}^{(2)}, 
    \end{equation}
    where $(\mathtt{W}^{(1)},\mathtt{W}^{(2)})$ are independent copies of $\mathtt{W}$, and independent of $V$;
    \item $Y_{1/a}$ is a strictly $1/a$-stable random variable independent of $Z$ and $\mathtt{W}$. 
    \end{itemize}
  We notice that if $\mathtt{W}$ is a solution to \eqref{eq:tilted}, then
    \begin{equation*}
    \dE[\mathtt{W}^2]=2\dE[V^2]\dE[\mathtt{W}^2]+2\dE[V(1-V)]\dE[\mathtt{W}]^2\\
    =\frac{2}{3}\dE[\mathtt{W}^2]+\frac{1}{3}\dE[\mathtt{W}]^2.
    \end{equation*}
    It implies $\dE[\mathtt{W}^2]=\dE[\mathtt{W}]^2$, and we deduce that $\mathtt{W}$ is a constant.

    Recall that $\alpha$-stable distributions don't have second-order moments, unless $\alpha=2$. Under this assumption, we have $\mathtt{W}=0$ since $1/2<a<1$. We thus deduce that up to a constant, $Z$ has the same distribution than $L_1$.
    \end{rk}

\subsection{Existence of a density}\label{sec:existence-density-dim1}

In this section we will prove Theorem~\ref{thm:density_dim_1}. Let $W:=W_{(1,0)}$ be the first elementary limit distribution, as defined in Theorem~\ref{thm:Janson-dim1}, and $\varphi_W$ be its Fourier transform, defined for $t\in\dR$ by
 \begin{equation*}
     \varphi_W(t) := \dE\SBRA{\ee^{itW}}.
 \end{equation*}
We recall that $W\overset{\cL}{=}L_1$ satisfies the fixed-point Equation~\eqref{eq:fixed-point-X-only}. 
 \begin{thm}
 \label{thm:fct-carac-X}
     Let $W$ be the first elementary limit distribution of a large two-color Pólya urn process with random replacement matrix $A$ and ratio $a\in\PAR{1/2,1}$. Then 
     \begin{itemize}
         \item the support of $W$ is the whole real line $\dR$;
         \item for any $\rho\in\PAR{0,\frac{1}{a}}$, and any $k\geq 1$, there exists a constant $C>0$ such that for any $t\in\dR^*$, 
         \begin{equation}
        \label{eq:upper_bound_Fourier}
             \ABS{\varphi_X(t)} \leq \frac{C}{\ABS{t}^{k\rho}}.
         \end{equation} 
     \end{itemize}
 \end{thm}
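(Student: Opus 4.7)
The closed support $S\subseteq\dR$ of $W$ is stable, by \eqref{eq:fixed-point-X-only}, under the maps $(w_1,w_2)\mapsto v^a w_1+\epsilon(1-v)^a w_2$ for all $v\in[0,1]$, $\epsilon\in\{-1,+1\}$ and $w_1,w_2\in S$. Fixing $w\in S$ and setting $w_1=w_2=w$: as $v$ sweeps $[0,1]$, the coefficient $v^a-(1-v)^a$ covers $[-1,1]$, giving $[-|w|,|w|]\subseteq S$; and the coefficient $v^a+(1-v)^a$ attains the value $2^{1-a}>1$ at $v=1/2$, giving $2^{1-a}w\in S$. Iterating the dilation pushes $S$ to infinity in both directions, and combined with the interval property this forces $S=\dR$.

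\textbf{Master inequality.} Taking characteristic functions in \eqref{eq:fixed-point-X-only} and conditioning on $V,\xi_p$ yields
\[
\varphi_W(t)=\dE\bigl[\varphi_W(tV^a)\bigl(p\,\varphi_W(t(1-V)^a)+(1-p)\,\varphi_W(-t(1-V)^a)\bigr)\bigr].
\]
Since $|pz+(1-p)\bar z|\leq|z|$ for any $z\in\dC$, setting $g:=|\varphi_W|$, I obtain
\[
g(t)\leq\dE\bigl[g(tV^a)\,g(t(1-V)^a)\bigr],\qquad(\ast)
\]
which is the workhorse for the Fourier estimate.

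\textbf{Bootstrap of the polynomial decay.} Assume inductively that $g(t)\leq C_\beta|t|^{-\beta}$ for $t\geq 1$ with some $\beta\in(0,1/a)$. Substituting into $(\ast)$ and using the $V\leftrightarrow 1-V$ symmetry, I restrict to $V\in[0,1/2]$ (so $t(1-V)^a\geq t/2^a$, giving $g(t(1-V)^a)\leq C_\beta 2^{a\beta}|t|^{-\beta}$), then split the remaining $\int_0^{1/2}g(tv^a)\,dv$ into the regions $\{tv^a\geq 1\}$ (inductive bound) and $\{tv^a<1\}$ (trivial bound $g\leq 1$). A short computation, in which the Beta integral $\int v^{-a\beta}(1-v)^{-a\beta}\,dv$ is finite thanks to $a\beta<1$, leads to
\[
g(t)\leq C'|t|^{-2\beta}+C''|t|^{-\beta-1/a}.
\]
For $\beta<1/a$ the exponent $2\beta<\beta+1/a$ is the slower (hence dominant) rate, so the decay exponent roughly doubles. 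Iterating this doubling, then switching to the additive improvement $\beta\mapsto\beta+1/a$ once $\beta$ crosses $1/a$ (modulo a logarithmic correction at the threshold), one reaches any prescribed exponent $k\rho$ with $\rho\in(0,1/a)$ and $k\geq 1$.

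\textbf{Main obstacle: the base case.} The bootstrap only starts once we know $g(t)=O(|t|^{-\beta_0})$ for some $\beta_0>0$. To produce this, I would iterate $(\ast)$ to depth $n$, obtaining $g(t)\leq\dE\prod_{i=1}^{2^n}g(D_it)$, where each $D_i$ is a product of $n$ independent factors of the form $V^a$ or $(1-V)^a$ corresponding to a root-to-leaf path of a binary tree. Because $S=\dR$ forces the law of $W$ to be non-lattice, by continuity $g\leq 1-\eta$ on some compact set $[t_1,t_2]\not\ni 0$. A central-limit analysis of $\log D_i=a\sum_j\log V_{i,j}$ (or $\log(1-V_{i,j})$) shows that, for $|t|$ large and $n$ of order $\log|t|$, a positive proportion of the $2^n$ scales $D_it$ falls into $[t_1,t_2]$ with high probability, each contributing a factor $\leq 1-\eta$ to the product; a careful count then yields the required initial polynomial decay and primes the induction.
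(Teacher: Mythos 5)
Your support argument and your ``master inequality'' $(\ast)$ coincide with the paper's (the paper also derives $\ABS{\varphi_W(t)}\leq \dE\bigl[\ABS{\varphi_W(tV^a)}\,\ABS{\varphi_W(t(1-V)^a)}\bigr]$ from the fixed-point equation), and your bootstrap step is correct: with $a\beta<1$ the Beta integral is finite, the splitting at $tv^a=1$ gives $g(t)\leq C'\ABS{t}^{-2\beta}+C''\ABS{t}^{-\beta-1/a}$, and by always using an exponent strictly below $1/a$ inside the integral one can indeed climb to any exponent $k\rho$, $\rho<1/a$. This part is a legitimate variant of the paper's Step~4, which instead iterates the linear inequality $\ABS{\varphi_W(t)}\leq\varepsilon\,\dE\ABS{\varphi_W(tV^a)}+C(T_\varepsilon/\ABS{t})^{\rho}$ once it knows $\ABS{\varphi_W}\leq\varepsilon$ outside a compact set.

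The genuine gap is exactly where you flag it: the base case. You never establish any initial decay (nor even that $\ABS{\varphi_W(t)}\to 0$, or stays bounded away from $1$, as $\ABS{t}\to\infty$), and the branching sketch you offer is not a proof. Two concrete problems: (i) the claim that ``a positive proportion of the $2^n$ scales $D_it$ falls into $[t_1,t_2]$'' is not correct as stated --- the leaf positions of this branching random walk spread over a window of width of order $\sqrt n$, so the fraction of leaves in a fixed interval is of order $1/\sqrt n$, not a constant; (ii) what the argument actually needs is a \emph{quantitative lower-deviation bound}, namely $\dP\bigl(\#\{i: D_it\in[t_1,t_2]\}\leq c\log\ABS{t}\bigr)\leq C\ABS{t}^{-\beta_0}$ for $n$ tuned to $\log\ABS{t}$, uniformly in the dependence between sibling increments; ``a careful count'' of this kind is a nontrivial branching-random-walk estimate and is precisely the content you would have to supply. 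The paper avoids this machinery entirely: it first shows $\ABS{\varphi_W(t)}<1$ for $t\neq0$ (from the full support), then a $0$--$1$ dichotomy for $\limsup_{\ABS{t}\to\infty}\ABS{\varphi_W(t)}$ via Fatou, and then rules out the value $1$ by a contradiction argument along a \emph{single} multiplicative path $tM_1\cdots M_n$ with $M=V^a$ (using $\dE[M^{1/a}]<1$, Markov and dominated convergence), after which the smallness at infinity feeds the linear iteration of Step~4. Until you either prove your branching count with polynomial error probabilities or replace it by an argument of this single-path type, the Fourier bound \eqref{eq:upper_bound_Fourier} is not established.
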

Before proving the theorem, we mention a few obvious consequences. 

\begin{cor}
\label{cor:density_W_dim_1}
   For any initial composition $\alpha=(1,0)$ or $(0,1)$, the random variable $W_\alpha$ admits a positive, bounded and smooth density. Its support is the whole real line $\dR$. 
\end{cor}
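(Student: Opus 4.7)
The plan is to read off Corollary~\ref{cor:density_W_dim_1} as a direct consequence of Theorem~\ref{thm:fct-carac-X}. By Remark~\ref{rk:loiLX}, $W_{(0,1)}\overset{a.s.}{=} -W_{(1,0)}$, so it suffices to treat $W:=W_{(1,0)}$; the statement for $W_{(0,1)}$ then follows by the change of variable $x\mapsto -x$.

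Since $a\in(\tfrac{1}{2},1)$ one has $1/a>1$, so picking $\rho\in(1,1/a)$ and $k=1$ in \eqref{eq:upper_bound_Fourier} gives $|\varphi_W(t)|\leq C/|t|^{\rho}$, which is integrable at infinity; hence $\varphi_W\in L^1(\dR)$. Fourier inversion then provides a bounded continuous density
\begin{equation*}
f_W(x)=\frac{1}{2\pi}\int_\dR \ee^{-itx}\varphi_W(t)\,\dd t,
\end{equation*}
with $\NRM{f_W}_{\infty}\leq \NRM{\varphi_W}_{1}/(2\pi)$. For smoothness, given any integer $n\geq 0$, choose $k$ so that $k\rho>n+1$ for some $\rho\in(0,1/a)$; then $t^n\varphi_W(t)\in L^1(\dR)$, and repeated differentiation under the integral yields $f_W\in C^n$. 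Since $n$ is arbitrary, $f_W\in C^\infty(\dR)$. The support statement is exactly the first bullet of Theorem~\ref{thm:fct-carac-X}.

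For the strict positivity of $f_W$ on all of $\dR$, I would exploit the fixed-point equation~\eqref{eq:fixed-point-X-only}. Writing $W\overset{\cL}{=} A+B$ with independent $A:=V^a W^{(1)}$ and $B:=(2\xi_p-1)(1-V)^a W^{(2)}$, a Jacobian computation shows
\begin{equation*}
f_A(z)=\int_0^1 v^{-a}f_W(z/v^a)\,\dd v,
\end{equation*}
and continuity of $f_W$ combined with $\Supp(W)=\dR$ implies that $\{f_W>0\}$ is a dense open subset of $\dR$. As $v$ ranges over $(0,1)$, the map $v\mapsto z/v^a$ parametrises the open half-line $(z,+\infty)$ if $z>0$ or $(-\infty,z)$ if $z<0$, so the integrand is strictly positive on a set of positive Lebesgue measure; hence $f_A(z)>0$ for every $z\neq 0$. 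The same computation applied to $B$, using that both signs $\pm$ occur with positive probability since $p\in(\tfrac{3}{4},1)$, yields $f_B(z)>0$ for every $z\neq 0$. Finally, for any $x\in\dR$, the integrand of
\begin{equation*}
f_W(x)=\int_\dR f_A(y)\,f_B(x-y)\,\dd y
\end{equation*}
is strictly positive on $\dR\setminus\{0,x\}$, a set of full Lebesgue measure, so $f_W(x)>0$.

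The main work has already been carried out in Theorem~\ref{thm:fct-carac-X}; the only non-routine step here is the last paragraph, where the convolution structure inherited from the fixed-point equation is used to upgrade mere non-negativity (and density of $\{f_W>0\}$) to genuine pointwise positivity on all of $\dR$.
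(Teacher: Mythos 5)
Your Fourier-analytic part coincides with the paper's proof: reduce to $W=W_{(1,0)}$ via $W_{(0,1)}\overset{a.s.}{=}-W_{(1,0)}$, take $k=1$ and $\rho\in(1,1/a)$ in \eqref{eq:upper_bound_Fourier} to get $\varphi_W\in L^1(\dR)$, invert to obtain a bounded continuous density, and use integrability of $t^n\varphi_W(t)$ for all $n$ to get smoothness; the support claim is indeed the first bullet of Theorem~\ref{thm:fct-carac-X}. (The paper settles positivity in one line, from full support, as positivity a.e.; your attempt to upgrade this to strict pointwise positivity is the only place where you go beyond it.)

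That last step, however, contains a genuine error: the two summands in \eqref{eq:fixed-point-X-only} are \emph{not} independent, because the same uniform variable $V$ appears in both, as $V^a$ in the first term and $(1-V)^a$ in the second. Only $V$, $\xi_p$, $L_1^{(1)}$, $L_1^{(2)}$ are mutually independent; $A=V^aW^{(1)}$ and $B=(2\xi_p-1)(1-V)^aW^{(2)}$ are correlated through $V$, so the law of $W$ is not the convolution of the laws of $A$ and $B$, and the identity $f_W(x)=\int_\dR f_A(y)f_B(x-y)\,\dd y$ on which your conclusion rests is invalid. The argument can be repaired by conditioning on $V$: given $V=v\in(0,1)$, the two terms $v^aW^{(1)}$ and $(2\xi_p-1)(1-v)^aW^{(2)}$ \emph{are} independent, with densities $g_v(\cdot)=v^{-a}f_W(\cdot/v^a)$ and $h_v(\cdot)=(1-v)^{-a}\bigl(pf_W(\cdot/(1-v)^a)+(1-p)f_W(-\cdot/(1-v)^a)\bigr)$, so that $f_W(x)=\int_0^1 (g_v*h_v)(x)\,\dd v$ for a.e.\ $x$. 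For fixed $v$ and $x$, the set where $g_v(y)h_v(x-y)>0$ is the intersection of two dense open subsets of $\dR$ (using that $\{f_W>0\}$ is dense open, as you argued), hence has positive measure, so $(g_v*h_v)(x)>0$ for every $x$; one then checks, via the domination $(g_v*h_v)(x)\leq \NRM{f_W}_\infty v^{-a}$ which is integrable in $v$, that $x\mapsto\int_0^1(g_v*h_v)(x)\,\dd v$ is continuous, hence coincides everywhere with the continuous version $f_W$, giving $f_W>0$ on all of $\dR$. With this correction your route gives a genuinely stronger (pointwise) positivity statement than the a.e.\ positivity asserted in the paper's proof.
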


 \begin{proof}[Proof of Corollary~\ref{cor:density_W_dim_1}]
     Recall that $W_{(1,0)} \overset{a.s.}= -W_{(0,1)}$. We notice that $1<\frac{1}{a}<2$, then taking $k=1$ in Theorem \ref{thm:fct-carac-X}, we deduce that the characteristic function $\varphi_W$ of $W$ is integrable on $\dR$. By Fourier inversion theorem, $W$ has a bounded density on $\dR$. Since the support of $W$ is $\dR$, its density is positive a.e. Besides, for any $k\geq 1$, $t\mapsto t^k\varphi_W(t)$ is integrable on $\dR$, which implies that the density function is of class $\cC^k$. The corollary is proved.
 \end{proof}
 
\begin{rk}
From the fixed-point equation \eqref{eq:fixed-point-X-only}, we deduce that the density function $f$ of $L_1$ satisfies the following integral equation, for all $x\in \dR$,
\begin{equation*}
    f(x) = \int_{\dR}\int_{0}^1 \frac{1}{z^a} f\big(x-(1-z)^ay\big)\big(pf(y)+(1-p)f(-y)\big)\mathrm{d}z\mathrm{d}y.
\end{equation*}
This equation is unfortunately too complex to easily deduce properties on the distribution of $L_1$.
\end{rk}

We recall that $L_q\overset{\cL}{=}(2\xi_q-1)W$, where $\xi_q$ is a Bernoulli random variable with parameter $q$, independent on $W$. Then we easily deduce that Theorem~\ref{thm:density_dim_1} is a direct consequence of Corollary \ref{cor:density_W_dim_1}. 
See Figure~\ref{fig:simulations} in Section~\ref{sec:approx-density} for a few illustrations of the density.

 \begin{proof}[Proof of Theorem \ref{thm:fct-carac-X}]
 We proceed as in the proof of \cite[Thm~6]{Chauvin2015}; the details are slightly different but still very similar to their case. Indeed, we only add one ball at each step but we do it randomly, whereas they insisted that they needed to add at least two balls at each step since they do it deterministically. We have been inspired by \cite[Lem.~3.1 and Lem.~3.2]{Liu2001} as well.

We start by proving that $\text{Supp}(W)=\dR$, with $\text{Supp}(W)$ denoting the support of $W$. By Theorem~\ref{thm:Janson-dim1}, the variance of $W$ is non-zero, so there exist $w_1,w_2\in\text{Supp}(W)$ with $w_1\neq w_2$. Because of the fixed-point equation \eqref{eq:fixed-point-X-only}, for any $t\in[0,1]$ we have
 \begin{equation}
 \label{eq:supp-X}
     w_1 t^a  +w_2(1-t)^a \in\text{Supp}(W),
 \end{equation}
 since $\dP(\xi_p=1)=p>0$. 
 Moreover, it holds that $[w_1,w_2]\subset \BRA{w_1 t^a  +w_2(1-t)^a :\ t\in[0,1]}$. This entails that $[w_1,w_2]\subset\text{Supp}(W)$. 
 
 Using $\dP(\xi_p=-1)=1-p>0$, similar computations lead to $[w_1,-w_2]\subset\text{Supp}(W)$. Moreover, since $\dE[W]>0$, there exists $w>0$ in $\text{Supp}(W)$. Taking $-w_1=w_2=w>0$, we obtain that $[-w,w]\subset\text{Supp}(W)$. 

Next, choosing $t=1/2$, Equation \eqref{eq:supp-X} ensures that as soon as $w\in\text{Supp}(W)$, $2^{1-a }t\in\text{Supp}(W)$. Since $a <1$, $2^{1-a }>1$ and the images of $[-w,w]$ by the iterates of the homothetic transformation $w \mapsto 2^{1-a }w$ fill the whole real line, which yields that $\text{Supp}(W)=\dR$.

\medskip

We move to the proof of \eqref{eq:upper_bound_Fourier}, and we split the argument in four steps.

\medskip

\textit{Step 1:} We start by showing that for any $t\neq0$, $\ABS{\varphi_W(t)}<1$. Assume there exists $t_0\in\dR$ such that $\ABS{\varphi_W(t_0)}=1$, then  $\varphi_W(t_0)=\ee^{i\theta_0}$ for some $\theta_0\in\dR$. We deduce that $\dE\SBRA{\mathrm{Re}\PAR{1-e^{i(t_0W-\theta_0)}}}=0$, and then $\ee^{it_0W}=\ee^{i\theta_0}$ a.s. This is possible only if $t_0=0$, since $\text{Supp}(W)=\dR$.

\medskip

\textit{Step 2:} We show that
\begin{equation}
\label{eq:0-1-law}
    \ds{\limsup_{t\to\pm\infty} \ABS{\varphi_W(t)}\in\BRA{0,1}}.
\end{equation}
Using again the fixed-point equation \eqref{eq:fixed-point-X-only}, we obtain by conditioning on $V$ that for any $t\in\dR$,
 \begin{equation}
 \label{eq:fct-carac-X-V}
     \varphi_W(t)=p\dE\SBRA{\varphi_W(tV^a )\varphi_W(t(1-V)^a )}+(1-p)\dE\SBRA{\varphi_W(tV^a )\varphi_W(-t(1-V)^a )}.
 \end{equation}
In particular, since $V$ is uniformly distributed on $[0,1]$, it is a.s.\ non-zero. Consequently, Equation~\eqref{eq:fct-carac-X-V} together with Fatou's lemma ensure that
 \begin{align*}
     \limsup_{t\to\pm\infty} \ABS{\varphi_W(t)} &\leq 
    p\limsup_{t\to\pm\infty} \dE\SBRA{\ABS{\varphi_W(tV^a )}\times\ABS{\varphi_W(t(1-V)^a )}} \\
    &\qquad\qquad +(1-p)\limsup_{t\to\pm\infty}\dE\SBRA{\ABS{\varphi_W(tV^a )}\times\ABS{\varphi_W(-t(1-V)^a )}}\\
    &\leq 
    p\limsup_{t\to\pm\infty} \ABS{\varphi_W(t)}^2  +(1-p)\limsup_{t\to\pm\infty}\ABS{\varphi_W(t)}.
 \end{align*}
 We deduce 
 \begin{equation*}
     \limsup_{t\to\pm\infty} \ABS{\varphi_W(t)} 
    \leq 
    \limsup_{t\to\pm\infty} \ABS{\varphi_W(t)}^2\leq \PAR{\limsup_{t\to\pm\infty} \ABS{\varphi_W(t)}}^2,
 \end{equation*}
 by monotony of $x\mapsto x^2$ on $\dR_+$, concluding the proof of \eqref{eq:0-1-law}.

\medskip

\textit{Step 3:} We now show that in \eqref{eq:0-1-law} the only possible value is $0$. Reasoning by contradiction, we assume that $\ds{\limsup \ABS{\varphi_W(t)}=1}$ as $t\to\pm\infty$. Let $t_0>0$ and $\varepsilon>0$ be such that $0<\ABS{\varphi_W(t_0)}<1-\varepsilon$. Choose $t_1 = t_1(\varepsilon)$ and $t_2 = t_2(\varepsilon)$ such that $0<t_1<t_0<t_2<\infty$ and
\begin{equation*}
    \ABS{\varphi_W(t_1)}=\ABS{\varphi_W(t_2)}=1-\varepsilon \quad\text{and}\quad
    \ABS{\varphi_W(t)}\leq1-\varepsilon\ \text{for $t\in[t_1,t_2]$.}
\end{equation*}
This is possible thanks to the intermediate value theorem, since $\varphi_W$ is continuous and we assumed that $\limsup_{t\to\pm\infty} \ABS{\varphi_W(t)}=1=\ABS{\varphi_W(0)}$. Moreover, if $(\varepsilon_n)_{n\geq 1}$ is a sequence tending to $0$ as $n$ goes to infinity and if $y$ is a limit point of ($t_1(\varepsilon_n))_{n\geq 1}$, then by definition of $t_1$ and the continuity of $\varphi_W$, $\ABS{\varphi_W(y)} = 1$. This entails that $y=0$ and that $\lim_{\varepsilon\to0} t_1(\varepsilon)=0$.

Again, Equation \eqref{eq:fct-carac-X-V} implies that for all $t>0$,
 \begin{equation*}
    \ABS{\varphi_W(t)} \leq 
    p\dE\SBRA{\ABS{\varphi_W(tV^a )}\times \ABS{\varphi_W(t(1-V)^a )}} +(1-p)\dE\SBRA{\ABS{\varphi_W(tV^a )}\times \ABS{\varphi_W(-t(1-V)^a )}},
 \end{equation*}
 with $\ABS{\varphi_W(-t(1-V)^a )}=\ABS{\overline{\varphi_W(t(1-V)^a )}}=\ABS{\varphi_W(t(1-V)^a )} $, and, for all $n\geq 1$ and $t>0$,
 \begin{align}
 \label{eq:majo-phi-V}
    \ABS{\varphi_W(t)} &\leq \dE\SBRA{\ABS{\varphi_W(tV^a )}\times \ABS{\varphi_W(t(1-V)^a )}}\\
    &\leq \dE\SBRA{\ABS{\varphi_W(tM)}} \leq \phi_n(t) ,\nonumber
 \end{align}
where $M =V^a$ and $\phi_n(t)=\dE\SBRA{\ABS{\varphi_W(tM_1\cdots M_n)}}$, with $M_1,\ldots,M_n$ i.i.d.\ copies of $M$. (Notice that one could also take $M = (1-V)^a $ or $M = \max(V^a ,(1-V)^a )$.) On the one hand, using the definitions of $t_1,t_2$, we find
 \begin{align*}
     \phi_n(t) &\leq (1-\varepsilon)\dP(t_1<tM_1\cdots M_n\leq t_2) +1 - \dP(t_1<tM_1\cdots M_n\leq t_2)\\
     &\leq 1-\varepsilon\dP(t_1<tM_1\cdots M_n\leq t_2)
 \end{align*}
 and by \eqref{eq:majo-phi-V}
  \begin{equation*}
    1-\varepsilon = \ABS{\varphi_W(t_2)}\leq\dE[\phi_n(t_2V^a )\phi_n(t_2(1-V)^a )],
 \end{equation*}
 which leads to 
 \begin{equation}
 \label{eq:somme-proba}
     \dP(t_1<t_2V^a  M_1\cdots M_n\leq t_2)+\dP(t_1<t_2(1-V)^a  M_1\cdots M_n\leq t_2)\leq 1 +\varepsilon.
 \end{equation}
 Besides, when $\varepsilon$ goes to $0$, we find that 
\begin{equation*}
    \dP(t_1<t_2x M_1\cdots M_n\leq t_2) \longrightarrow \dP(xM_1\cdots M_n\leq 1),
\end{equation*}
which, together with dominated convergence, ensures that as $\epsilon\to 0$
\begin{equation*}
    \dP(t_1<t_2 V^a  M_1\cdots M_n\leq t_2) \longrightarrow  \dP(V^a  M_1\cdots M_n\leq 1)
\end{equation*}
and the same thing holds for $(1-V)^a $.  Thus, by \eqref{eq:somme-proba} and taking $\varepsilon\to 0$, we deduce
\begin{equation}\label{eq:somme-proba2}
\dP(V^a  M_1\cdots M_n\leq 1)+\dP((1-V)^a  M_1\cdots M_n\leq 1)\leq 1.
\end{equation}
On the other hand, we  also have
\begin{equation*}
    \dP(xM_1\cdots M_n\leq 1) \underset{n\to \infty}{\longrightarrow} 1
\end{equation*}
thanks to Markov's inequality, since $\dE[M^{1/a }]<1$. Using again dominated convergence, we find 
\begin{equation*}
     \dP(V^a  M_1\cdots M_n\leq 1)+\dP((1-V)^a  M_1\cdots M_n\leq 1) \underset{n\to \infty}{\longrightarrow} 2
\end{equation*}
and we have a contradiction with \eqref{eq:somme-proba2}. The third step is proved.

\medskip

\textit{Step 4:} Finally, we focus our attention on the integrability of $\varphi_W$ and prove \eqref{eq:upper_bound_Fourier}. Let $\varepsilon>0$ and $T_\varepsilon>0$ be such that $\ABS{\varphi_W(t)}\leq\varepsilon$ as soon as $\ABS{t}\geq T_\varepsilon$. The quantity $T_\varepsilon>0$ exists (and is finite)\ due to the third step of the proof. We have from Equation~\eqref{eq:fct-carac-X-V} that for any $t\in\dR$,
 \begin{align*}
     \ABS{\varphi_W(t)} &\leq p\dE\SBRA{\ABS{\varphi_W(tV^a )}\times \ABS{\varphi_W(t(1-V)^a )}} +(1-p)\dE\SBRA{\ABS{\varphi_W(tV^a )}\times \ABS{\varphi_W(-t(1-V)^a )}} \\
     &\leq \varepsilon\dE\SBRA{\ABS{\varphi_W(tV^a )}}+\dP((1-V)^a \ABS{t}\leq T_\varepsilon).
 \end{align*}

 Hence, since $\dE[V^{-a \rho}]<\infty$ as soon as $\rho\in\PAR{0,1/a }$, we find thanks to Markov's inequality that there exists a constant $C=\frac{1}{1-\rho a}>0$ such that, for any $t\neq0$,
 \begin{equation*}
     \ABS{\varphi_W(t)} \leq \varepsilon\dE\SBRA{\ABS{\varphi_W(tV^a )}}+C\PAR{\frac{T_\varepsilon}{\ABS{t}}}^{\rho}.
 \end{equation*}
Then, by iterating the last inequality, we find that
\begin{equation*}
    \ABS{\varphi_W(t)} \leq \varepsilon^n\dE\SBRA{\ABS{\varphi_W(tV_1^a \ldots V_n^a )}}+C\PAR{\frac{T_\varepsilon}{\ABS{t}}}^{\rho}\sum_{k=0}^{n-1}\PAR{\varepsilon\dE[V^{-a \rho}]}^k.
\end{equation*}
When $n$ goes to infinity, this implies that
 \begin{equation}
 \label{eq:majo-varphiW-dim1}
     \ABS{\varphi_W(t)} \leq C\PAR{\frac{T_\varepsilon}{\ABS{t}}}^{\rho}\frac{1}{1- \varepsilon\dE[V^{-a \rho}]}=\frac{1}{1-a\rho-\varepsilon}\PAR{\frac{T_\varepsilon}{|t|}}^\rho
 \end{equation}
 if $\varepsilon$ is chosen such that $\varepsilon\dE[V^{-a \rho}]<1$.
This is enough to conclude that for any $\rho\in(0,1/a)$, we have $\varphi_X(t)\underset{\pm\infty}{=} O(t^{-\rho})$.
We implement \eqref{eq:majo-varphiW-dim1} in Equation \eqref{eq:fct-carac-X-V}, and we thus  obtain, for any $\rho\in\PAR{0,1/a}$, $\varepsilon\in \PAR{0,1-a\rho}$, and $\ABS{t}>T_\varepsilon$, 
\begin{align*}
\ABS{\varphi_W(t)} 
     &\leq \varepsilon\dE\SBRA{\ABS{\varphi_W(tV^a )}}+\dE\SBRA{\ABS{\varphi_W(V^{a}t)}\ind_{(1-V)^a \ABS{t}\leq T_\varepsilon}}\\
     &\leq \varepsilon\dE\SBRA{\ABS{\varphi_W(tV^a )}}+\frac{1}{1- a\rho-\varepsilon}\PAR{\frac{T_\varepsilon}{\ABS{t}}}^{\rho}\dE\SBRA{V^{-a\rho}\ind_{V\geq 1-\PAR{\frac{T_\varepsilon}{\ABS{t}}}^{1/a}}}\\
     &\leq \varepsilon\dE\SBRA{\ABS{\varphi_W(tV^a )}}+\frac{1}{1- a\rho-\varepsilon}\PAR{\frac{T_\varepsilon}{\ABS{t}}}^{\rho}\dE\SBRA{V^{-1}\ind_{V\geq 1-\PAR{\frac{T_\varepsilon}{\ABS{t}}}^{\rho}}}\\
     &\leq \varepsilon\dE\SBRA{\ABS{\varphi_W(tV^a )}}-\frac{1}{1- a\rho-\varepsilon}\PAR{\frac{T_\varepsilon}{\ABS{t}}}^{\rho}\ln\PAR{1-\PAR{\frac{T_\varepsilon}{\ABS{t}}}^{\rho}}.
\end{align*}
As $-\ln(1-u)\leq 2 u$ for all $ u\in(0,1/2)$, there exists a constant $C_\varepsilon$ (that may change from line to line) such that for all $ \ABS{t}>2^{1/\rho}T_\varepsilon$,
\begin{equation*}
\ABS{\varphi_W(t)} 
     \leq \varepsilon\dE\SBRA{\ABS{\varphi_W(tV^a )}}+C_\varepsilon\PAR{\frac{T_\varepsilon}{\ABS{t}}}^{2\rho},
\end{equation*}
and then
\begin{equation*}
\ABS{\varphi_W(t)} 
     \leq C_\varepsilon\PAR{\frac{T_\varepsilon}{\ABS{t}}}^{2\rho}.
\end{equation*}
Iterating this idea, we obtain that for any $k\geq 1$, $\ABS{\varphi_W(t)}=O\bigl(\ABS{t}^{-k\rho}\bigr)$ as $\ABS{t}\to\infty$.
 \end{proof}

\subsection{Moments  of the superdiffusive distribution}\label{sec:moments-dim1}

Recall that $W=W_{(1,0)}$, where $W_{(1,0)}$ has been defined in Theorem~\ref{thm:Janson-dim1}. The main objective of this part is to prove the following result, as well as Theorem~\ref{thm:moments}. 
\begin{thm}\label{thm:exp-moments}
The random variables $L_q$ and $W$ have finite moments of all order and their probability distribution are determined by their moments.
Moreover, their Laplace series have an infinite radius of convergence. 
\end{thm}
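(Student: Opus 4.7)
The plan is to reduce the theorem to showing that $\dE[\ee^{t|L_1|}]<\infty$ for every $t\in\dR$, since this single fact yields all three claims at once: finiteness of all moments, moment-determinacy (through the classical criterion that an MGF which exists in a neighborhood of $0$ uniquely identifies the law), and infinite radius of the Laplace series. The statements for $W$ and $L_q$ then follow immediately from the distributional identities $W\overset{\cL}{=}L_1$ (Remark~\ref{rk:loiLX}) and $L_q\overset{\cL}{=}(2\xi_q-1)L_1$, which give $|W|\overset{\cL}{=}|L_q|\overset{\cL}{=}|L_1|$.

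\textbf{Step 1 (finiteness of all moments).} Set $N_k:=\dE|L_1|^k$. I would argue by strong induction, the base cases $k\leq 2$ being given by Theorem~\ref{thm:Janson-dim1} (with $N_1<\infty$ coming from Cauchy-Schwarz). For the inductive step, taking absolute values in \eqref{eq:fixed-point-X-only} and using $|2\xi_p-1|=1$ together with independence yields
$$N_k\leq \sum_{j=0}^{k}\binom{k}{j}B(ja+1,(k-j)a+1)\,N_j\,N_{k-j}.$$
The extremal terms $j=0$ and $j=k$ contribute $\frac{2}{ka+1}N_k$; applying the inequality first to the bounded truncation $(-M)\vee L_1\wedge M$, isolating the analogue of $N_k$ on the left (legitimate because $\frac{2}{ka+1}<1$ for $k\geq 2$ since $a>\frac{1}{2}$), and then letting $M\to\infty$ by monotone convergence produces a finite bound on $N_k$.

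\textbf{Step 2 (exponential growth bound).} With the substitution $\tilde m_k:=N_k\,\Gamma(ka+1)/k!$, the previous inequality rearranges (after multiplying through by $\tfrac{ka+1}{ka-1}$ and simplifying the beta factors with $\Gamma(ka+2)=(ka+1)\Gamma(ka+1)$) into
$$\tilde m_k\leq \frac{1}{ka-1}\sum_{j=1}^{k-1}\tilde m_j\,\tilde m_{k-j},\qquad k\geq 2.$$
I claim $\tilde m_k\leq CB^k$ with $C:=2a-1$ and $B:=\max(\tilde m_1/C,\,1)$. Indeed $\tilde m_1\leq CB$ by construction, and the inductive step reduces to checking that $\frac{(k-1)C}{ka-1}\leq 1$ for every $k\geq 2$, that is $\frac{k-1}{ka-1}\leq\frac{1}{2a-1}$. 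Since $\frac{d}{dk}\bigl(\tfrac{k-1}{ka-1}\bigr)=\tfrac{a-1}{(ka-1)^2}<0$ for $a<1$, the function decreases on $[2,\infty)$ and attains its maximum $1/(2a-1)=1/C$ exactly at $k=2$, so the bound holds with equality at $k=2$ and strictly for $k\geq 3$.

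\textbf{Step 3 (entire moment-generating function).} Unwinding the definition of $\tilde m_k$ gives $N_k\leq CB^k k!/\Gamma(ka+1)$, hence
$$\dE\bigl[\ee^{|t||L_1|}\bigr]=\sum_{k\geq 0}\frac{|t|^k N_k}{k!}\leq C\sum_{k\geq 0}\frac{(B|t|)^k}{\Gamma(ka+1)}=C\,E_{a,1}(B|t|)<\infty,\qquad t\in\dR,$$
where $E_{a,1}$ is the Mittag-Leffler function, which is entire for $a>0$. Thus $\dE[\ee^{tL_1}]$ is finite for all real $t$; Fubini then identifies it with the convergent series $\sum_k\dE[L_1^k]t^k/k!$, giving infinite radius. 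Finiteness of the MGF in a neighborhood of $0$ implies moment-determinacy by the standard criterion (e.g.\ Billingsley, \emph{Probability and Measure}, Thm~30.1), and the corresponding conclusions for $W$ and $L_q$ follow from the identities above.

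\textbf{Main obstacle.} The technical crux is Step~2: the convolutive recursion could a priori allow super-exponential growth, but the prefactor $1/(ka-1)$ is just sharp enough, via the elementary monotonicity of $(k-1)/(ka-1)$ and its extremum at $k=2$, to confine $\tilde m_k$ to purely exponential growth $CB^k$. This pure exponential control is precisely what enables comparison with the \emph{entire} Mittag-Leffler function $E_{a,1}$ and thereby produces a globally finite MGF, rather than one with merely a finite radius of analyticity.
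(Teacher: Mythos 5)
Your Steps 2 and 3 are sound and, apart from constants, coincide with the paper's own quantitative argument (Lemma~\ref{lem:GammaMomentsbound} and Corollary~\ref{cor:laplaceL}); the only methodological difference there is that you invoke the ``entire moment-generating function implies moment-determinacy'' criterion where the paper uses Carleman's criterion, and both are valid once the bound $\dE[|L_1|^k]\leq CB^k k!/\Gamma(ka+1)$ is in hand. The genuine gap is in Step 1. The paper does \emph{not} obtain finiteness of $\dE[|L_1|^k]$ from the fixed-point equation: it imports it from \cite{BercuHG2019} (uniform $L^k$ bounds on the martingale $S_n/n^a$), and only because of that prior input is the subtraction of the two extremal terms legitimate. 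Your truncation is meant to avoid this input, but it does not produce the inequality you use. Write $Z=|L_1|$ and $N_k^M=\dE[(Z\wedge M)^k]$. The fixed-point equation does give that $Z\wedge M$ is stochastically dominated by $(V^aZ_1\wedge M)+((1-V)^aZ_2\wedge M)$ with $Z_1,Z_2$ independent copies of $Z$, and the middle terms of the binomial expansion are bounded by $\binom{k}{j}\dE[V^{ja}(1-V)^{(k-j)a}]N_jN_{k-j}$, finite by induction. But the extremal terms are \emph{not} $\frac{1}{ka+1}N_k^M$ each: the inequality $(V^aZ_1)\wedge M\leq V^a(Z_1\wedge M)$ fails on $\{Z_1>M\}$, where the left side can equal $M$ while the right side is $V^aM$. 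Conditioning on $Z_1$ one finds instead
\[
\dE\SBRA{\bigl((V^aZ_1)\wedge M\bigr)^k}\;\leq\;\frac{1}{ka+1}\,\dE\SBRA{Z^k\ind_{Z\leq M}}+M^k\,\dP(Z>M),
\]
so after isolating the truncated moment the best available inequality is
\[
\PAR{1-\frac{2}{ka+1}}\dE\SBRA{Z^k\ind_{Z\leq M}}\;\leq\;M^k\,\dP(Z>M)+R_k,
\]
with $R_k<\infty$ the sum of the middle terms. The boundary term $M^k\dP(Z>M)$ is of exactly the size of the quantity you are trying to bound; the induction hypothesis only gives $M^k\dP(Z>M)\leq M\,\dE[Z^{k-1}\ind_{Z>M}]=o(M)$, so the right-hand side is not bounded uniformly in $M$ and the monotone-convergence passage $M\to\infty$ cannot be made. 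This is not a removable technicality: as recalled in the remark after Theorem~\ref{thm:dim1_solution_dist_eq}, the same fixed-point equation admits other solutions containing a strictly $1/a$-stable component, so finiteness of high moments of the particular solution $L_1$ genuinely requires input beyond the equation; the finite second moment from Theorem~\ref{thm:Janson-dim1} pins down \emph{which} solution you have, but a one-step truncation does not propagate finiteness to higher orders.

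The repair is either to do what the paper does and quote the known fact that $L_q$ has finite moments of all orders (\cite{BercuHG2019}, via uniform $L^k$ estimates on $S_n/n^a$), or to run your induction on approximations of $L_1$ that trivially have all moments finite (the normalized walk $S_n/n^a$, or the finite-depth iterates of the fixed-point recursion) and pass to the limit by Fatou. With that replacement, your recursion $\tilde m_k\leq\frac{1}{ka-1}\sum_{j=1}^{k-1}\tilde m_j\tilde m_{k-j}$, the exponential bound $\tilde m_k\leq(2a-1)B^k$ (your monotonicity check of $(k-1)/(ka-1)$ with equality at $k=2$ is correct), and the Mittag-Leffler comparison in Step 3 all go through and yield the theorem.
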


To prove these results, we will use the following Carleman's criterion (see e.g.\ \cite{Simon98}).
\begin{thm}[Carleman's criterion]\label{thm:carleman}
Let $X$ be a real random variable with finite moments of all order. If $\sum_{k\geq 1}\PAR{\dE[|X|^{2k}]}^{-1/2k}=\infty$, then the distribution of $X$ is uniquely determined by its moments.
\end{thm}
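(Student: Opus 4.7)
The plan is to reduce all three assertions (for $L_q$ and for $W$) to the single random variable $L_1$, and then to establish them for $L_1$ via a moment-growth estimate driven by the fixed-point equation. By Remark~\ref{rk:loiLX}, $W\overset{\cL}{=}L_1$, and by \eqref{eq:link_L_1_L_q}, $L_q\overset{\cL}{=}(2\xi_q-1)L_1$ for an independent Bernoulli $\xi_q$; in particular $|L_q|\overset{\cL}{=}|W|\overset{\cL}{=}|L_1|$, so Carleman's criterion (Theorem~\ref{thm:carleman}) applied to any of the three variables rests on the same even moments, and $\dE[\ee^{tL_q}]=q\dE[\ee^{tL_1}]+(1-q)\dE[\ee^{-tL_1}]$ inherits its radius of convergence from the Laplace series of $L_1$. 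It therefore suffices to treat $L_1$.

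First, I would prove by induction on $k$ that $\dE[L_1^k]<\infty$ for every $k\geq 1$. Raising \eqref{eq:fixed-point-X-only} to the $k$-th power, expanding via the binomial theorem, and using independence of $V$, $\xi_p$, $L_1^{(1)}$, $L_1^{(2)}$, the two extremal indices $j\in\{0,k\}$ contribute a factor $(1+c_k)/(ka+1)$ on $\dE[L_1^k]$, where $c_k:=\dE[(2\xi_p-1)^k]$ equals $1$ for even $k$ and $a$ for odd $k$. Since $a>1/2$ and $c_k\leq 1$, the residual coefficient $(ka-c_k)/(ka+1)$ is strictly positive for $k\geq 2$, so I can solve for $\dE[L_1^k]$ in terms of moments of order $<k$, all finite by induction. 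Using $\dE[V^{ja}(1-V)^{(k-j)a}]=B(ja+1,(k-j)a+1)$ and writing $\dE[L_1^k]=\frac{(k-1)!}{a\Gamma(ka)}m_k$, the relation simplifies exactly to the recursion \eqref{eq:recursion_moments}, thereby proving Theorem~\ref{thm:moments} as a byproduct.

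Next, I would establish an exponential bound $m_k\leq MA^k/\sqrt{k}$ for some constants $M,A>0$, by induction on $k$. Using $c_j\leq 1$, the inductive step reduces to the uniform estimate $\sum_{j=1}^{k-1}(j(k-j))^{-1/2}\leq C_0$ (obtained by splitting at $k/2$ and using $\sum_{j\leq k/2} j^{-1/2}=O(\sqrt{k})$), giving $m_k\leq M^2C_0 A^k/(ka-1)$; this is at most $MA^k/\sqrt{k}$ as soon as $MC_0\sqrt{k}\leq ka-1$, which holds for $k$ beyond some threshold, the finitely many smaller indices being absorbed by choosing $A$ large enough. Stirling's formula then yields $\log[(k-1)!/\Gamma(ka)]=(1-a)k\log k+O(k)$, so $\dE[L_1^k]^{1/k}=O(k^{1-a})$. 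Since $a>1/2$ forces $1-a<1/2$, the series $\sum_k k^{-(1-a)}$ diverges and Carleman's criterion applies, yielding moment-determination; and $(\dE[L_1^k]/k!)^{1/k}=O(k^{-a})\to 0$ shows the Laplace series is entire.

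The hard part will be the exponential bound of Step~2: a naive ansatz $m_k\leq MA^k$ is not preserved by the recursion when $a<1$, because the convolution $\sum_j m_jm_{k-j}$ grows like $k\cdot M^2A^k$ and dividing by $ka-1$ leaves a constant overhead that defeats the induction. The polynomial correction $1/\sqrt{k}$ is tuned precisely so that the Beta-type convolution $\sum_{j=1}^{k-1}(j(k-j))^{-1/2}$ stays bounded in $k$, turning the recursion into a self-improving inequality for $k$ large. Handling the finite number of small indices by inflating $A$ is slightly inelegant but causes no loss.
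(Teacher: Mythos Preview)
Your proposal does not address the stated theorem. Theorem~\ref{thm:carleman} is the classical Carleman criterion; the paper does not prove it but simply quotes it from the literature (see the reference to \cite{Simon98}). There is no proof in the paper to compare against.

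What you have written is instead a proof of Theorem~\ref{thm:exp-moments} (finite moments, moment-determinacy, entire Laplace series for $L_q$ and $W$), which \emph{invokes} Carleman's criterion as a tool. If that was your intent, your argument is correct and constitutes a genuine alternative to the paper's route. The paper bounds the absolute moments directly: Lemma~\ref{lem:GammaMomentsbound} proves $\dE[|W|^k]/k!\leq C^k(2/a)^{k-1}/\Gamma(ka+1)$ by a triangle-inequality induction on the fixed-point equation. The key simplification there is that the Beta kernel $\dE[V^{ja}(1-V)^{(k-j)a}]=\Gamma(ja+1)\Gamma((k-j)a+1)/\Gamma(ka+2)$ cancels exactly against the $\Gamma$-factors in the ansatz, so the inductive step reduces to the elementary inequality $a(k-1)\leq 2ka-2$ for $k\geq 3$; no polynomial correction is needed. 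Your approach works at the level of the normalized $m_k$, where those $\Gamma$-factors have already been stripped out, which is why you are forced to introduce the $1/\sqrt{k}$ factor and the uniform bound $\sum_{j=1}^{k-1}(j(k-j))^{-1/2}\leq C_0$ (which does hold, with limit $\pi$). Both approaches give $\dE[L_1^k]^{1/k}=O(k^{1-a})$ and hence Carleman plus an entire Laplace series; the paper's is shorter because passing to absolute moments before normalizing keeps the Beta--Gamma cancellation intact. Incidentally, your remark that $a>1/2$ forces $1-a<1/2$ is true but unnecessary: $\sum k^{-(1-a)}$ diverges for any $a>0$.
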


It is already known that $L_q$ has finite moments of all order, see \cite[Eq.~(4.12)]{BercuHG2019}. By Remark~\ref{rk:loiLX}, we have $\dE[\ABS{L_q}^k]=\dE[\ABS{W}^k]$ for all $k\geq 1$.
The idea is to use the fixed-point equation \eqref{eq:fixed-point-X-only} satisfied by $W$ to prove that $W$ (and thus $L_q$) satisfies the assumption of Carleman's criterion. We first prove a preliminary estimate.

\begin{lem}
\label{lem:GammaMomentsbound}
There exists a constant $C>0$ such that for any $k\geq1$,
\begin{equation}
\label{eq:induction-gamma}
\frac{\dE\SBRA{|W|^k}}{k!}\leq \frac{C^k}{\Gamma(ka+1)}\PAR{\frac{2}{a}}^{k-1}.
\end{equation}
\end{lem}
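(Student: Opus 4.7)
The plan is to bootstrap a recursive inequality for $\dE[|W|^k]$ from the fixed-point equation of Theorem~\ref{thm:fixed-point-dim1}, and then establish the claimed bound by induction on $k$. Applying the triangle inequality to $W \overset{\cL}{=} V^a W^{(1)} + (2\xi_p-1)(1-V)^a W^{(2)}$ (noting that $|2\xi_p-1|=1$), raising to the $k$-th power, expanding binomially, and taking expectations using the independence of $V$, $W^{(1)}$, $W^{(2)}$, $\xi_p$, I obtain
\begin{equation*}
\dE[|W|^k] \;\leq\; \sum_{j=0}^k \binom{k}{j}\,\dE\SBRA{V^{ja}(1-V)^{(k-j)a}}\,\dE[|W|^j]\,\dE[|W|^{k-j}].
\end{equation*}
Since $V$ is uniform on $[0,1]$, the middle factor is the beta integral $\Gamma(ja+1)\Gamma((k-j)a+1)/\Gamma(ka+2)$. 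Writing $\mu_k := \dE[|W|^k]/k!$, the inequality simplifies to
\begin{equation*}
\mu_k \;\leq\; \sum_{j=0}^k \frac{\Gamma(ja+1)\Gamma((k-j)a+1)}{\Gamma(ka+2)}\, \mu_j\, \mu_{k-j}.
\end{equation*}

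The key observation is that, because $\mu_0 = 1$, the two boundary terms $j=0$ and $j=k$ together contribute exactly $\frac{2\mu_k}{ka+1}$, which can be moved to the left-hand side. Provided $k \geq 2$, the factor $2/(ka+1)$ is strictly less than $1$ (since $a > 1/2$), so after rearrangement I get the cleaner recursion
\begin{equation*}
\mu_k \;\leq\; \frac{1}{ka-1}\sum_{j=1}^{k-1} \frac{\Gamma(ja+1)\Gamma((k-j)a+1)}{\Gamma(ka+1)}\, \mu_j\, \mu_{k-j}.
\end{equation*}
Now I would proceed by strong induction, assuming $\mu_j \leq \frac{C^j}{\Gamma(ja+1)}\PAR{\frac{2}{a}}^{j-1}$ for every $1 \leq j < k$. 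Plugging this into the recursion, the $\Gamma$-factors telescope and the sum over $j$ produces $k-1$ identical terms, yielding
\begin{equation*}
\mu_k \;\leq\; \frac{C^k\,(k-1)\PAR{\tfrac{2}{a}}^{k-2}}{(ka-1)\,\Gamma(ka+1)}.
\end{equation*}
The inductive step is complete as soon as the latter is bounded by $\frac{C^k}{\Gamma(ka+1)}\PAR{\frac{2}{a}}^{k-1}$, which reduces to the elementary inequality $(k-1)a \leq 2(ka-1)$, i.e.\ $k \geq 2/a - 1$. Since $a > 1/2$, this is satisfied automatically for every $k \geq 3$.

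The main obstacle is therefore the finite set of small indices (namely $k=1,2$, and possibly $k=3$ when $a$ is close to $1/2$) where the induction step as stated does not close. The remedy is that $\dE[|W|]$ and $\dE[W^2]$ are both known to be finite (by Theorem~\ref{thm:Janson-dim1} and Remark~\ref{rk:loiLX}), so by enlarging $C$ if necessary we can force the claimed bound to hold trivially for these exceptional values of $k$. Once $C$ is fixed large enough to cover all base cases, the induction developed above propagates the inequality to every $k \geq 1$, establishing \eqref{eq:induction-gamma}.
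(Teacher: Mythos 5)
Your proof is correct and follows essentially the same route as the paper: expand the fixed-point equation binomially, evaluate the Beta integrals, absorb the boundary terms $j=0$ and $j=k$ into the left-hand side (valid since all moments are already known to be finite), and close the induction via the inequality $a(k-1)\leq 2(ka-1)$ for $k\geq 3$, with $k=1,2$ handled as base cases by choosing $C$ large enough. The only cosmetic difference is that the paper fixes $C=\Gamma(a+1)\dE[L_1^2]^{1/2}$ explicitly, and your hedge about $k=3$ is unnecessary since $2/a-1<3$ whenever $a>1/2$.
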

\begin{proof}
We prove the proposition by induction. 
The second moment of $W\overset{\cL}{=}L_1$ is given in \cite{Janson2004,Bercu2018} and recalled in this paper in Theorem~\ref{thm:Janson-dim1}.
Let $C=\Gamma(a+1)\dE\SBRA{L_1^2}^{1/2}$. For $k=1$ and $k=2$, we easily see that Equation \eqref{eq:induction-gamma}  holds (thanks to Cauchy-Schwarz inequality for $k=1$). 

Let $k\geq 3$ and assume that \eqref{eq:induction-gamma} is true for all $j\in\BRA{1,\ldots,k-1}$. We now study the $k$-th moment of $W$.
 Taking $(W^{(1)},W^{(2)})$ a couple of independent copies of $W$, by  \eqref{eq:fixed-point-X-only} and independence between $V$ and $(W^{(1)},W^{(2)})$, we have
\begin{align*}
\dE\SBRA{\ABS{W}^k}&\leq \dE\SBRA{\PAR{V^{a}\ABS{W^{(1)}}+(1-V)^{a}\ABS{W^{(2)}}}^k}\\
&\leq 2\dE[V^{ka}]\dE\SBRA{\ABS{W}^k}+\sum_{j=1}^{k-1}\binom{k}{j}\dE\SBRA{V^{ja}(1-V)^{(k-j)a}}\dE\SBRA{\ABS{W}^j}\dE\SBRA{\ABS{W}^{k-j}}.
\end{align*}
As $\dE[V^{ka}]=\frac{1}{1+ka}$, we deduce 
\begin{equation*}
\dE\SBRA{\ABS{W}^k}\leq \frac{ka+1}{ka-1}\sum_{j=1}^{k-1}\binom{k}{j}\dE\SBRA{V^{ja}(1-V)^{(k-j)a}}\dE\SBRA{\ABS{W}^j}\dE\SBRA{\ABS{W}^{k-j}},
\end{equation*}
which by induction immediately leads to
\begin{align*}
\frac{\dE\SBRA{\ABS{W}^k}}{k!} &\leq \frac{ka+1}{ka-1}\sum_{j=1}^{k-1}\frac{\Gamma(ja+1)\Gamma((k-j)a+1)}{\Gamma(ka+2)}\frac{\dE\SBRA{\ABS{W}^j}}{j!}\frac{\dE\SBRA{\ABS{W}^{k-j}}}{(k-j)!}\\
&\leq \frac{k-1}{ka-1}\frac{a}{2}\,\PAR{\frac{2}{a}}^{k-1}\frac{C^k}{\Gamma(ka+1)}\\
&\leq \PAR{\frac{2}{a}}^{k-1}\frac{C^k}{\Gamma(ka+1)},
\end{align*}
since for $k\geq 3$, $a(k-1)\leq 2ka-2$. 
 The lemma is proved.
\end{proof}

\begin{proof}[Proof of Theorem~\ref{thm:exp-moments}]
By Remark \ref{rk:loiLX}, if we have the result for $W$, we immediately obtain it for $L_q$.
By Lemma \ref{lem:GammaMomentsbound}, there exists $C>0$ such that for all $k\geq 1$,
\begin{equation*} 
\dE\SBRA{\ABS{W}^{k}}^{-1/k}
\geq 
\frac{a}{2C}
\PAR{\frac{2}{a}}^{1/k}\PAR{\frac{k!}{\Gamma(ka+1)}}^{-1/k}.
\end{equation*}
Recalling Stirling's formula $\Gamma(z)\underset{z\to +\infty}{\sim} \sqrt{\frac{2\pi}{z}}\PAR{\frac{z}{\ee}}^z$, we deduce 
\begin{equation*}
    \PAR{\frac{2}{a}}^{1/k}\PAR{\frac{k!}{\Gamma(ka+1)}}^{-1/k}
    \underset{k\to+\infty}{\sim} \frac{a^a \ee^{1-a}}{k^{1-a}}.
\end{equation*}
Since $1-a<1$, $\sum_{k\geq 1}1/k^{1-a}=\infty$, we conclude by Carleman's criterion.
In fact, the estimate of Lemma~\ref{lem:GammaMomentsbound} ensures that the radius of convergence of the Laplace series of $|W|$ is infinite.
\end{proof}

\begin{cor}
\label{cor:laplaceL}
    For any first step parameter $q\in[0,1]$ and memory parameter $p\in(3/4,1)$, the moment-generating functions of $L_q$ and $\vert L_q\vert$ are well defined and finite on $\dR$, meaning that for all $ t\in\dR$,
    \begin{equation*}
    \dE[\ee^{tL_q}]<\infty\quad \text{and} \quad \dE[\ee^{t|L_q|}]<\infty.
    \end{equation*}
\end{cor}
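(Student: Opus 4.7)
The plan is to read the corollary directly off the moment bound already established in Lemma~\ref{lem:GammaMomentsbound}, combined with the symmetry relation from Remark~\ref{rk:loiLX}. First, I would reduce both claims to a single statement about $|W|$: by Remark~\ref{rk:loiLX} one has $L_q\overset{\cL}{=}(2\xi_q-1)W$ with $|2\xi_q-1|=1$ a.s., so $|L_q|\overset{\cL}{=}|W|$. Since $\ee^{tL_q}\leq \ee^{|t|\cdot|L_q|}$, the finiteness of $\dE[\ee^{tL_q}]$ and of $\dE[\ee^{t|L_q|}]$ for every $t\in\dR$ both follow from the single estimate $\dE[\ee^{s|W|}]<\infty$ for every $s\geq 0$.

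Next, I would expand the moment-generating function of $|W|$ as a power series and interchange sum and expectation by monotone convergence. For $s\geq 0$,
\[
\dE[\ee^{s|W|}]=\sum_{k\geq 0}\frac{s^k}{k!}\dE\SBRA{|W|^k}.
\]
Plugging in the bound from Lemma~\ref{lem:GammaMomentsbound} gives
\[
\dE[\ee^{s|W|}]\leq \frac{a}{2}\sum_{k\geq 0}\frac{(2Cs/a)^k}{\Gamma(ka+1)},
\]
which, up to the prefactor $a/2$, is the Mittag-Leffler function $E_a$ of parameter $a>0$ evaluated at $2Cs/a$. Stirling's formula $\Gamma(ka+1)\sim \sqrt{2\pi ka}\,(ka/\ee)^{ka}$ shows that $\Gamma(ka+1)$ grows faster than any geometric sequence in $k$, so $E_a$ is entire as a function of its argument. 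Hence the right-hand side is finite for every $s\geq 0$, and both desired integrability statements follow from the reduction above by taking $s=|t|$.

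There is essentially no obstacle: all the analytical work has already been carried out in Lemma~\ref{lem:GammaMomentsbound}, whose conclusion $\dE[|W|^k]/k!=O\bigl(C^k/\Gamma(ka+1)\bigr)$ is precisely of Mittag-Leffler type, so global finiteness of the Laplace series is automatic. The proof is therefore essentially one line once the moment bound is in hand; the only point worth mentioning explicitly is the reduction $|L_q|\overset{\cL}{=}|W|$, which turns the two-sided problem for $L_q$ into the one-sided problem for $|W|$ that Lemma~\ref{lem:GammaMomentsbound} is tailored to.
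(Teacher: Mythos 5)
Your proof is correct and follows essentially the same route as the paper: reduce to $|W|$ via Remark~\ref{rk:loiLX}, interchange sum and expectation (Tonelli/monotone convergence), and conclude from the moment bound of Lemma~\ref{lem:GammaMomentsbound}, whose Mittag--Leffler-type growth makes the Laplace series of $|W|$ have infinite radius of convergence --- the very fact the paper invokes from the proof of Theorem~\ref{thm:exp-moments}. You merely make explicit (via Stirling and the entirety of the Mittag--Leffler function) the convergence step the paper states without detail, so there is nothing to add.
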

\begin{proof}
By Tonelli's theorem and Theorem~\ref{thm:moments}, for all $t\in\dR$, 
    \begin{equation*}
    \dE\SBRA{\ee^{t\ABS{L_q}}}=\sum_{k=0}^\infty \frac{ \dE[\ABS{tL_q}^k]}{k!} = \sum_{k=0}^\infty \frac{|t|^k\dE\SBRA{|W|^k}}{k!}<\infty. 
    \end{equation*}
    Then $\ds{\dE\SBRA{\ee^{tL_q}}=\sum_{k=0}^\infty \frac{ t^kL_q^k}{k!}}$ is absolutely convergent.
\end{proof}

Bercu noticed in \cite[Rem.~3.2]{Bercu2018} that $L_q$ is a sub-Gaussian random variable. We prove here that $L_q^2$ has finite exponential moments, this indicates that the distribution of $L_q$ has  really thin tails. 
\begin{cor}\label{cor:laplaceL^2}
     For any first step parameter $q\in[0,1]$ and memory parameter $p\in(3/4,1)$, the moment-generating function of $L_q^2$ is well defined and finite on $\dR$. 
\end{cor}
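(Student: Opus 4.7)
The plan is to bound the Taylor series of $t\mapsto \dE[\ee^{tL_q^2}]$ term-by-term, using the explicit factorial estimate from Lemma~\ref{lem:GammaMomentsbound}, and then to invoke Stirling's formula to show that the generic coefficient decays super-exponentially fast thanks to the assumption $a>1/2$.

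First, since $\ee^{tL_q^2}\leq 1$ for $t\leq 0$, it suffices to treat $t>0$. By Theorem~\ref{thm:exp-moments}, all even moments of $L_q$ are finite, and by Remark~\ref{rk:loiLX} we have $L_q^2\overset{\cL}{=}W^2$, so $\dE[L_q^{2k}]=\dE[W^{2k}]\leq\dE[|W|^{2k}]$. Expanding the exponential and invoking Tonelli's theorem, I would write
\begin{equation*}
\dE[\ee^{tL_q^2}]=\sum_{k\geq 0}\frac{t^k\dE[L_q^{2k}]}{k!}\leq \sum_{k\geq 0}\frac{t^k\dE[|W|^{2k}]}{k!}.
\end{equation*}

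Next, I would apply Lemma~\ref{lem:GammaMomentsbound} with the index $2k$, giving
\begin{equation*}
\frac{\dE[|W|^{2k}]}{(2k)!}\leq \frac{C^{2k}}{\Gamma(2ka+1)}\PAR{\frac{2}{a}}^{2k-1},
\end{equation*}
so that the generic term of the series is bounded by
\begin{equation*}
u_k(t):=\frac{a}{2}\PAR{\frac{4C^2}{a^2}}^{k}t^k\cdot\frac{(2k)!}{k!\,\Gamma(2ka+1)}.
\end{equation*}

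The core technical step is then to show that $u_k(t)\to 0$ faster than geometrically for every fixed $t>0$. I would use Stirling's formula $\Gamma(z+1)\sim\sqrt{2\pi z}(z/\ee)^z$ to derive
\begin{equation*}
\frac{(2k)!}{k!\,\Gamma(2ka+1)}\underset{k\to\infty}{\sim} \frac{1}{\sqrt{\pi k a}}\cdot\frac{4^k\,\ee^{-k}\,k^k\,\ee^{2ka-2k}}{(2ka)^{2ka}}\cdot\ee^{2k}=O(1)\cdot\PAR{\frac{4\,\ee^{2a-1}}{(2a)^{2a}}}^{k}\cdot k^{k(1-2a)},
\end{equation*}
up to lower-order polynomial factors. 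Applying the ratio test, or equivalently taking $k$-th roots, I get
\begin{equation*}
\limsup_{k\to\infty}u_k(t)^{1/k}=\limsup_{k\to\infty}\bigl(C'(t)\bigr)\cdot k^{1-2a}=0,
\end{equation*}
because $1-2a<0$. Thus $\sum_k u_k(t)<\infty$ for every $t\in\dR$, proving that $\dE[\ee^{tL_q^2}]$ is finite on the whole real line.

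The only mildly delicate point is the Stirling comparison in the middle step, but it is entirely elementary once the factors are grouped as above; the essential ingredient is really the inequality $1-2a<0$, which is exactly the superdiffusive assumption $p>3/4$. No further input is needed.
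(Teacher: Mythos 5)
Your proposal is correct and follows essentially the same route as the paper: bound $\dE[\ee^{tL_q^2}]$ by $\sum_k |t|^k\dE[W^{2k}]/k!$ and control the even moments via Lemma~\ref{lem:GammaMomentsbound}, so that everything reduces to the coefficients $\frac{(2k)!}{k!\,\Gamma(2ka+1)}$ and the superdiffusive condition $a>\frac{1}{2}$. The only difference is cosmetic: you conclude with Stirling's formula and the root test, whereas the paper applies the ratio test to the same coefficients; both computations are sound.
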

\begin{proof}
    For $t\in\dR$,
    \[
    0\leq \ee^{tL_q^2} = \sum_{k=0}^\infty \frac{t^kL_q^{2k}}{k!}\leq \sum_{k=0}^\infty \frac{|t|^k W^{2k}}{k!}.
    \]
    Hence for all $t\in\dR$, 
    \[
    \dE\SBRA{\ee^{tL_q^2}}\leq \sum_{k=0}^\infty \frac{|t|^k\dE\SBRA{W^{2k}}}{k!}.
    \]
    Hereafter, we have from Lemma \ref{lem:GammaMomentsbound} that
    \[
    \sum_{k=0}^\infty \frac{|t|^k\dE\SBRA{W^{2k}}}{k!} \leq 
    \frac{a}{2}\sum_{k=0}^\infty |t|^k\PAR{\frac{2C}{a}}^{2k}
    \frac{(2k)!}{k!\Gamma(2ka+1)}.
    \]
    Since $a>1/2$, we obtain that \[\frac{k!\Gamma(2ka+1)}{(2k)!}\frac{(2k+2)!}{(k+1)!\Gamma(2ka+2a+1)}\sim \frac{(2k)^2}{k (2ka)^{2a}}\sim \frac{2^{2(1-a)}}{a^{2a}k^{2a-1}} \to 0\]
    when $k$ goes to $\infty$, and thus the radius of convergence is infinite.
\end{proof}

Since we know that $L_1$ has finite moments of all order and its moment-generating function is well defined, we can now prove Theorem~\ref{thm:moments}. However, before giving the proof,  we make some comments on the result.
\begin{rk}
\label{rk:moments-L1}
From the recursive equation in Theorem~\ref{thm:moments}, we easily compute by induction 
    \[
    m_1=1, \quad m_2=\frac{a}{2a-1},\quad m_3=\frac{a+1}{2(2a-1)}\quad \text{and}\quad m_4=\frac{a(2a^2+2a-1)}{(4a-1)(2a-1)^2},
    \]
    and then the successive moments of $L_1\overset{\cL}{=}W$, are given by
    \begin{align*}
    \dE[L_1]&=\frac{1}{\Gamma(a+1)},\quad &\dE[L_1^2]&=\frac{1}{(2a-1)\Gamma(2a)},\quad\\
    \dE[L_1^3]&=\frac{a+1}{a(2a-1)\Gamma(3a)},\quad  &\dE[L_1^4]&=\frac{6(2a^2+2a-1)}{(4a-1)(2a-1)^2\Gamma(4a)}.
    \end{align*}
  We recover the expressions obtained by Bercu in \cite{Bercu2018}. 
\end{rk}

Since $a\mapsto m_2(a)=\frac{a}{2a-1}$ is positive and decreasing on $(1/2,1)$, we have the following  trivial corollary of Theorem~\ref{thm:moments}.
\begin{cor}
The following holds:
\begin{enumerate}
    \item For all $ k\geq 1$, $\dE[L_1^k]$ is positive. 
    \item For all $k\geq 2$, $a\mapsto m_k(a)$ is decreasing on $(1/2,1)$  with 
    \[\lim_{a\to 1/2}m_k(a)=\infty \quad \text{ and }\quad \lim_{a\to 1}m_k(a)=1.
    \]
Consequently, for all $ k\geq 1$, $\displaystyle{\lim_{a\to 1}\dE\bigl[\ABS{L_1-1}^k\bigr]}=0$.
\item Since $L_q\overset{\cL}{=}(2\xi_q-1)L_1$ with $\xi_q\sim\cB(q)$ independent of $L_1$ (see Remark~\ref{rk:loiLX}), we have for all $k\geq 1$ and $t\in\dR$,
\begin{align*}
    \dE[L_q^k]&=\PAR{q+(1-q)(-1)^k}\dE[L_1^k],\\
\dE\SBRA{\ee^{tL_q}}&=\sum_{k\geq 0}\frac{m_k}{\Gamma(ka+1)}\PAR{q+(1-q)(-1)^k}t^k.
\end{align*}
\end{enumerate}
\end{cor}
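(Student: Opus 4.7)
The plan is to prove the three items via induction on the recursion~\eqref{eq:recursion_moments}, combined with Theorem~\ref{thm:moments} and Remark~\ref{rk:loiLX}. For item 1, strong induction suffices: $m_1=1>0$, and for $k\geq 2$, $a\in(1/2,1)$ one has $c_k\leq 1<2a\leq ka$, so $ka-c_k>0$ and \eqref{eq:recursion_moments} expresses $m_k$ as a ratio of positive quantities. Positivity of $\dE[L_1^k]$ then follows from \eqref{eq:L_moments}.

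For item 2, I would prove the monotonicity by induction on $k\geq 2$, distinguishing the parities. When $k$ is odd, pairing $j$ with $k-j$ in \eqref{eq:recursion_moments} (using $c_j+c_{k-j}=1+a$, since exactly one of the two indices is odd) yields the compact form
\begin{equation*}
m_k(a)=\frac{1+a}{(k-1)a}\sum_{j=1}^{(k-1)/2} m_j(a)\,m_{k-j}(a),
\end{equation*}
a product of a positive decreasing prefactor and a sum of products of positive decreasing terms, hence decreasing. When $k$ is even, differentiating $(ka-1)m_k=\sum_{j=1}^{k-1} c_j m_j m_{k-j}$ and using the symmetry $j\leftrightarrow k-j$ gives
\begin{equation*}
(ka-1)m_k'(a)=-km_k+O+2\sum_{j=1}^{k-1} c_j m_j'(a)m_{k-j}(a),\qquad O:=\sum_{\substack{1\leq j\leq k-1\\ j\text{ odd}}} m_j m_{k-j},
\end{equation*}
where $O$ arises from $c_j'(a)=\ind_{\{j\text{ odd}\}}$. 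The recursion itself splits as $E+aO=(ka-1)m_k$ with $E\geq 0$, so $O\leq(k-1/a)m_k$; combined with $m_j'(a)\leq 0$ (induction hypothesis, with $m_1'\equiv 0$), this yields $(ka-1)m_k'(a)\leq -m_k/a<0$. The limit $m_k(a)\to\infty$ as $a\to 1/2^+$ follows by induction from $m_k\geq a\,m_{k-1}/(ka-c_k)$ (keeping only the $j=1$ term of the recursion, whose denominator stays bounded), with base case $m_2=a/(2a-1)\to\infty$. The limit $m_k(a)\to 1$ as $a\to 1^-$ follows from the continuity of the rational expression for $m_k$ at $a=1$; substituting $a=1$ (where $c_j\equiv 1$) in the recursion gives $m_k(1)=(k-1)^{-1}\sum_{j=1}^{k-1} m_j(1)m_{k-j}(1)=1$ by induction. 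The "consequently" statement then follows from the binomial expansion: since $\dE[L_1^j]=(j-1)!\,m_j(a)/(a\Gamma(ja))\to 1$ as $a\to 1$, one has $\dE[(L_1-1)^{2k}]=\sum_{j=0}^{2k}\binom{2k}{j}(-1)^{2k-j}\dE[L_1^j]\to(1-1)^{2k}=0$, and the Cauchy-Schwarz inequality gives $\dE[\ABS{L_1-1}^k]\leq\sqrt{\dE[(L_1-1)^{2k}]}\to 0$.

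For item 3, by Remark~\ref{rk:loiLX}, $L_q\overset{\cL}{=}(2\xi_q-1)L_1$ with $\xi_q\sim\cB(q)$ independent of $L_1$; a direct case analysis on $2\xi_q-1\in\{-1,+1\}$ gives $\dE[(2\xi_q-1)^k]=q+(1-q)(-1)^k$, and independence yields the moment formula. For the moment generating function, conditioning on $\xi_q$ gives $\dE[\ee^{tL_q}]=q\,\dE[\ee^{tL_1}]+(1-q)\,\dE[\ee^{-tL_1}]$; substituting the series from Theorem~\ref{thm:moments} termwise (valid thanks to the absolute convergence established there) produces the stated expression. The main obstacle is the monotonicity for $k$ even, where the factor $a$ inside $c_j$ for odd $j$ blocks a clean "product of decreasing functions" argument and forces passage through the differentiated recursion together with the sharp bound $O\leq(k-1/a)m_k$ extracted from the original (undifferentiated) recursion itself.
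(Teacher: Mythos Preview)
Your proof is correct. The paper itself provides no proof for this corollary beyond the single sentence preceding it: it declares the result a ``trivial corollary of Theorem~\ref{thm:moments}'' and points only to the base case $m_2(a)=a/(2a-1)$ being positive and decreasing. Your inductive argument fills in every detail the paper elides.

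In particular, you correctly flag that the monotonicity of $m_k$ for \emph{even} $k$ is not as immediate as the odd case: the factor $c_j=a$ for odd $j$ is increasing in $a$, which blocks a naive ``product of positive decreasing functions'' argument. Your route through the differentiated recursion, combined with the sharp bound $O\leq(k-1/a)m_k$ extracted from the undifferentiated recursion via the nonnegative even part $E$, is a clean resolution. The pairing trick $c_j+c_{k-j}=1+a$ for odd $k$ is also the natural way to make that case transparent. Items~1 and~3 are routine, and your treatment of the ``consequently'' clause via $\dE[L_1^j]\to 1$ and Cauchy--Schwarz is exactly what is needed.
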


\begin{proof}[Proof of Theorem \ref{thm:moments}]

Since $L_1$ is solution of the fixed-point equation \eqref{eq:fixed-point-X-only}, using the same notations as in Theorem~\ref{thm:fixed-point-dim1}, by independence between the variables, we have for $\mu_k:= \dE[W^k]$,
\begin{align*}
    \mu_k&=\dE\SBRA{\sum_{j=0}^k\binom{k}{j}(1-V)^{ja}V^{(k-j)a}(2\xi_p-1)^j\PAR{L_1^{(2)}}^j\PAR{L_1^{(1)}}^{k-j}}\\
    &=\sum_{j=0}^k\binom{k}{j}\dE\SBRA{(1-V)^{ja}V^{(k-j)a}}\dE\SBRA{(2\xi_p-1)^j}\mu_j\mu_{k-j}\\
    &=\sum_{j=0}^k\binom{k}{j}\frac{\Gamma(ja+1)\Gamma((k-j)a+1)}{\Gamma(ka+2)}c_j\mu_{j}\mu_{k-j},
\end{align*}
with $c_j=1$ for even $j$ and $c_j=a$ for odd $j$.
Thus
\begin{align*}
    \frac{ka-c_k}{ka+1}\mu_k
    &=\sum_{j=1}^{k-1}\binom{k}{j}\frac{\Gamma(ja+1)\Gamma((k-j)a+1)}{\Gamma(ka+2)}c_j\mu_{j}\mu_{k-j}.
\end{align*}
We introduce $m_k$ such that $\mu_k=\frac{k!}{\Gamma(ka+1)}m_k$. We thus deduce that $m_k$ satisfies the recursive equation: $m_1=1$, and for $k\geq 2$,
\begin{equation*}
(ka-c_k)k!m_k=\sum_{j=1}^{k-1} k!c_jm_{j}m_{k-j},
\end{equation*}
which implies \eqref{eq:recursion_moments}.
Theorem \ref{thm:moments} is proved. \end{proof}

\subsection{Approximation of the density}\label{sec:approx-density}

The aim of this part is to propose a few approximations of the density of $L_q$, as illustrated in Figure~\ref{fig:simulations}.
\begin{figure}[ht!]
    \centering
    \includegraphics[width=5cm]{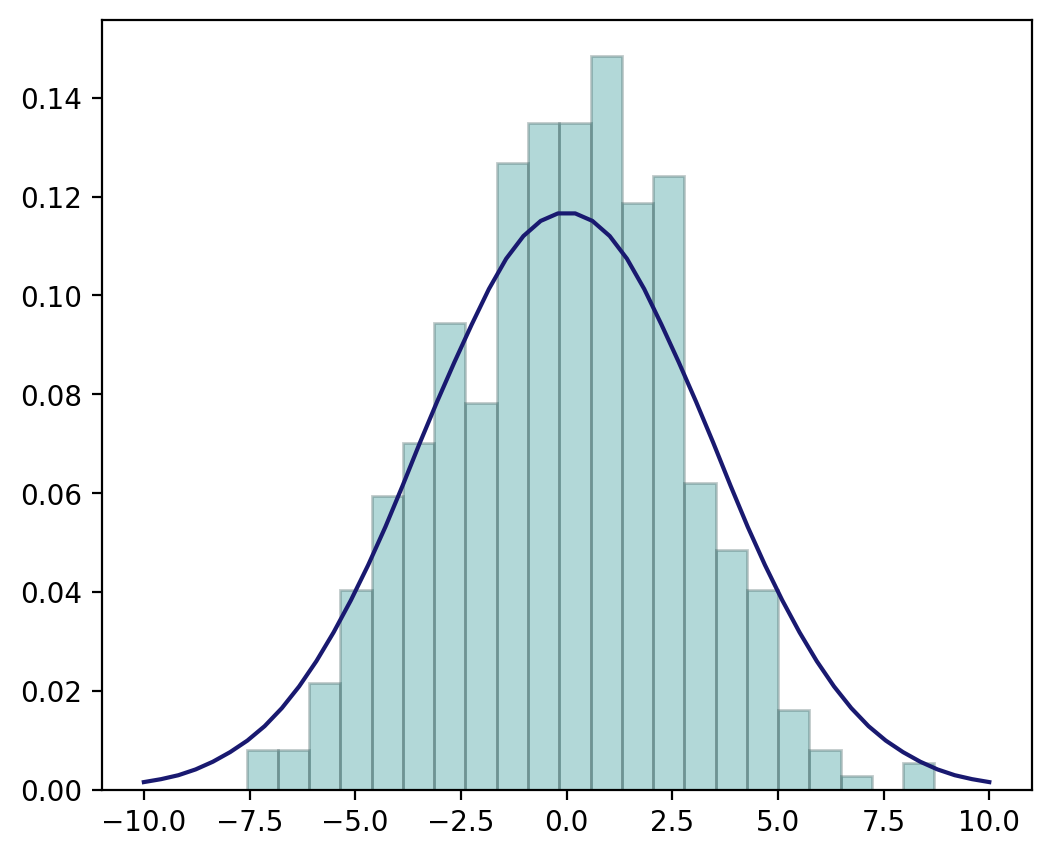}
    \includegraphics[width=5cm]{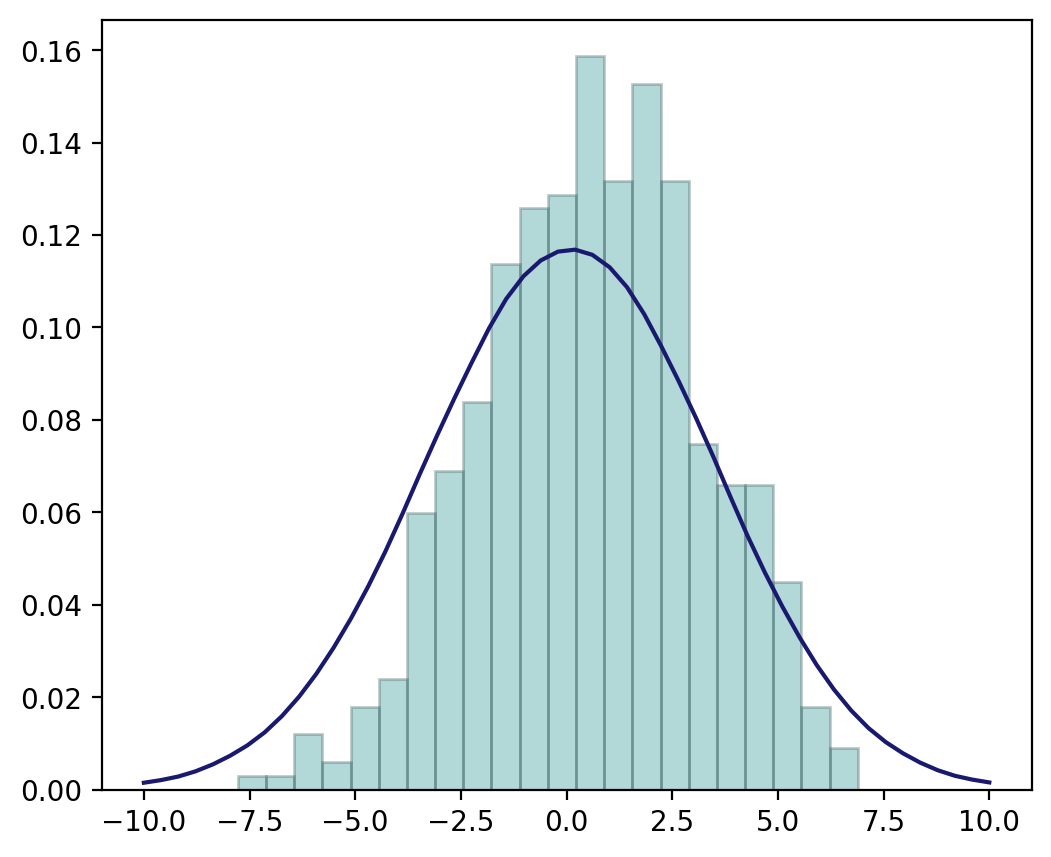}
    \includegraphics[width=5cm]{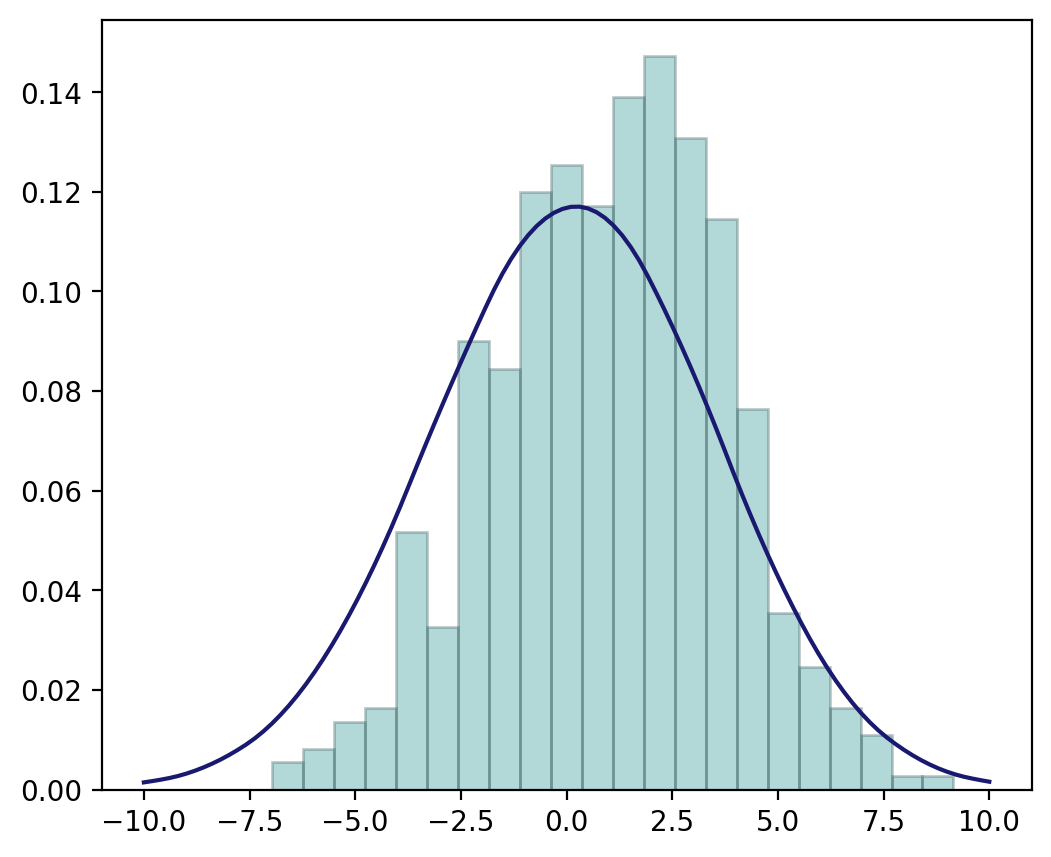}\\\medskip
    \includegraphics[width=5cm]{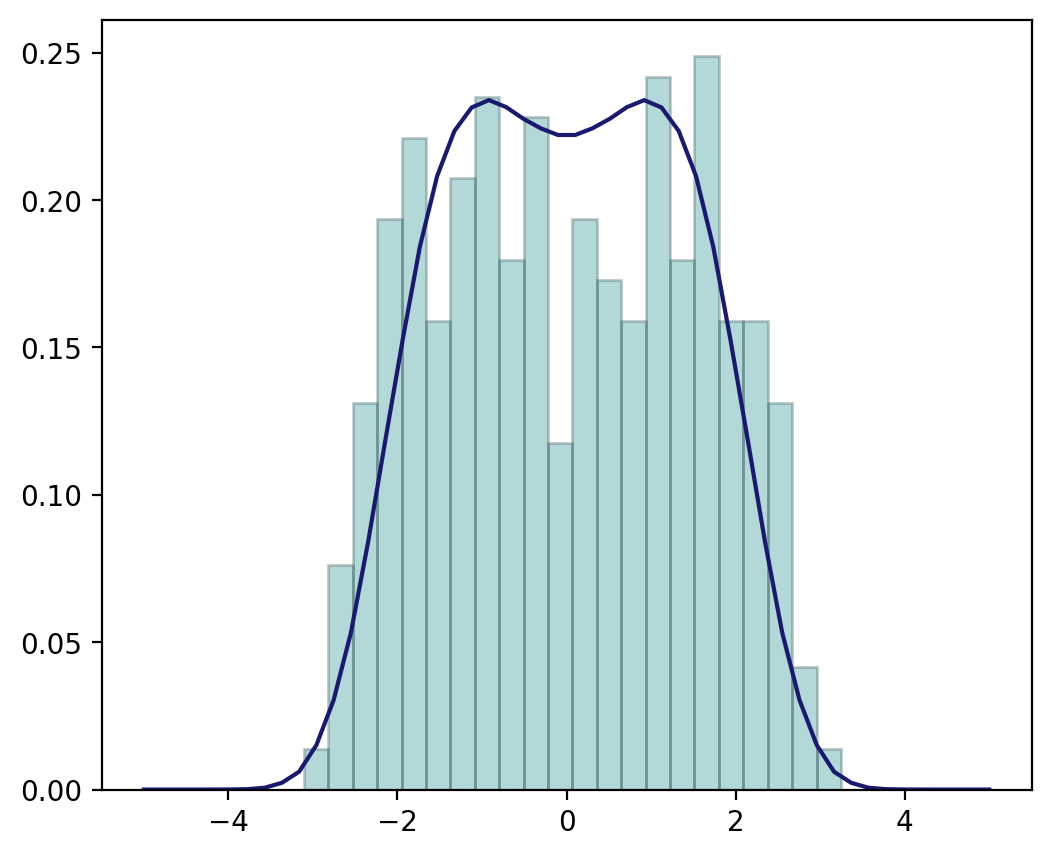}
    \includegraphics[width=5cm]{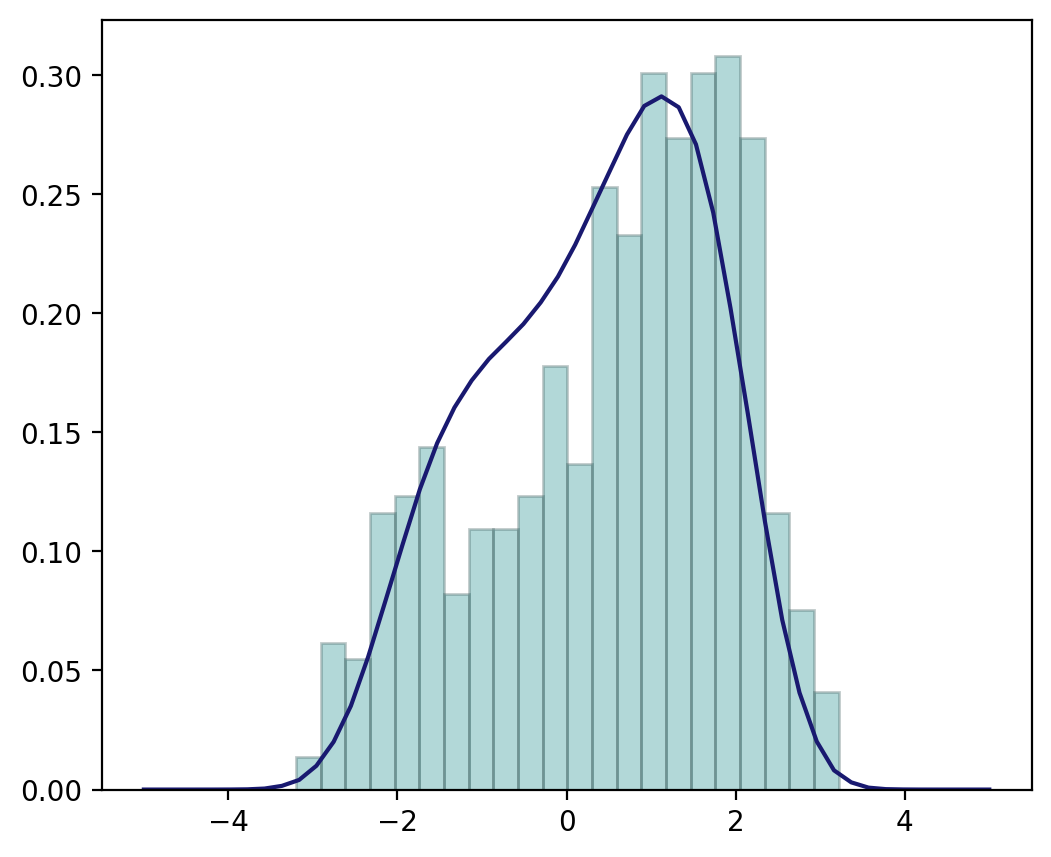}
    \includegraphics[width=5cm]{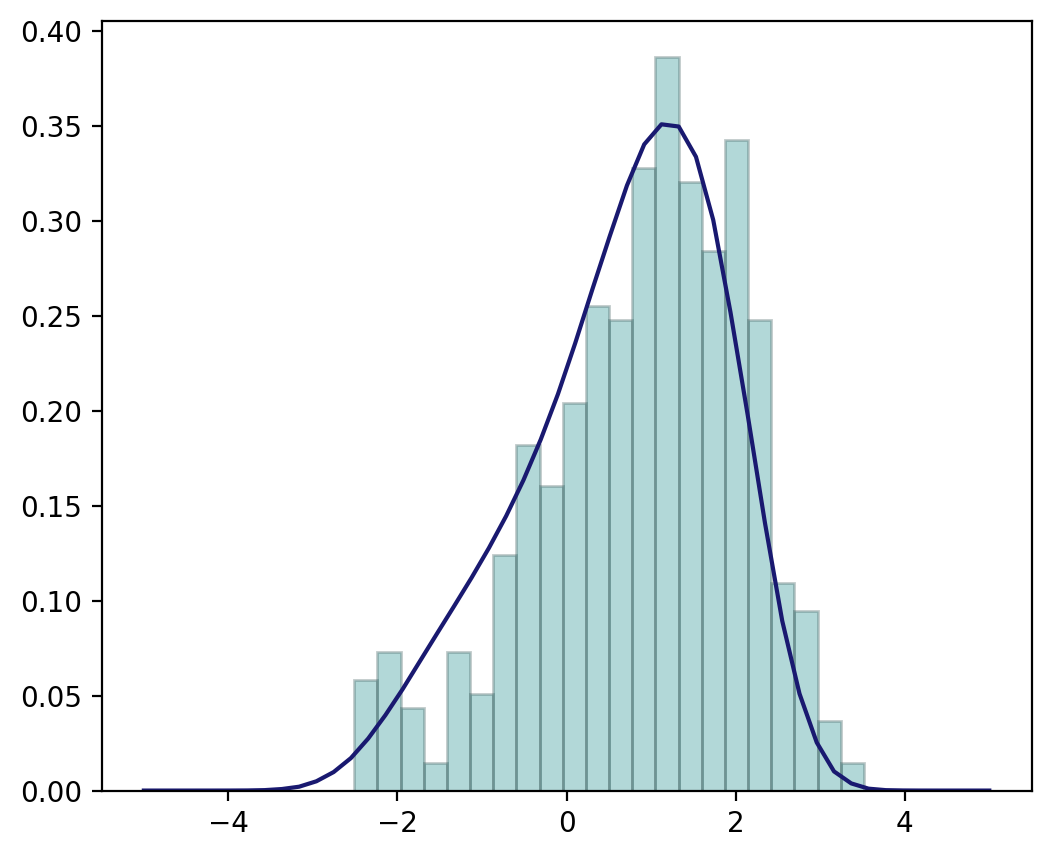}\\\medskip
    \includegraphics[width=5cm]{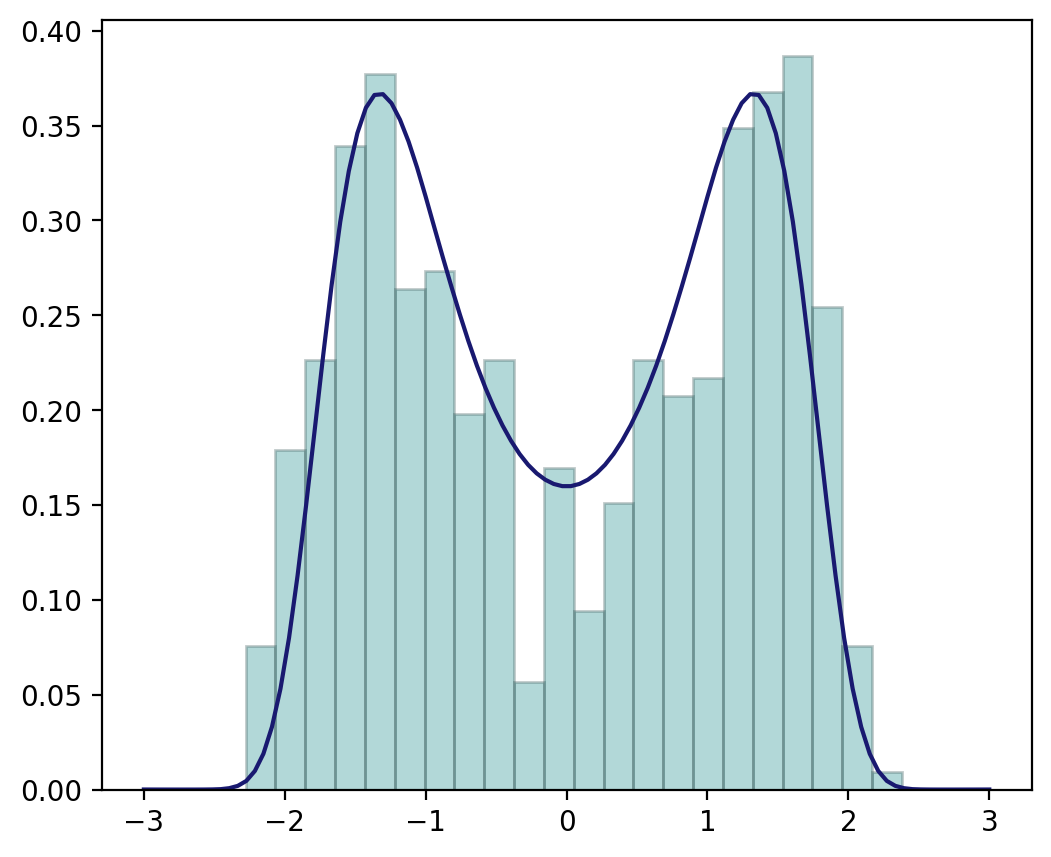}
    \includegraphics[width=5cm]{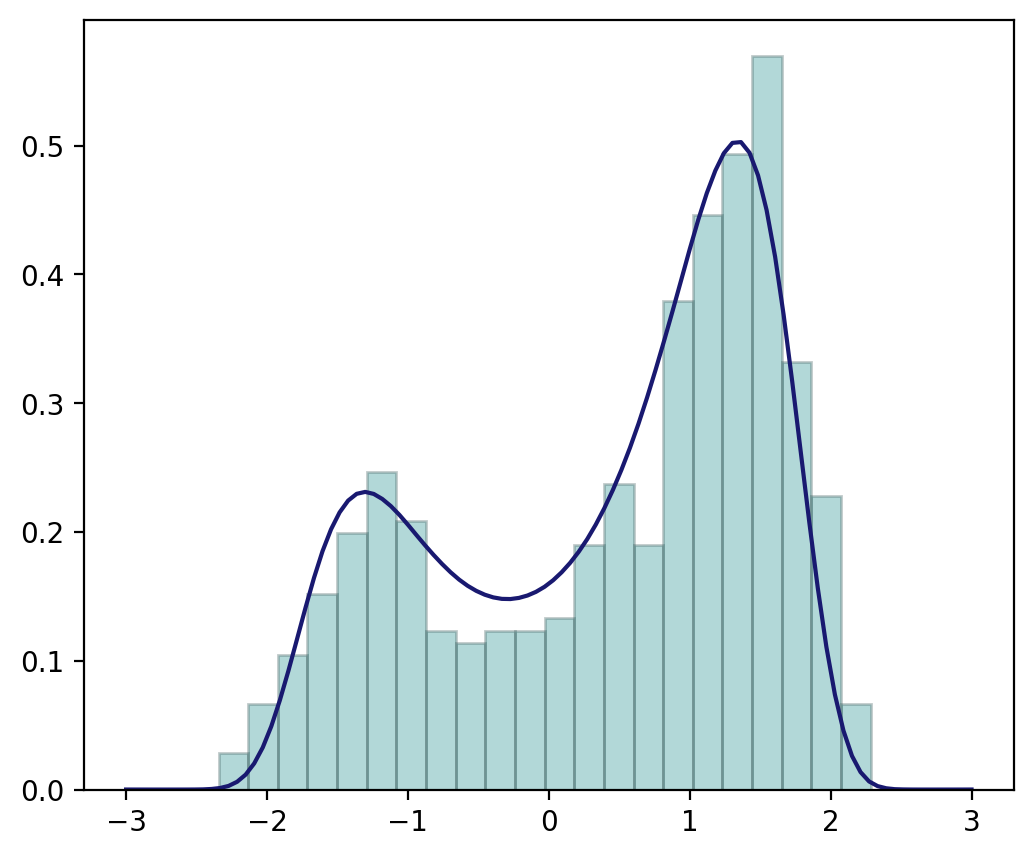}
    \includegraphics[width=5cm]{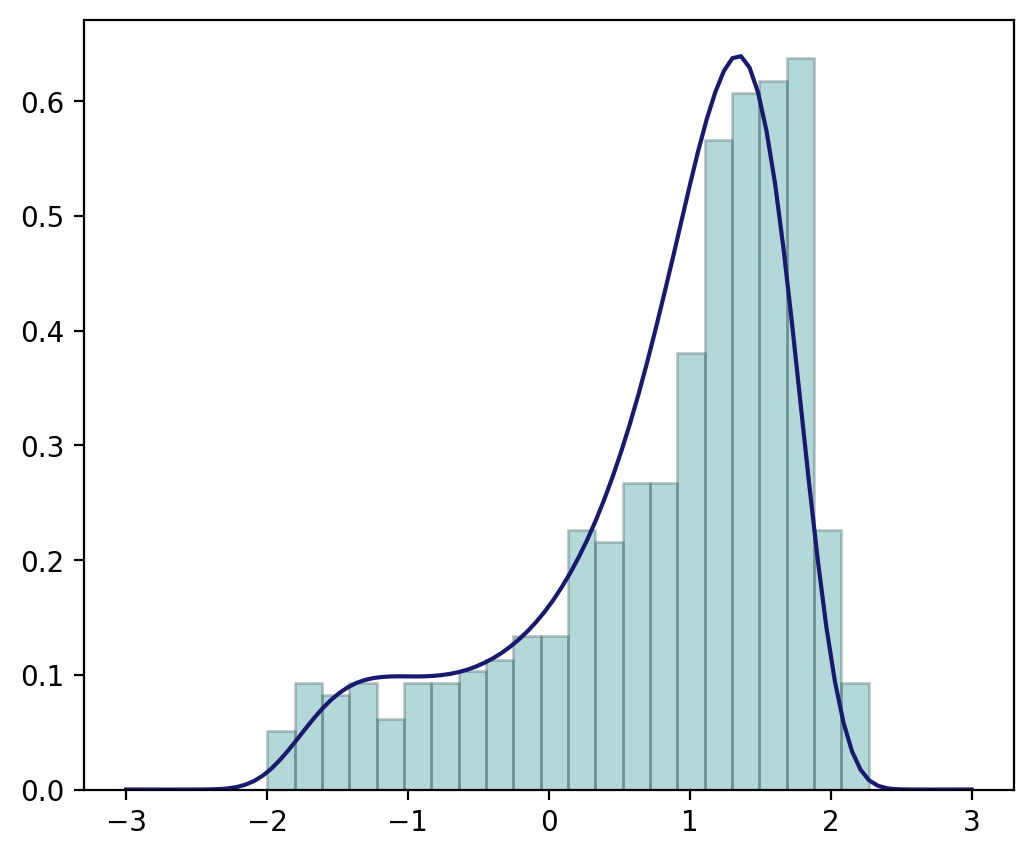}
    \caption{Histogram and method of moments approximation of  the density (blue lines) of $L_q$. First (resp.\ second, third) line: $p=0.77$ (resp.\ $p=0.87$, $p=0.92$). First (resp.\ second, third) column: $q=0.5$ (resp.\ $q=0.7$, $q=0.9$).}
    \label{fig:simulations}
\end{figure}

Each display contains a histogram of the distribution of $L_q$ as well as an approximation of the plot of the density of $L_q$. For each of the nine pictures, to obtain the histogram we have simulated $500$ paths of ERW of length $10^4$.

In order to obtain the approximated curves of the density, we make use of a classical method to approximate numerically a
measure knowing its moments, as presented in  \cite{Ga-04}. More specifically, given a finite number of moments (in our case, $300$, quickly obtained from the recurrence~\eqref{eq:recursion_moments}), we may introduce the so-called Hankel matrix, and then there is a canonical way to compute the associated Jacobi matrix. The eigenvalues and eigenvectors of this Jacobi matrix give an approximation of the density by a finite sum of Dirac measures. As a last step, we may consider a smooth (in our case, cubic) interpolation of this purely atomic measure, which yields the graphs of Figure~\ref{fig:simulations}. We kindly thank the authors of the paper \cite{ChGaKaVe-23}, in particular Slim Kammoun, for providing us with an effective code.

\subsection{A cluster decomposition of the limit variable and its relations with the computation of moments}\label{sec:randomtrees}

In this part, we first state a remarkable decomposition of the limit variable $L_q$, which is obtained using the interpretation of ERW in terms of random recursive trees and results on cluster sizes after percolation. Although this result does not clearly appear in the literature, it follows directly from arguments in \cite{Baur2015,Businger2018}.
\begin{prop}[\cite{Baur2015,Businger2018}]
\label{prop:rec-trees-dim1}
For $q\in[0,1]$ and $a=2p-1>\frac{1}{2}$, the ERW asymptotic satisfies
    \begin{equation}
    \label{eq:L-sum_details}
L_q=C_1Z_1^{(q)}+\sum_{j=2}^{\infty}C_jZ_j \quad\text{a.s.}
\end{equation}
where 
\begin{itemize}
    \item $C_1$ is a Mittag-Leffler random variable with parameter $a$ and $Z_1^{(q)}$ has Rademacher distribution with parameter $q$, independent of $C_1$; 
     \item $(Z_j)_{j\geq 2}$ are independent  Rademacher random variables with parameter $1/2$, independent of $Z_1^{(q)}$;
    \item $(C_j)_{j\geq 2}$ are  dependent random variables such that for any $j\geq 2$, $C_j\overset{\cL}{=}C_{1,j}\PAR{\beta_{\tau_j}}^a $, with
    \begin{itemize}
    \item $C_{1,j}$ is a Mittag-Leffler random variable with parameter $a$;
    \item $(\beta_k)$ denote independent Beta random variables with parameters $(1,k-1)$;
    \item $\tau_j-1$ are dependent negative binomial random variables with parameters $(j-1,1-a)$, such that $\tau_j = \tau_{j-1}+G_j$ and $G_j$ has geometric distribution of parameter $1-a$ and is independent of $\tau_{j-1}$;
    \item for each $j\geq 2$, the random variables  $C_{1,j}, \beta_{\tau_j}$ are independent.
    \end{itemize}
    \item The random variables $(Z_j)_{j\geq 1}$ are independent of the random variables $(C_j)_{j\geq 1}$.
\end{itemize}

\end{prop}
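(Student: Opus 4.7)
The plan is to combine the random-recursive-tree encoding of the ERW (Kürsten \cite{Kursten2016}, Businger \cite{Businger2018}) with the percolation scaling limits of Baur \cite{Baur2015}. First, one realizes $S_n$ on a random recursive tree $T_n$ on $\{1,\ldots,n\}$, where each $i\geq 2$ picks a uniform parent in $\{1,\ldots,i-1\}$ and each edge receives an independent sign $\varepsilon_i\in\{\pm 1\}$ with $\dP(\varepsilon_i=+1)=p$; then $X_i = X_1\prod_e \varepsilon_e$ along the tree path from $i$ to $1$, so that $S_n=\sum_{i=1}^n X_i$.

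Next, exploit the identity $p = a + (1-a)/2$ to split each edge into a \emph{good} edge (probability $a$, sign $+1$) or a \emph{broken} edge (probability $1-a$, with an independent uniform sign $\eta$). This realizes Bernoulli($a$) bond percolation on $T_n$: let $C_{(1)}\ni 1, C_{(2)}, C_{(3)},\ldots$ be the resulting clusters, indexed by their increasing minimal labels $1=\tau_1<\tau_2<\cdots$. Within $C_{(j)}$, all $X_i$ coincide with a common value $\sigma_{(j)}$; writing $\pi(j)$ for the parent cluster of $C_{(j)}$, one has $\sigma_{(1)}=X_1\sim\cR(q)$ and $\sigma_{(j)}=\sigma_{(\pi(j))}\eta_j$ for $j\geq 2$. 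The key algebraic observation is that the map $(\eta_j)_{j\geq 2}\mapsto(\sigma_{(j)}/X_1)_{j\geq 2}$ is an upper-triangular bijection with unit diagonal on $(\{\pm 1\}^\dN,\cdot)$ and thus preserves the uniform law, so $(\sigma_{(j)}/X_1)_{j\geq 2}$ is i.i.d.\ $\cR(1/2)$ and independent of $X_1$ and of the tree/percolation. A short check then shows that $X_1$ and the $\sigma_{(j)}$ for $j\geq 2$ are jointly independent, yielding the finite-$n$ identity
\begin{equation*}
    S_n = |C_{(1)}|X_1 + \sum_{j\geq 2}|C_{(j)}|\sigma_{(j)}.
\end{equation*}

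Third, identify the limits of $|C_{(j)}|/n^a$ via Baur's scaling results \cite{Baur2015}. The root cluster of Bernoulli($a$)-percolation gives $|C_{(1)}|/n^a\to C_1$ a.s., with $C_1$ Mittag-Leffler of parameter $a$. The broken-edge times $\tau_j$ form a Bernoulli($1-a$) process, so $\tau_j-\tau_{j-1}$ is $\mathrm{Geom}(1-a)$ and $\tau_j-1\sim\mathrm{NegBin}(j-1,1-a)$. Conditionally on $\tau_j$, the subtree rooted at $\tau_j$ is itself a random recursive tree whose relative size converges a.s.\ to $\beta_{\tau_j}\sim\mathrm{Beta}(1,\tau_j-1)$ by the classical Pólya-urn argument, and the root cluster of the induced percolation on this subtree scales like its size to the power $a$ times an independent Mittag-Leffler $C_{1,j}$; hence $|C_{(j)}|/n^a\to C_{1,j}\beta_{\tau_j}^{\,a}$, which matches the description of $C_j$ in the statement.

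The main obstacle is the passage to the limit in the infinite sum. I would truncate at level $J$ and use the orthogonality of the $\sigma_{(j)}$'s together with the bound $\dE[(S_n/n^a)^2]\to\dE[L_q^2]$ from Theorem~\ref{thm:Janson-dim1} to control
\begin{equation*}
    \sum_{j>J}\dE\bigl[(|C_{(j)}|/n^a)^2\bigr] \;\xrightarrow[J\to\infty]{}\; 0 \quad\text{uniformly in } n.
\end{equation*}
This lets the truncated identity pass to the $L^2$ limit as $n\to\infty$; Kolmogorov's theorem applied to the independent-summand series $\sum_{j\geq 2} C_{1,j}\beta_{\tau_j}^{\,a}\sigma_{(j)}$ (conditionally on the cluster sizes) then upgrades $L^2$ convergence to almost-sure convergence, yielding \eqref{eq:L-sum_details}. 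This tail control is the only genuine analytic difficulty; the algebraic and combinatorial parts are an assembly of the cited tree-percolation machinery.
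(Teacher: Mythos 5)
Your route is the same as the paper's: encode the ERW on a random recursive tree with signed edges, use the splitting $p=a+\frac{1-a}{2}$ to superpose a Bernoulli($a$) bond percolation whose clusters carry a common sign, invoke the cluster-size asymptotics of \cite{Baur2015,Businger2018} to get the Mittag--Leffler and Beta--Mittag--Leffler limits, and pass to the limit in $S_n=\sum_j \vert c_n(\tau_j)\vert Z_j$. This is exactly the compilation the paper performs (it quotes the decomposition from \cite{Kursten2016,Businger2018} and the cluster asymptotics from \cite{Baur2015}, leaving details to the reader); your triangular-bijection argument showing that the cluster signs are i.i.d.\ $\cR(1/2)$, independent of $X_1$ and of the tree and percolation, and your identification of the law of $\tau_j$ and of $C_j\overset{\cL}{=}C_{1,j}\beta_{\tau_j}^a$, are correct reconstructions of those cited steps.

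The one place where your sketch is not yet a proof is the interchange of $n\to\infty$ with the infinite sum, and your proposed justification is circular. Writing $a_{j,n}=\dE\bigl[(\vert c_n(\tau_j)\vert/n^a)^2\bigr]$, orthogonality of the signs gives $\sum_j a_{j,n}=\dE[(S_n/n^a)^2]$, and Fatou then only yields $\sum_j \dE[C_j^2]\leq \dE[L_q^2]$; the uniform-in-$n$ tail control $\sum_{j>J}a_{j,n}\to 0$ is equivalent (by a Scheff\'e-type argument) to \emph{equality} here, i.e.\ to the assertion that no second-moment mass escapes to large $j$ --- which is essentially the $L^2$ form of the identity you are trying to prove, so it cannot be extracted from ``orthogonality plus convergence of $\dE[(S_n/n^a)^2]$'' alone. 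The step can be repaired either by computing $\sum_j\dE[C_j^2]$ explicitly (Mittag--Leffler and Beta moments combined with the negative binomial law of $\tau_j$) and checking it equals $\dE[L_q^2]$, or, more robustly, by a domination that is uniform in $n$: conditioning on the subtree rooted at $\tau_j$, the root cluster of the percolated subtree has second moment of order (subtree size)$^{2a}$ when $a>1/2$, and the P\'olya-urn bound for the subtree proportion gives $\dE\bigl[(\text{subtree size}/n)^{2a}\bigr]\leq C\,\dE[\tau_j^{-2a}]\leq C j^{-2a}$ uniformly in $n$, which is summable because $2a>1$. With such a bound your truncation/$L^2$ argument and the Kolmogorov step go through, and since $S_n/n^a\to L_q$ a.s., the limit of the series is identified with $L_q$, giving \eqref{eq:L-sum_details}. (A minor point: Theorem~\ref{thm:Janson-dim1} states the second moment of the limit, not the convergence $\dE[(S_n/n^a)^2]\to\dE[L_q^2]$; the latter is true via $L^2$-boundedness of the underlying martingale, but should be attributed accordingly.)
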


\begin{proof}
The proof follows from compiling several results on the ERW and the cluster size of random recursive trees (RRT)\ with Bernoulli bond percolation, see \cite{Businger2018} for the explicit construction and \cite{Bertoinc2021} for the definition of the ERW as a step-reinforced random walk. We give here the main arguments, but leave the details to the reader. Let $\mathcal{T}_n$ be a RRT of size $n$, and denote by $T_i$ the subtree of $\mathcal{T}_n$ rooted at $i$ after Bernoulli bond percolation (each edge is kept with probability $a$). In particular, we have a forest after percolation.  
It is known from \cite[Lem.~3.3]{Baur2015}, see also \cite[Lem.~4 and Lem.~5]{Businger2018},
that
\begin{equation*}
    \lim_{n\to\infty} n^{-a}\#\big\{j\leq n:\ j\in T_1\big\} = C_1\quad\text{a.s.},
\end{equation*}
where $C_1$ has a Mittag-Leffler distribution with parameter $a$. More generally, for $i\geq 1$,
\begin{equation*}
    \lim_{n\to\infty} n^{-a}\#\big\{j\leq n:\ j\in T_i\big\} = C_i>0 \quad\text{a.s.},
\end{equation*}
where $C_i$ has the same law as $(\beta_i)^a \cdot C_{1,i}$, where $\beta_i$ denotes a Beta variable with parameter $(1,i-1)$  and $C_{1,i}$ is a Mittag-Leffler variable with parameter $a$ independent of $\beta_i$.

Hereafter, we use the connection with the ERW to write \cite{Kursten2016,Businger2018}
\begin{equation}
\label{Sn-RRT-cluster}
S_n = \sum_{j=1}^{\infty}\vert c_n(\tau_j)\vert Z_j,
\end{equation}
where
\begin{itemize}
    \item $Z_1 \sim \cR(q)$, $(Z_j)_{j\geq2}$ is a sequence of i.i.d.\ random variables with $\cR(1/2)$-distribution;
    \item $c_n(j) = \big\{j\leq n:\ j\in T_i\big\}$ is the cluster rooted at $j$ of the RRT associated with the ERW after percolation and $|c_n(j)|$ is its size;
    \item $(\tau_j-1)_{j\geq 1}$ are dependent negative binomial random variables with respective parameters $(j-1,1-a)$, such that $\tau_j = \tau_{j-1}+G_j$, where $G_j$ has geometric distribution of parameter $1-a$ and is independent of $\tau_{j-1}$.
\end{itemize}
The $(\tau_j)_{j\geq 1}$ are here to ensure that we do not count more than once the steps of the ERW (i.e., that we are looking that the roots of the trees after percolation or at the $j$-th tree of the forest).
Gathering all of the above, we find that when $a>1/2$ both $S_n/n^a$ and $\vert c_n(\tau_j)\vert/n^a$ converge (for all $j$) and the result is proved.
\end{proof}

An application of the previous result is that the cluster decomposition \eqref{eq:L-sum_details} allows us to obtain a new proof of the recursion for the moments obtained in Theorem~\ref{thm:moments}. More precisely:

\begin{rk}
    Even though each $C_j$ in \eqref{eq:L-sum_details} can be written in distribution as a product of a Mittag-Leffler variable of parameter $a$ with an independent variable,  and even though Equation \eqref{eq:L_moments} satisfied by the moments of $L_q$ suggests a similar factorization, the asymptotic variable $L_q$ cannot be written as the product of a Mittag-Leffler variable of parameter $a$ with an independent variable having moments $(m_k)_{k\geq 0}$. The reason is that the sequence of $(m_k)_{k\geq 0}$, defined by the recursive equation~\eqref{eq:recursion_moments}, does not correspond to the sequence of moments of a random variable, since the associated Hankel matrix is not positive. The factorization of the variables $C_j$ is just an equality in law, and the choice of $C_{1,j}$ depends on $j$.
\end{rk}

\section{The multidimensional Elephant random walk}
\label{S-HD}

\subsection{Definition and first properties of the process}
\label{subsec:MERW}

Let $d\geq 1$. All vectors $\bw=(w_j)_{1\leq j\leq d}$ in this section are seen as column vectors. For vectors $\bw,\bw'\in \dR^d$, $\bw^T$ stands for the transpose vector of $\bw$, $\SCA{\bw,\bw'}$ denotes the scalar product and $\NRM{\bw}$ is the Euclidean norm.

The multidimensional ERW (MERW) $(\bS_n)_{n\geq0}$ on $\dZ^d$ is defined as follows. Let $(\be_i)_{1\leq i\leq d}$ be the canonical basis of $\dR^d$. At time zero, $\bS_0 = 0$ and at time $n = 1$, the elephant moves in one of the $2d$ directions $(\pm \be_i)_{1\leq i\leq d}$ according to a probability vector $q=(q_1,\ldots, q_{2d})$, meaning that
\begin{equation}
    \label{eq:first_value_MERW}
    \dP(\bS_1=\be_k)=q_{2k-1}\quad \text{and}\quad \dP(\bS_1=-\be_k)=q_{2k},\quad \forall k\in\BRA{1,\ldots,d}.
\end{equation} 
Afterwards, at time $n+1 \geq 2$,  the elephant chooses uniformly at random an integer $k$ among the previous times $1,\ldots,n$. 
Then, it repeats exactly its $k$-th step  with probability $p$,  or it moves  uniformly in one of the $2d-1$ remaining directions, i.e., with the same probability $\frac{1-p}{2d-1}$, where the parameter $p$ stands for the memory parameter of the MERW. The position of the elephant at time $n+1$ is given by
\begin{equation*}
    \bS_{n+1}=\bS_n+\bX_{n+1},
\end{equation*}
with $\bX_{n+1}$ being defined as the step of this random walk at time $n+1$, and satisfying
\begin{equation*}
    \bX_{n+1}=A_{n+1}\bX_{\cU(n)}.
\end{equation*}
Here $\cU(n)$ is a uniform variable on $\BRA{1,\ldots,n}$, and  $(A_{n})_{n\geq 1}$ represents a sequence of i.i.d.\ random matrices, independent of $\cU(n)$, given by
\begin{equation*}
    \dP(A_n = + I_d)=p, 
\end{equation*}
and, for $k\in\BRA{1,\ldots,d-1}$,
\begin{equation*}
  \dP(A_n = - I_d) = \dP(A_n = + \PAR{J_{d}}^k)= \dP(A_n = - \PAR{J_{d}}^k) = \frac{1-p}{2d-1},
\end{equation*}
where $I_d$ is the identity matrix of dimension $d$ and the matrix $J_{d}:=\SBRA{\be_2,\ldots,\be_d,\be_1}$. We observe that $\PAR{J_{d}}^d=I_d$.

Define in this section
\begin{equation}
\label{eq:def_a_dim-d}
    a:=\frac{2dp-1}{2d-1}.
\end{equation}
In the superdiffusive regime (characterized, as we shall see below, by $a\in\PAR{\frac{1}{2},1}$), Bercu and Laulin proved the convergence of the  normalized MERW.
\begin{thm}[Theorem~3.7 in \cite{BercuLaulin2019}]\label{thm:limitERW}
Let $(\bS_n)_{n\geq 0}$ be the $d$-dimensional ERW with memory parameter $p\in[0,1]$ and initial probability vector $q\in[0,1]^{2d}$ as in \eqref{eq:first_value_MERW}. 
When $a>1/2$, there exists a non-degenerate random vector $\bL$ in $\dR^d$ such that
\begin{equation}\label{eq:limitL-2d}
\lim_{n\to \infty}\frac{\bS_n}{n^a}=\bL\quad \text{ a.s.}
\end{equation}
\end{thm}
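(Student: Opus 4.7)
The plan is to re-derive this convergence through the P\'olya-type urn approach advertised in Section~\ref{S-HD}, consistent with the methodology of this paper rather than the martingale approach of \cite{BercuLaulin2019}. In outline: encode the MERW as a $2d$-color urn process, apply Janson's theorem on large urns \cite{Janson2004} to the composition vector, and then project the urn limit onto $\dR^d$ to recover $\bL$.

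Concretely, consider the urn with composition $\bU(n)=(U_1(n),\ldots,U_{2d}(n))^T$, where the $(2k-1)$-th and $2k$-th coordinates count the balls associated with the directions $+\be_k$ and $-\be_k$ respectively, and with an initial single ball of color $c$ drawn according to \eqref{eq:first_value_MERW}. Each step draws a ball uniformly and returns it together with a new ball whose color is obtained by applying the random matrix $A_n$ to the drawn direction; this produces a $2d\times 2d$ replacement matrix $\widetilde A$ that adds the same color with probability $p$ and each of the $2d-1$ other colors with probability $(1-p)/(2d-1)$. The mean replacement matrix $M=\dE[\widetilde A]$ is a rank-one perturbation of a multiple of the identity, with eigenvalue $\lambda_1=1$ (simple, principal eigenvector $\bv_1=\tfrac{1}{2d}(1,\ldots,1)^T$) and eigenvalue $\lambda_2=\tfrac{2dp-1}{2d-1}=a$ of multiplicity $2d-1$ on the orthogonal complement of the constants. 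In the superdiffusive regime $a>1/2$, Janson's theorem then yields the almost sure convergence
\begin{equation*}
    \lim_{n\to\infty}\frac{\bU(n)-n\bv_1}{n^a}=\bW,
\end{equation*}
where $\bW$ lives in the eigenspace associated to $\lambda_2$.

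To descend from the urn to the MERW, introduce the linear projection $\Phi:\dR^{2d}\to\dR^d$ defined by $\Phi(\bu)=\sum_{k=1}^d(u_{2k-1}-u_{2k})\be_k$. The urn and the MERW can be coupled on the same probability space so that, after an initial-condition offset, $\bS_n=\Phi(\bU(n))$ path-wise. Since $\Phi(\bv_1)=0$, the deterministic leading term cancels and one obtains
\begin{equation*}
    \frac{\bS_n}{n^a}=\Phi\PAR{\frac{\bU(n)-n\bv_1}{n^a}}\longrightarrow\Phi(\bW)=:\bL\quad\text{a.s.,}
\end{equation*}
with non-degeneracy of $\bL$ following from the fact that $\Phi$ maps the eigenspace of $\lambda_2$ surjectively onto $\dR^d$ (indeed $\Phi(\be_{2k-1}-\be_{2k})=\be_k$ for each $k$). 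The main obstacle in this plan is less the spectrum of $M$, which is transparent thanks to its high symmetry, than the verification of Janson's technical hypotheses on $\widetilde A$ (irreducibility, integrability, positivity) together with the $(2d-1)$-fold degeneracy of $\lambda_2$: one has to check that this degeneracy is compatible with Janson's framework so that $\bW$ is a bona fide random vector in the whole eigenspace and that $\Phi(\bW)$ inherits a non-degenerate joint distribution on $\dR^d$, rather than collapsing to a lower-dimensional subspace.
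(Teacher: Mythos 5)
You are not competing with a proof in the paper here: Theorem~\ref{thm:limitERW} is quoted from \cite{BercuLaulin2019}, where it is obtained by a multidimensional martingale argument, and the present paper never reproves it. Your route is therefore genuinely different from the source, but it is essentially the machinery this paper builds afterwards: your $2d$-colour urn with replacement ``same colour with probability $p$, each other colour with probability $\frac{1-p}{2d-1}$'' is exactly the urn of Section~\ref{subsec:urn_process_MERW} with random replacement matrix \eqref{eq:replacement-matrix-A}, your spectral picture ($\lambda_1=1$ simple, $\lambda_2=a$ of multiplicity $2d-1$) is the one recorded there, and your projection $\Phi$ is precisely the identification $S_{k,n}=U_{2k-1}(n)-U_{2k}(n)$ of Section~\ref{subsec:link_urn_MERW}, which the paper exploits in the opposite direction (taking \eqref{eq:limitL-2d} as known and transferring properties from $\bW$ to $\bL$ via \eqref{eq:L-W-dimd}). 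Your derivation of the a.s.\ convergence is sound: conditionally on the first step the urn starts from one ball of a deterministic colour, Janson's hypotheses are easily checked (each drawing adds exactly one ball, replacements are bounded, the mean matrix has all entries positive since $\frac12<a<1$ forces $0<p<1$, and the multiple eigenvalue $a$ is admissible in his framework, cf.\ Theorem~\ref{thm:Janson-dimd}), $\Phi(\bv_1)=0$ kills the centring term, and averaging over the first step handles a general $q$ as in \eqref{eq:mixture}. What this buys, compared with the cited martingale proof, is a derivation consistent with the rest of the paper and which produces $\bL=\Phi(\bW)$ together with the structural information on $\bW$ used later; the martingale proof of \cite{BercuLaulin2019} is more self-contained and also yields rates and the Gaussian fluctuations.

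The one step that does not hold as written is the non-degeneracy of $\bL$. Surjectivity of $\Phi$ restricted to the eigenspace of $a$ (note, incidentally, $\Phi(\be_{2k-1}-\be_{2k})=2\be_k$) does not by itself prevent $\Phi(\bW)$ from being degenerate: for that you would need to know that the law of $\bW$ does not concentrate on a translate of a subspace meeting $\ker\Phi$, and the support of $\bW$ is exactly the delicate point this paper devotes Section~\ref{sec:support} to, settling it only for $d\in\{2,3\}$. Fortunately non-degeneracy in the sense of the theorem requires much less than full support: it suffices to check that every linear form $\SCA{t,\bL}$ with $t\neq 0$ has positive variance, and this can be read off from the second-moment formulas for the urn limit available in \cite{Janson2004} (his large-urn results give the covariance of the limit in \eqref{eq:conv-Udimd}, not just its mean), or equivalently from the moment computations recalled in Lemma~\ref{lem:cov_matrix}; if you want a proof independent of \cite{BercuLaulin2019} you should take the Janson route, since Lemma~\ref{lem:cov_matrix} is itself imported from that reference. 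With that supplement your argument is a complete alternative proof of the statement.
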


We now give some properties of the random vector $\bL$.
From \cite[Rem.~3.5]{BercuLaulin2019} giving the first moments of the MERW, we deduce the following lemma. Notice that we recover the result of \cite[Thm~3.8]{BercuLaulin2019} in the case $q$ is the uniform distribution on the $2d$ directions.
\begin{lem}
\label{lem:cov_matrix}
Under the same hypotheses as in Theorem~\ref{thm:limitERW}, the limit $\bL$ in \eqref{eq:limitL-2d} satisfies\begin{align*}
\dE\SBRA{\bL}&=\frac{1}{\Gamma(a+1)}
\begin{pmatrix}
    q_1-q_{2}\\
q_3-q_{4}\\
\vdots\\
q_{2d-1}-q_{2d}\\
\end{pmatrix},\medskip\\
 \dE\SBRA{\bL\bL^T}&=\frac{1}{\Gamma(2a+1)}D +\frac{1-a}{da(2a-1)\Gamma(2a)}I_d,
\end{align*}
where $D$ is the diagonal matrix $D=\mathrm{diag}\PAR{q_1+q_2,q_3+q_{4},\ldots, q_{2d-1}+q_{2d}}$.
\end{lem}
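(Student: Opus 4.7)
The strategy is to combine the exact moment formulas for $\bS_n$ obtained in \cite[Rem.~3.5]{BercuLaulin2019} with a uniform integrability argument transferring them to the limit variable $\bL$. The underlying martingale argument in \cite{BercuLaulin2019} establishing Theorem~\ref{thm:limitERW} shows that, under $a>1/2$, the family $\{n^{-a}\bS_n\}_n$ is bounded in $L^2$; consequently the almost sure convergence upgrades to $L^2$, and both $\dE[\bS_n]/n^a\to\dE[\bL]$ and $\dE[\bS_n\bS_n^T]/n^{2a}\to\dE[\bL\bL^T]$. It therefore suffices to compute the leading asymptotics of the first two moments of $\bS_n$.

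For the first moment, I would first observe that the $\pm J_d^k$ contributions ($k=1,\dots,d-1$) in the decomposition of $A$ cancel pairwise, giving $\dE[A]=a\,I_d$. This yields $\dE[\bS_{n+1}\mid\cF_n]=\frac{n+a}{n}\bS_n$, whence $\dE[\bS_n]=\frac{\Gamma(n+a)}{\Gamma(n)\Gamma(1+a)}\dE[\bS_1]$. Since \eqref{eq:first_value_MERW} gives $\dE[\bS_1]=(q_1-q_2,\ldots,q_{2d-1}-q_{2d})^T$, dividing by $n^a\sim\Gamma(n+a)/\Gamma(n)$ produces the claimed expression for $\dE[\bL]$.

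For the second moment, the key identity is $\dE[A\,\be_i\be_i^T A^T]=a\,\be_i\be_i^T+\frac{1-a}{d}\,I_d$, which exploits that $\bX_k\bX_k^T=\be_{i_k}\be_{i_k}^T$ is always a rank-one diagonal projector, together with the fact that $+J_d^k$ and $-J_d^k$ transport $\be_i\be_i^T$ to the same $\be_{i+k}\be_{i+k}^T$. Combined with the first-moment recursion, this yields a coupled pair of linear matrix recursions for $\dE[\bS_n\bS_n^T]$ and $\dE[T_n]$, where $T_n=\sum_{k=1}^n\bX_k\bX_k^T$; both iterations stay in the two-dimensional subspace spanned by $D$ and $I_d$, so they reduce to a pair of scalar recursions. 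Solving them asymptotically produces the announced two-term decomposition: the $D$-term originating from the initial condition $\dE[\bS_1\bS_1^T]=D$ propagated through the recursion, and the isotropic $I_d$-term from the accumulated noise (with $\dE[T_n]/n\to I_d/d$).

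The main delicate step is the explicit identification of the two coefficients via Gamma-function summation identities of the type $\sum_{k\geq 1}\Gamma(k)/\Gamma(k+2a)=1/[(2a-1)\Gamma(2a)]$, where the superdiffusive assumption $a>1/2$ is crucial to guarantee convergence; this bookkeeping is exactly the content of \cite[Rem.~3.5]{BercuLaulin2019}, which we invoke directly. Combined with the $L^2$-convergence discussed above, it gives both formulas of the lemma.
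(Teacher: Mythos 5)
Your proposal takes a genuinely different route from the paper. The paper's proof is short and rests on citation plus a mixture argument: for a first step equal to a fixed unit vector it quotes the limit moments directly from \cite{BercuLaulin2019} (Rem.~3.5), and it then handles a general first-step law $q$ by writing $\bL$ as the mixture \eqref{eq:mixture} over the $2d$ possible first steps and averaging the per-direction formulas. You instead re-derive the moment asymptotics of $\bS_n$ from the dynamics and transfer them to $\bL$. Your two structural identities are correct ($\dE[A]=aI_d$, and $\dE[A\be_i\be_i^TA^T]=a\,\be_i\be_i^T+\frac{1-a}{d}I_d$, since $1-a=\frac{2d(1-p)}{2d-1}$), the coupled recursions for $\dE[\bS_n\bS_n^T]$ and $\dE[T_n]$ do close on $\Span\{D,I_d\}$, and a pleasant by-product of your route is that a general $q$ is treated in one stroke, because $q$ enters only through $\dE[\bS_1]$ and $\dE[\bX_1\bX_1^T]=D$; no mixture step is needed. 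The first-moment part of your argument is complete and yields exactly the stated formula.

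Two points, however, need repair. First, the transfer of second moments: almost sure convergence together with boundedness of $\{n^{-a}\bS_n\}$ in $L^2$ does \emph{not} imply convergence of second moments; you need uniform integrability of $\NRM{n^{-a}\bS_n}^2$ (e.g.\ an $L^4$ bound), or, more simply, the observation that $\bS_n$ divided by $\pi_n=\Gamma(n+a)/(\Gamma(n)\Gamma(1+a))$ is a martingale which is bounded in $L^2$ for $a>1/2$ and hence converges in $L^2$. Second, and more seriously, the step that constitutes the actual content of the lemma --- the identification of the two coefficients --- is never carried out: you delegate it to Rem.~3.5 ``invoked directly'', which is precisely what the paper's proof does, so as written your proposal is not the self-contained derivation it announces, and the claim that the recursions ``produce the announced decomposition'' is unchecked. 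It is also not a formality: your heuristic that the $D$-term comes only from the initial condition propagated by the homogeneous $n^{2a}$ growth (which alone would give the prefactor $1/\Gamma(2a+1)$) overlooks that the forcing $\frac{a}{n}\dE[T_n]$ has a $D$-component equal to $\frac{a}{n}\frac{\Gamma(n+a)}{\Gamma(n)\Gamma(1+a)}D$, whose accumulation against the propagator contributes at the \emph{same} order. Solving the two scalar recursions exactly with the telescoping Gamma identities you mention gives the $D$-coefficient $\frac{1}{a\Gamma(2a)}=\frac{2}{\Gamma(2a+1)}$ and the isotropic coefficient $\frac{1-a}{da(2a-1)\Gamma(2a)}$; as consistency checks, the trace recursion $\dE[\NRM{\bS_{n+1}}^2\mid\cF_n]=(1+\frac{2a}{n})\NRM{\bS_n}^2+1$ forces $\dE[\NRM{\bL}^2]=\frac{1}{(2a-1)\Gamma(2a)}$ (matching the uniform-case covariance $\frac{1}{d(2a-1)\Gamma(2a)}I_d$ recalled in Section~\ref{sec:moments-dimd}), and in the cluster decomposition \eqref{eq:rec-trees-dimd} the coefficient of $D$ is $\dE[C_1^2]=\frac{2}{\Gamma(2a+1)}$ for the Mittag--Leffler variable $C_1$. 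So the ``bookkeeping'' is exactly where the work lies: you must carry it out explicitly and confront the resulting prefactor of $D$ with the one displayed in the statement (which reads $1/\Gamma(2a+1)$), rather than assume agreement by appealing to the citation.
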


\begin{proof}
When the first step probability vector $q$ is $(1,0,\ldots,0)$, Remark~3.5 in \cite{BercuLaulin2019} states that
\begin{align*}
\dE\SBRA{\bL}&=\frac{1}{\Gamma(a+1)}\be_1,
\\ \dE\SBRA{\bL\bL^T}&=\frac{1}{\Gamma(2a+1)}\PAR{\be_1\be_1^T -\frac{1}{d}I_d}+\frac{1}{d(2a-1)\Gamma(2a)}I_d\\
&=\frac{1}{\Gamma(2a+1)}\be_1\be_1^T +\frac{1-a}{da(2a-1)\Gamma(2a)}I_d.
\end{align*}
We have similar results  when $q$ satisfies $q_k=1$ and $q_i=0$ for $i\neq k$, for any $k\in\BRA{1,\ldots,2d}$. Let us now consider a general distribution $q=(q_1,\ldots, q_{2d})$ for the first step. The limit $\bL$ starting from the probability vector $q$ can be written as the mixture of random vectors 
\begin{equation}
\label{eq:mixture}
   \bL=\sum_{k=1}^{2d}\bL_{(k)}\ind_{\overline{q}_{k-1}\leq U<\overline{q}_k},
\end{equation}
where $\bL_{(k)}$ the limit of the MERW starting from a first step in the direction $(-1)^{k-1}\be_{\lfloor \frac{k-1}{2}\rfloor +1}$, $U$ an independent uniform variable on $[0,1]$, $\overline{q}_0=0$ and $\overline{q}_k=\sum_{i=1}^kq_i$. Since
\begin{align*}
\dE\SBRA{\bL_{(k)}}&=\frac{(-1)^{k-1}}{\Gamma(a+1)}\be_{\lfloor \frac{k-1}{2}\rfloor +1} 
\\ \dE\SBRA{\bL_{(k)}\bL_{(k)}^T}
&=\frac{1}{\Gamma(2a+1)}\be_{\lfloor \frac{k-1}{2}\rfloor +1}\be_{\lfloor \frac{k-1}{2}\rfloor +1}^T +\frac{1-a}{da(2a-1)\Gamma(2a)}I_d,
\end{align*}
the conclusion follows.
\end{proof}

Let $\bZ_1^{(q)}$ be a random variable with distribution $q$ on the $2d$ directions $\PAR{\pm\be_i}_{1\leq i \leq  d}$ and  $(\bZ_j)_{j\geq 2}$ be a sequence of i.i.d.\ random variables with uniform distribution on the directions, independent of $Z_1^{(q)}$. As in dimension $1$ (see Proposition~\ref{prop:rec-trees-dim1}), 
    the superdiffusive limit $\bL$ of MERW can be written
\begin{equation}\label{eq:rec-trees-dimd}
  \bL  =  C_1\bZ_1^{(q)}+\sum_{j=2}^{\infty}C_j\bZ_j,
\end{equation}
where the variables $(C_j)_{j\geq 1}$ are defined in Proposition~\ref{prop:rec-trees-dim1} and are independent of $\bZ_1^{(q)}$ and $(\bZ_j)_{j\geq 2}$.

The following lemma shows that the coordinates of the vector $\bL$ are not independent, even though its covariance matrix is diagonal by Lemma~\ref{lem:cov_matrix}. As an important consequence for us, in order to prove that the vector $\bL$ has a density it is not sufficient to show that each coordinate has a density.

\begin{lem}
\label{lem:dependence}
We assume that $q$ is the uniform distribution on the $2d$ directions and $a>\frac{1}{2}$. Then the coordinates of $\bL=\PAR{L_{i}}_{1\leq i\leq d}$ have the same distribution and
\begin{equation*}
 \sum_{i=1}^dL_{i}=\sum_{j=1}^{\infty}C_j \zeta_j\overset{\cL}{=}L_{1/2},
\end{equation*}
where 
\begin{itemize}
    \item $(C_j)_{j\geq 1}$ are defined as in Proposition~\ref{prop:rec-trees-dim1};
    \item $(\zeta_j)_{j\geq 1}$ are i.i.d.\ Rademacher $\cR(1/2)$ random variables, independent of the above variables;
    \item $L_{1/2}$ is the asymptotic variable in dimension $1$ of ERW with first step probability $1/2$.
\end{itemize}
\end{lem}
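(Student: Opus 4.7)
The plan is to exploit the cluster decomposition~\eqref{eq:rec-trees-dimd} of $\bL$ and to use the symmetry induced by the uniform first-step distribution~$q$. The key observation is that when $q$ is uniform on the $2d$ signed basis vectors, $\bZ_1^{(q)}$ has the same law as each $\bZ_j$ for $j\geq 2$, so that $(\bZ_j)_{j\geq 1}$ becomes an i.i.d.\ sequence, independent of $(C_j)_{j\geq 1}$.

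For the first assertion, I would note that the uniform distribution on $\{\pm\be_i\}_{1\leq i\leq d}$ is invariant under any permutation of coordinates. Hence the joint law of $(\bZ_j)_{j\geq 1}$ is permutation-invariant, and the same holds for $\bL=\sum_j C_j\bZ_j$ since the $C_j$'s are scalars and independent of $(\bZ_j)$. In particular all coordinates $L_1,\ldots,L_d$ of $\bL$ share the same marginal distribution.

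For the second assertion, set $\bu:=\sum_{i=1}^d\be_i$. Then
\begin{equation*}
\sum_{i=1}^d L_i=\SCA{\bu,\bL}=\sum_{j=1}^{\infty} C_j\,\SCA{\bu,\bZ_j}.
\end{equation*}
For each $j\geq 1$, define $\zeta_j:=\SCA{\bu,\bZ_j}$. Since $\bZ_j$ is uniform on $\{\pm\be_i\}_{1\leq i\leq d}$, we have $\zeta_j=+1$ when $\bZ_j\in\{\be_1,\ldots,\be_d\}$ (probability $d/(2d)=1/2$) and $\zeta_j=-1$ when $\bZ_j\in\{-\be_1,\ldots,-\be_d\}$ (probability $1/2$). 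Hence $\zeta_j\sim\cR(1/2)$; the $\zeta_j$'s are i.i.d.\ by independence of the $\bZ_j$'s, and independent of $(C_j)$ by the independence of $(\bZ_j)$ from $(C_j)$.

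To conclude, I would apply Proposition~\ref{prop:rec-trees-dim1} with $q=1/2$, so that $Z_1^{(1/2)}\sim\cR(1/2)$ and the whole sequence $(Z_j)_{j\geq 1}$ becomes i.i.d.\ $\cR(1/2)$, independent of $(C_j)$. The joint distribution of $(C_j,\zeta_j)_{j\geq 1}$ then coincides with that of $(C_j,Z_j)_{j\geq 1}$, which yields the claimed equality in law
\begin{equation*}
\sum_{i=1}^d L_i=\sum_{j\geq 1} C_j\zeta_j\overset{\cL}{=}\sum_{j\geq 1}C_j Z_j\overset{\cL}{=}L_{1/2}.
\end{equation*}
There is no real obstacle here; the only conceptual point is to recognize that taking the inner product with $\bu=(1,\ldots,1)^T$ collapses the $2d$-valued uniform law on $\{\pm\be_i\}$ onto the sign, which is precisely the Rademacher $\cR(1/2)$ variable appearing in the one-dimensional cluster decomposition of $L_{1/2}$.
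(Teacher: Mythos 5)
Your proposal is correct and follows essentially the same route as the paper: the paper writes $\bZ_j=\zeta_j\bM_j$ with $\zeta_j\sim\cR(1/2)$ the sign and $\bM_j$ a uniform direction whose coordinates sum to $1$, which is exactly your observation that pairing with $\bu=(1,\ldots,1)^T$ extracts the Rademacher sign, and then both arguments identify the resulting series with the one-dimensional cluster decomposition of $L_{1/2}$. Your permutation-invariance argument for the equality in law of the coordinates is the same symmetry the paper invokes, just spelled out a bit more explicitly.
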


\begin{proof}
    When $q$ is the uniform distribution, for each $j\geq 1$, the random vectors $\bZ_j$ in \eqref{eq:rec-trees-dimd} can be written $\bZ_j = \zeta_j \bM_j$, where $\zeta_j$ is a Rademacher $\cR(1/2)$ random variable and $\bM_j$ is a multinomial $\cM(1,(\frac{1}{2d},\ldots,\frac{1}{2d}))$ random vector of $\dR^d$, with $\zeta_j$ and $\bM_j$ independent.

We easily deduce that each coordinates of $\bL$ have the same distribution. Moreover,
the sum of the coordinates of each $\bM_j$ being equal to $1$, the lemma is proved.
\end{proof}

We deduce from this lemma that  the coordinates of $\bL$ are not independent. Indeed, when $d=2$, for each $j$ we have
$\bZ_j = \zeta_j(B_j, 1-B_j)^T$, where $\zeta_j$ is a Rademacher $\cR(1/2)$ random variable, $B_j$ a Bernoulli $\cB(1/2)$ random variable, and both are independent. Consequently, for $d=2$ and $q=(1/4,1/4,1/4,1/4)$,
\begin{equation*}
    \bL  =\sum_{j=1}^{\infty}C_j \zeta_j
    \begin{pmatrix} B_j\\1-B_j\end{pmatrix}
    \overset{\cL}{=}\begin{pmatrix}R\\L_{1/2}-R\end{pmatrix},
\end{equation*}
where $L_{1/2}$ is the limiting random variable in dimension $1$ and $R=\sum_{j=1}^{\infty}C_j \zeta_j B_j$ is a priori not independent of $L_{1/2}$.

\subsection{Urn process associated to the MERW}
\label{subsec:urn_process_MERW}

We now work with the space $\dR^{2d}$ endowed with the canonical basis $(\be_i)_{1\leq i\leq 2d}$.
Similarly as in the dimension $1$ case, we associate to the MERW a P\'olya-type urn process $\bU=\PAR{\bU(n)}_{n\geq 0}$ with $2d$ different colors (representing the $2d$ possible directions of the MERW), with the following replacement matrix:
\begin{equation}\label{eq:replacement-matrix-A}
A=\begin{cases}
I_{2d}&\text{ with probability }p,\\
\PAR{J_{2d}}^{k}&\text{ with probability }\frac{1-p}{2d-1}\text{ for each } k\in\BRA{1,\ldots, 2d-1}, 
\end{cases}
\end{equation}
where $I_{2d}$ is the identity matrix in dimension $2d$ and the matrix $
J_{2d}:=\SBRA{e_2,e_3,\ldots,e_{2d},e_1}$, see \cite{Bertenghi2020}.
We remark that $\PAR{J_{2d}}^{2d}=I_{2d}$ and 
 \begin{equation*}
  \dE[A]=pI+\frac{1-p}{2d-1}\begin{pmatrix}
      0&1&1&\ldots&1\\
      1&0&1&\ldots&1\\
      1&1&\ddots&\ddots&\vdots\\
      \vdots&\vdots&\ddots&\ddots&1\\
      1&1&\ldots&1&0
  \end{pmatrix}.
 \end{equation*}
 The matrix  $\dE[A]$ is diagonalizable and admits the eigenvalues
 \[
 \lambda_1 = 1,\quad \lambda_2 = \lambda_3 =\ldots= \lambda_{2d} = \frac{2dp-1}{2d-1},
 \]
with respective unit eigenvectors $\bv_1=\frac{1}{2d}\sum_{i=1}^{2d}\be_i$, $\bv_2=\frac{1}{2}(\be_1-\be_2)$, $\bv_3=\frac{1}{2}(\be_1-\be_3)$, up to $\bv_{2d}=\frac{1}{2}(\be_1-\be_{2d})$.
The following result may be found in Theorems~3.9, 3.24 and 3.26 of \cite{Janson2004}.
 \begin{thm}[\cite{Janson2004}]\label{thm:Janson-dimd}
 Let $a=\frac{2dp-1}{2d-1}$. We assume
  $a>1/2$. Assume further that $\bU(0)=\be_k$ for some $k\in\BRA{1,\ldots, 2d}$,  i.e., the urn process starts from a unique ball of color $k$. Then we have 
 \begin{equation}\label{eq:conv-Udimd}
     \lim_{n\to\infty} \frac{\bU(n) - n\bv_1}{n^a}= \sum_{i=2}^{2d} W_i(\be_k)\bv_i  \quad\text{a.s.},
 \end{equation}
where $W_i(\be_k)$ are real-valued random variables depending on $k$.

Moreover, the expectation of $\bY_{(k)}:=\sum_{i=2}^{2d}W_i(\be_k)\bv_i$ is given by
\begin{equation}
\label{eq:expectation-W-2d}
    \dE[\bY_{(k)}] = \frac{1}{\Gamma(a+1)}\PAR{\be_k - \bv_1}.
\end{equation}
 \end{thm}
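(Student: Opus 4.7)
The plan is to verify that the replacement matrix $A$ in~\eqref{eq:replacement-matrix-A} fits the framework of balanced irreducible Pólya urns of~\cite{Janson2004}, to apply the abstract large-urn convergence theorems there, and to identify the expectation of the limit by an explicit martingale computation.

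\textbf{Step 1: Spectral analysis of $\dE[A]$.} Using the identity $\sum_{k=0}^{2d-1}J_{2d}^{k}=\mathbf{1}\mathbf{1}^{T}$ with $\mathbf{1}=\sum_{i=1}^{2d}\be_{i}$, one first rewrites
\begin{equation*}
\dE[A]=p\,I_{2d}+\frac{1-p}{2d-1}\bigl(\mathbf{1}\mathbf{1}^{T}-I_{2d}\bigr).
\end{equation*}
A direct computation then yields $\dE[A]\bv_{1}=\bv_{1}$ and $\dE[A](\be_{k}-\bv_{1})=a(\be_{k}-\bv_{1})$, confirming the spectrum stated in the excerpt: a simple Perron eigenvalue $\lambda_{1}=1$ with eigenvector $\bv_{1}$, and an eigenvalue $a=(2dp-1)/(2d-1)$ of multiplicity $2d-1$ whose eigenspace is exactly $\{\bu\in\dR^{2d}:\langle\bu,\mathbf{1}\rangle=0\}=\mathrm{Span}(\bv_{2},\ldots,\bv_{2d})$.

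\textbf{Step 2: Almost sure convergence.} The urn is balanced (each draw adds exactly one ball), its mean replacement matrix has strictly positive off-diagonal entries hence is irreducible, and the superdiffusive hypothesis $a>1/2$ is Janson's ``large urn'' condition $\mathrm{Re}(\lambda_{2})/\lambda_{1}>1/2$. Theorems~3.9 and~3.24 of~\cite{Janson2004} then yield the almost sure convergence of $n^{-a}(\bU(n)-n\bv_{1})$ to a random vector that lies in the $\lambda_{2}$-eigenspace; its unique decomposition in the basis $(\bv_{i})_{2\leq i\leq 2d}$ is exactly $\sum_{i=2}^{2d}W_{i}(\be_{k})\bv_{i}$.

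\textbf{Step 3: Computation of the expectation.} At step $n$ the composition vector $\bU(n)$ contains $n+1$ balls, so conditionally on $\cF_{n}=\sigma(\bU(0),\ldots,\bU(n))$ the color drawn at step $n+1$ is distributed according to $\bU(n)/(n+1)$ and an independent copy of $A$ is then applied. This gives the recursion
\begin{equation*}
\dE[\bU(n+1)\mid\cF_{n}]=\Bigl(I_{2d}+\frac{\dE[A]}{n+1}\Bigr)\bU(n).
\end{equation*}
Iterating from $\bU(0)=\be_{k}$ and splitting $\be_{k}=\bv_{1}+(\be_{k}-\bv_{1})$ along the two eigenspaces from Step~1 produces the closed form
\begin{equation*}
\dE[\bU(n)]=(n+1)\bv_{1}+\frac{\Gamma(n+1+a)}{\Gamma(1+a)\,n!}(\be_{k}-\bv_{1}).
\end{equation*}
Dividing by $n^{a}$, the $\bv_{1}$-contribution vanishes (since $a>0$) while Stirling gives $\Gamma(n+1+a)/n!\sim n^{a}$, so the expectation tends to $\frac{1}{\Gamma(1+a)}(\be_{k}-\bv_{1})$. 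Uniform integrability, hence the exchange of limit and expectation, is supplied by the $L^{2}$-bound in Theorem~3.24 of~\cite{Janson2004}, yielding~\eqref{eq:expectation-W-2d}.

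The main obstacle is bookkeeping around Janson's general machinery, namely checking that his hypotheses (positivity, balance, irreducibility, spectral gap $a>1/2$) are met and extracting from his abstract statement both the normalization rate $n^{a}$ and the precise eigenspace location of the limit. Once that is done, Step~3 is a transparent linear-algebra computation on the eigenspaces of $\dE[A]$.
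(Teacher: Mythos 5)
Your proposal is correct and follows essentially the same route as the paper, which obtains this statement by verifying that the urn with replacement matrix \eqref{eq:replacement-matrix-A} fits Janson's framework and then quoting his Theorems~3.9, 3.24 and 3.26; your Steps~1--2 are exactly that verification (balance, irreducibility for $p<1$, the eigenvalue $a$ of multiplicity $2d-1$ with eigenspace $\{\bu:\langle\bu,\mathbf 1\rangle=0\}=\Span\{\bv_2,\ldots,\bv_{2d}\}$, and the large-urn condition $a>1/2$). The only genuine difference is Step~3, where instead of citing Janson's moment theorem for $\dE[\bY_{(k)}]$ you derive it directly from the recursion $\dE[\bU(n+1)\mid\cF_n]=(I_{2d}+\frac{1}{n+1}\dE[A])\bU(n)$, the eigendecomposition $\be_k=\bv_1+(\be_k-\bv_1)$, Stirling, and uniform integrability from the $L^2$-bound; this computation is correct (note only that the surviving $\bv_1$-term after subtracting $n\bv_1$ is the constant $\bv_1$, which is what vanishes under division by $n^a$) and makes the expectation formula \eqref{eq:expectation-W-2d} self-contained rather than quoted.
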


We denote by $\bU_{(k)}$ the urn  process starting from one ball of the $k$-th color. We adapt the idea used for an urn with two colors and divide the tree structure at time $1$ into two subtrees: the first starting from a node of color $k$, and the second starting from a node with color $k$  with probability $p$, or from another color (randomly chosen)\ with probability $1-p$. For $i\in\BRA{1,2}$, $n\geq1$,  $D_i(n)$ denotes the number of leaves  at time $n$ of the $i$-th subtree. We recall that these numbers represent the time inside each subtree. Following the ideas of Section~\ref{sec:tree-struct-dim1}, we deduce the lemma below.
 \begin{lem}\label{lem:zeta}
 For any $ n\geq 1$,
 \begin{equation}
 \label{eq:urn-eq-2d}
     \bU_{(k)}(n)\overset{\cL}= \bU^{(1)}_{(k)}\PAR{D_1(n)-1} + \sum_{j=1}^{2d} \zeta_{j,(k)}\bU^{(2)}_{(j)}\PAR{D_2(n)-1},
 \end{equation}
where $\zeta_{(k)}=\PAR{\zeta_{j,(k)}}_{1\leq j\leq 2d}$ denotes a random vector such that $\zeta_{(k)}=\be_k$ with probability $p$ and $\zeta_{(k)}=\be_{k'}$ with probability $\frac{1-p}{2d}$, for each $k'\neq k$. 
\end{lem}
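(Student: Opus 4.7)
The plan is to extend to $2d$ colors the tree-decomposition argument used informally in Section~\ref{sec:tree-struct-dim1} in dimension $1$. Represent the urn process $\bU_{(k)}$ as a growing labeled tree: at time $0$ the tree is a single root of color $k$; at each subsequent step a leaf is drawn uniformly at random and becomes an internal node with two child-leaves. One child always inherits the parent's color (corresponding to the ``returned'' ball), while the second has color obtained by applying the independent replacement matrix $A_n$ to the parent's color vector. In particular the second child has the parent's color with probability $p$ (when $A_n = I_{2d}$) and, for each other color, probability $\frac{1-p}{2d-1}$ (when $A_n = \PAR{J_{2d}}^{j}$ for some $j \in \BRA{1,\ldots,2d-1}$).

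Apply this at time $1$: the root is the unique leaf and is necessarily drawn, splitting the tree into two subtrees $T^{(1)}, T^{(2)}$ rooted at the two new leaves. The first has color $k$ and the second has color encoded by the random vector $\zeta_{(k)}$ of the statement. The central claim is that, conditionally on the sequence recording which of $T^{(1)}, T^{(2)}$ is fed by each subsequent draw, the within-subtree evolutions are independent urn processes of the same type (replacement matrix $A$), each started from a single ball of the root's color. This holds because a uniform draw from the combined pool of leaves, conditionally on landing in $T^{(i)}$, is uniform over its $D_i(n)$ leaves, and because $(A_n)_{n\geq 1}$ are i.i.d., so the sub-sequences of matrices applied to the two subtrees are themselves i.i.d.\ and mutually independent.

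The ``which subtree'' sequence is itself driven by the leaf counts alone and is a two-color P\'olya urn with identity replacement matrix and initial composition $(1,1)$; its value at time $n$ is the vector $\bD(n) = (D_1(n), D_2(n))$, and inside $T^{(i)}$ the number of internal draws up to time $n$ is $D_i(n) - 1$ (each draw adds one leaf). Combine: $\bU_{(k)}(n)$ equals the sum of the color compositions of $T^{(1)}$ and $T^{(2)}$ at time $n$. The first is distributed as an independent copy $\bU^{(1)}_{(k)}\PAR{D_1(n)-1}$ of an urn started from a single ball of color $k$. The second starts from a single ball of color $J\in\BRA{1,\ldots,2d}$ determined by $\zeta_{(k)}$, so is distributed as $\bU^{(2)}_{(J)}\PAR{D_2(n)-1}$; since exactly one component of $\zeta_{(k)}$ equals $1$, this rewrites as $\sum_{j=1}^{2d} \zeta_{j,(k)}\bU^{(2)}_{(j)}\PAR{D_2(n)-1}$, which yields~\eqref{eq:urn-eq-2d}.

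The main delicate point is the rigorous justification of the conditional independence of the within-subtree evolutions and of the independence of $\bD$ from the colors. This is essentially an exchangeability argument: the marked tree dynamics are symmetric under relabeling leaves inside a subtree, and conditioning on the ``which subtree'' sequence together with the i.i.d.\ nature of $(A_n)$ disentangles the two sub-urns. Formally one constructs an enlarged probability space on which each leaf carries a subtree label and the replacement matrices are pre-allocated to subtree-specific i.i.d.\ sub-sequences; a straightforward induction on $n$ then delivers the stated distributional identity.
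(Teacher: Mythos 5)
Your proposal is correct and takes essentially the same route as the paper: a tree decomposition at time $1$ into two subtrees, the observation that the leaf-count vector $\bD(n)=(D_1(n),D_2(n))$ is a two-colour P\'olya urn with identity replacement and initial composition $(1,1)$ so that each subtree has undergone $D_i(n)-1$ internal draws, and the conditional independence of the two sub-urns given the allocation sequence, driven by i.i.d.\ copies of the replacement matrix $A$ — your write-up in fact spells out the exchangeability/conditional-independence step in a bit more detail than the paper, which simply refers back to the dimension-$1$ discussion. Note also that your derivation yields $\zeta_{(k)}=\be_{k'}$ with probability $\frac{1-p}{2d-1}$ for each $k'\neq k$, which is the correct value (the $\frac{1-p}{2d}$ in the statement is a typo, since those probabilities would not sum to $1$).
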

Using the above almost sure convergence \eqref{eq:conv-D} and Theorem~\ref{thm:Janson-dimd}, this leads to the system of equations
 \begin{equation}
     \label{eq:bX-2d}
     \bY_{(k)}\overset{\cL}= V^a\bY^{(1)}_{(k)} + (1-V)^a\sum_{j=1}^{2d} \zeta_{j,(k)}\bY^{(2)}_{(j)},\quad k\in\BRA{1,\ldots,2d},
 \end{equation}
where $\bY_{(k)}$ is the limit vector of the urn process started with one ball of the $k$-th color. In Equation~\eqref{eq:bX-2d}, $V$ is a uniform random variable on the segment $[0,1]$, $\bY^{(1)}_{(k)}$ and $\bY^{(2)}_{(k)}$ are copies of $\bY_{(k)}$, and all random variables are independent.

\subsection{Link between the asymptotics of the urn process and the MERW}
\label{subsec:link_urn_MERW}

We consider the urn process $\bU$ defined as above, starting from $\bU(0)$ chosen according to a probability vector $q$.
We can easily check that the $d$-dimensional process $(\bS_n)_{n\geq 0}$ defined for all $n\geq 1$ by
  \begin{equation*}
  \left\{\begin{array}{lclcl}
     S_{1,n} &=& U_1(n)-U_{2}(n) &=& 2\bv_2^T\bU_n  ,\\
     S_{2,n} &=& U_3(n)-U_4(n) &=& 2(\bv_4-\bv_3)^T\bU_n, \\ 
     &\vdots &&\vdots& \\
     S_{d,n} &=& U_{2d-1}(n)-U_{2d}(n) &=& 2(\bv_{2d}-\bv_{2d-1})^T\bU_n,
     \end{array}\right.
 \end{equation*}
is a MERW with memory parameter $p$ and initial distribution $q$.

We denote by $\bL_{(k)}$ the limit random vector of the MERW when the first step is in the $k$-th direction. When the initial distribution is given by a general probability vector $q$, the limit $\bL$ of the MERW can be written as a discrete mixture of $\PAR{\bL_{(1)},\ldots,\bL_{(2d)}}$, see \eqref{eq:mixture}.

 The almost sure convergences \eqref{eq:limitL-2d} and \eqref{eq:conv-Udimd} imply
   \begin{align}
   \nonumber
  \bL_{(k)}&= \lim_{n\to\infty} \frac{1}{n^a} \bS_n = \lim_{n\to\infty} \frac{1}{n^a} 
    \begin{pmatrix}  U_1(n)-U_2(n)
    \\
    U_3(n)-U_4(n)
    \\\vdots\\
    U_{2d-1}(n)-U_{2d}(n)
    \end{pmatrix} 
 =\begin{pmatrix}  Y_{1,(k)}-Y_{2,(k)}\\ Y_{3,(k)}-Y_{4,(k)}\\\vdots\\
Y_{2d-1,(k)}-Y_{2d,(k)}
\end{pmatrix}
\\[0.3cm]
&= 
    \begin{pmatrix}  W_2(\be_k) +\frac{1}{2}\sum_{k=3}^{2d}W_k(\be_k)
    \\[0.2cm]\frac{W_4(\be_k)-W_3(\be_k)}{2}\\
   \vdots\\\frac{W_{2d}(\be_k)-W_{2d-1}(\be_k)}{2}
   \end{pmatrix}
=\begin{pmatrix}  
1&\frac{1}{2}&\frac{1}{2}&\frac{1}{2}&\cdots &\cdots&\frac{1}{2}
\\[0.2cm]
0&-\frac{1}{2}&\frac{1}{2}&0&0&\ldots&0&
\\
0&0&0&-\frac{1}{2}&\frac{1}{2}&\ddots&\vdots
\\
\vdots&&&\ddots&\ddots&\ddots&0
\\
0&\cdots&\cdots&\cdots&0&-\frac{1}{2}&\frac{1}{2}
\end{pmatrix}
\bW(\be_k)  \quad\text{a.s.} 
\label{eq:L-W-dimd}
\end{align}
We notice that this last matrix of size $d\times (2d-1)$ is of rank $d$. Thus, if we prove that $\bW:=(W_2,\ldots,W_{2d})$ has a smooth positive density function on $\dR^{2d-1}$, as what we have done in dimension $1$, we will immediately deduce that the distribution of $\bL$ admits a smooth positive density function on $\dR^d$.  Consequently, we will focus our study on  the random vectors $\bW$. 

\subsection[Uniqueness and fixed-point equations in dimension d]{Uniqueness and fixed-point equations in high dimension}\label{sec:fixed-point-dimd}

We first study the system \eqref{eq:bX-2d} of fixed-point equations satisfied by the limit process, and in Proposition~\ref{prop:existence_unique_fpes} we prove existence and uniqueness of the associated solution in the space of random vectors with a finite second-order moment and a given expectation. Then in Proposition~\ref{prop:fixed-point-dimd} we derive a second fixed-point equation, which will be the key-point for studying the distribution of the limit process.

\subsubsection*{Existence and uniqueness of the solution to the system \eqref{eq:bX-2d}}

Equation \eqref{eq:bX-2d} ensures that, for all $j,k\in\BRA{1,\ldots,2d}$
\begin{equation}
\label{eq:fixed-point-X-color}
    Y_{j,(k)} = V^a Y_{j,(k)}^{(1)}+(1-V)^a \sum_{i=1}^{2d-1}A_{i,j}Y_{j,(i)}^{(2)},
\end{equation}
where $A=(A_{i,j})$ has the same law as the random replacement matrix \eqref{eq:replacement-matrix-A} of the urn process and $Y_{j,(k)}$ is the $j$-th coordinate of the limiting vector $\bY_{(k)}$ starting with one ball of the $k$-th color. Hence, for $\bY^j := \PAR{Y_{j,(k)}}_{1\leq k\leq 2d}$,
\begin{equation}
\label{eq:fixed-point-X-color-vector}
    \bY^{j} = V^a \bY^{j,(1)}+(1-V)^a A \bY^{j,(2)},
\end{equation}
with $\bY^{j,(1)}$ and $\bY^{j,(2)}$ independent copies of $ \bY^{j}$, and the above holds jointly for all $j\in\BRA{1,\ldots,2d}$ with the same $V$ and $A$.
\begin{prop}
\label{prop:existence_unique_fpes}
Let $a\in\PAR{1/2,1}$.
    Given $\bm=(m_2,\ldots,m_{2d})\in \dR^{2d-1}$, we define $\cP_2(\bm)$ as the space of probability measures on $\dR^{2d}$ with finite  second-order moment and such that for all $\mu\in \cP_2(\bm)$,
    \begin{equation*}
    \int_{\dR^{2d}} x\mu(\dd x)=\sum_{i=2}^{2d} m_i \bv_i.
    \end{equation*}
Then there exists a unique solution in $\cP_2(\bm)$ to the fixed-point equation \eqref{eq:fixed-point-X-color-vector}, for all $j\in\BRA{1,\ldots,2d}$.
\end{prop}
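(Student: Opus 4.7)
The plan is to follow closely the contraction argument used for Theorem~\ref{thm:dim1_solution_dist_eq}, but adapted to the vector setting. Fix $j\in\DSBRA{1,2d}$ and, given $\bm=(m_2,\ldots,m_{2d})$, define the map
\begin{equation*}
H:\cP_2(\bm)\longrightarrow\cP(\dR^{2d}),\qquad \mu\longmapsto\cL\PAR{V^{a}\bY^{(1)}+(1-V)^{a}A\bY^{(2)}},
\end{equation*}
where $\bY^{(1)},\bY^{(2)}$ are independent with law $\mu$, $V\sim\cU([0,1])$, $A$ has the distribution \eqref{eq:replacement-matrix-A}, and all four random objects are independent. The first task is to check that $H$ preserves $\cP_2(\bm)$. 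The second moment is obviously finite since the coefficients $V^{a}$ and $(1-V)^{a}$ are bounded by $1$. For the expectation, note that since $V$ and $A$ are independent of the $\bY^{(k)}$'s, we have
\begin{equation*}
\int \bx\, H(\mu)(\dd\bx) = \dE[V^{a}]\PAR{I_{2d}+\dE[A]}\int\bx\,\mu(\dd\bx)=\frac{1}{1+a}\PAR{I_{2d}+\dE[A]}\sum_{i=2}^{2d} m_i\bv_i.
\end{equation*}
Each $\bv_i$ for $i\geq 2$ is an eigenvector of $\dE[A]$ with eigenvalue $a$, so $\frac{1}{1+a}(I_{2d}+\dE[A])\bv_i=\bv_i$. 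This shows $H(\mu)\in\cP_2(\bm)$ and also explains why the constraint must be imposed in the eigenspace spanned by $\bv_2,\ldots,\bv_{2d}$.

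Next I would establish contraction in the $L^{2}$-Wasserstein distance $d_{\cW}$ on $\cP_2(\bm)$. Take $\mu_1,\mu_2\in\cP_2(\bm)$ together with two independent optimal couplings $(\bY_1^{(1)},\bY_2^{(1)})$ and $(\bY_1^{(2)},\bY_2^{(2)})$ with marginals $(\mu_1,\mu_2)$, and independent of $(V,A)$. Then
\begin{equation*}
d_{\cW}(H\mu_1,H\mu_2)^{2}\leq \dE\SBRA{\NRM{V^{a}\PAR{\bY_1^{(1)}-\bY_2^{(1)}}+(1-V)^{a}A\PAR{\bY_1^{(2)}-\bY_2^{(2)}}}^{2}}.
\end{equation*}
Expanding the square yields two diagonal terms and one cross term. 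The cross term factorises by independence as
\begin{equation*}
2\dE[V^{a}(1-V)^{a}]\SCA{\dE[\bY_1^{(1)}-\bY_2^{(1)}],\,\dE[A]\,\dE[\bY_1^{(2)}-\bY_2^{(2)}]},
\end{equation*}
and this vanishes because $\mu_1$ and $\mu_2$ have the same expectation. For the diagonal terms, the key observation is that $A$ is almost surely an orthogonal matrix: $I_{2d}$ and every power of the cyclic permutation $J_{2d}$ satisfy $A^{T}A=I_{2d}$, so $\NRM{A\bx}=\NRM{\bx}$ almost surely. Combining with $\dE[V^{2a}]=\dE[(1-V)^{2a}]=\frac{1}{1+2a}$, we obtain
\begin{equation*}
d_{\cW}(H\mu_1,H\mu_2)^{2}\leq\frac{2}{1+2a}\,d_{\cW}(\mu_1,\mu_2)^{2}.
\end{equation*}
Since $a>\frac{1}{2}$, the factor $\sqrt{2/(1+2a)}$ is strictly smaller than $1$, so $H$ is a contraction on the complete metric space $(\cP_2(\bm),d_{\cW})$. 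Banach's fixed point theorem then provides existence and uniqueness of the solution in $\cP_2(\bm)$, for each $j\in\DSBRA{1,2d}$ separately.

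The only genuine subtlety, beyond copying the one-dimensional proof, is the vanishing of the cross term, which forces the mean-vector constraint, and the orthogonality of $A$, which is what makes the contraction constant identical to the scalar case $\sqrt{2/(1+2a)}$; without this the rotation by $A$ could in principle dilate distances. Everything else is a direct transcription of the proof of Theorem~\ref{thm:dim1_solution_dist_eq}, so no surprise is expected.
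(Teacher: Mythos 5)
Your proposal is correct and follows essentially the same route as the paper: the same map $H$, preservation of $\cP_2(\bm)$ via $\dE[A]\bv_i=a\bv_i$, and contraction with constant $\sqrt{2/(1+2a)}$ in the Wasserstein metric using $\NRM{A\bx}=\NRM{\bx}$, followed by Banach's fixed-point theorem. Your explicit verification that the cross term vanishes because both measures share the same mean is exactly the step the paper uses implicitly when it bounds the expanded square by the two diagonal terms.
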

Thanks to the convergence Theorem~\ref{thm:Janson-dimd}, we know that there is a (not necessarily unique)\ solution to \eqref{eq:bX-2d} when $a>1/2$, and its expectation is given by \eqref{eq:expectation-W-2d}.
\begin{proof} 
As in the proof of Theorem~\ref{thm:dim1_solution_dist_eq}, we consider the Wasserstein distance $d_\cW$ on $\cP_2(m)$.
Then we define
\begin{align*}
    H :\  \cP_2(\bm) &\longrightarrow \cP_2(\bm) \\
     \mu &\longmapsto \cL\PAR{V^a\bY^{(1)}+(1-V)^a A\bY^{(2)}},
\end{align*}
where $\bY^{(1)},\bY^{(2)}$ have law $\mu$, $V$ is a uniform random variable on $[0,1]$, the random matrix $A$ is the same matrix \eqref{eq:replacement-matrix-A} as in the urn process, and all the variables are independent from each other. As $\dE[V^{a}]=\frac{1}{1+a}$, and for all $i\in\{2,\ldots,2d\}$, $\dE[A]\bv_i=a\bv_i$, we easily deduce that for all $\mu\in\cP_2(\bm)$, $H(\mu)\in\cP_2(\bm)$.

We now show that $H$ is Lipschitz-continuous for the Wasserstein metric.
Let 
$\mu,\nu\in\cP_2(\bm)$, and $\bigl(\bY^{(1)},\widetilde{\bY}^{(1)}\bigr)$ and $\bigl(\bY^{(2)},\widetilde{\bY}^{(2)}\bigr)$ two independent couples with marginal distributions $(\mu,\nu)$.
We then compute
\begin{align*}
&d_\cW(H\mu,H\nu)^2 \\&\quad \leq \dE\SBRA{\NRM{V^a(\bY^{(1)}-\widetilde{\bY}^{(1)})+(1-V)^aA(\bY^{(2)}-\widetilde{\bY}^{(2)})}^2} \\
&\quad \leq \dE[V^{2a}]\dE\SBRA{\NRM{\bY^{(1)}-\widetilde{\bY}^{(1)}}^2}+\dE[(1-V)^{2a}]\dE\SBRA{\NRM{A\PAR{\bY^{(2)}-\widetilde{\bY}^{(2)}}}^2} \\
&\quad \leq \frac{2}{1+2a} \dE\SBRA{\NRM{\bY^{(1)}-\widetilde{\bY}^{(1)}}^2},
\end{align*}
because $\NRM{A\bY}=\NRM{\bY}$ for all $\bY\in\dR^{2d}$.
Consequently, taking the infimum over all couples $\bigl(\bY^{(1)},\widetilde{\bY}^{(1)}\bigr)$ with marginal distributions $(\mu,\nu)$, we deduce 
\begin{equation*}
   d_\cW(H\mu,H\nu)  \leq  \sqrt{\frac{2}{1+2a}} d_\cW(\mu,\nu)
\end{equation*}
and $H$ is a contraction. The conclusion follows.
\end{proof}
Equation~\eqref{eq:fixed-point-X-color-vector} is not a fully convenient (fixed-point)\ equation to study the distributions of $\bW$ and $\bL$, as we cannot easily relate $\bY^j$ to the $\bL_{(k)}$'s. Besides, for any $k\geq 1$, by definition of $\bY_{(k)}$ in Theorem~\ref{thm:Janson-dimd} and by definition of the eigenvectors $(\bv_i)_{1\leq i\leq 2d}$, we easily observe that
\begin{equation}\label{eq:lienX-W}
2\bY_{(k)}=\PAR{\sum_{i=2}^{2d}W_i(\be_k)} \, \be_1-\sum_{i=2}^{2d}W_i(\be_k)\, \be_i.
\end{equation}
 The support of $\bY_{(k)}$ is thus included in the hyperplane $\bigl\{x\in\dR^{2d}:\sum_{i=1}^{2d}x_i=0\bigr\}$, and $\bY_{(k)}$ cannot have a density with respect to the Lebesgue measure on $\dR^{2d}$.
However, the fixed-point equation~\eqref{eq:fixed-point-X-color-vector} will be useful to study the moments of $\bY$ and $\bL$ in Section~\ref{sec:moments-dimd}.
 
In the next paragraph, we introduce another fixed-point equation, which will be the key-point to show that $\bW$ and $\bL$ have a positive density with respect to the Lebesgue measure.

\subsubsection*{A second fixed-point equation in high dimension}
We now focus on the study on $\bU_{(1)}=\PAR{\bU_{(1)}(n)}_{n\geq 0}$, i.e., the urn process starting from a unique ball of color $1$: $\bU_{(1)}(0)=\be_1$.  The other cases are similar by symmetry of the model.
Indeed, we notice that the urn process starting from a unique ball of any color $k\in\BRA{1,\ldots,2d }$ is equal to
\begin{equation*}
    \PAR{\PAR{J_{2d}}^{k-1}\bU_{(1)}(n)}_{n\geq 0}.
\end{equation*}
More generally, for $i,k\in\BRA{1,\ldots, 2d}$, we have $\bU_{(i)}(n)=J_{2d}^{i-k}\bU_{(k)}(n)$. 
For clarity of the proofs, we will omit the subscript in the sequel, and we will denote $\bU:=\bU_{(1)}$, $\bY:=\bY_{(1)}$, and for $i\in\BRA{2,\ldots,2d}$, $W_i:=W_i(\be_1)$.

 We notice that Equation~\eqref{eq:bX-2d} leads to
 \begin{equation}
     \label{eq:bXdimd}
     \bY\overset{\cL}= V^a\bY^{(1)} + (1-V)^a\sum_{j=1}^{2d} \zeta_{j}J^{j-1}\bY^{(2)},
 \end{equation}
 where $\bY$ is the limit vector of the urn process started with one ball of color $1$ (see Theorem~\ref{thm:Janson-dimd}), and $\zeta:=\zeta_{(1)}$ is defined in Lemma~\ref{lem:zeta}.

We now give a useful fixed-point equation in dimension $2d-1$. 
Define $K$ as the following variant of a companion matrix of dimension $(2d-1)\times(2d-1)$:
\begin{equation}
\label{eq:def_matrix_K}
     K=\begin{pmatrix}
         -1&-1&\cdots &\cdots&-1\\
         1&0&\cdots&\cdots &0\\
         0&\ddots&\ddots&&\vdots\\
         \vdots&\ddots&\ddots&\ddots&\vdots\\
         0&\cdots&0&1&0
     \end{pmatrix}.
     \end{equation}
Observe that $K$ is a cyclic matrix: $K^{2d}=I_{2d-1}$.
\begin{prop}
\label{prop:fixed-point-dimd}
    The limit vector $\bW=(W_2,\ldots,W_{2d})\in \dR^{2d-1}$, defined in Theorem~\ref{thm:Janson-dimd} for $\bU(0)=\be_1$, satisfies the fixed-point equation
     \begin{equation}
     \label{eq:fixed-point-W-dimd}
\bW \overset{\cL}{=}V^{a}
\bW^{(1)}
+(1-V)^{a} B
\bW^{(2)},
 \end{equation}
where
\begin{itemize}
    \item $V$ denotes a uniform random variable on $[0,1]$;
    \item $\bW^{(1)}$ and $\bW^{(2)}$ are copies of  $\bW$;
  \item $B$ is a random matrix equal to $B=K^{k}$ with probability $p_k$ for each $ k\in\BRA{0,\ldots, 2d-1}$, 
 where $p_0=p$ and $p_1=\ldots=p_{2d-1}=\frac{1-p}{2d-1}$;
\item all the variables $V$, $\bW^{(1)}$, $\bW^{(2)}$ and $B$ are assumed independent.
\end{itemize}
\end{prop}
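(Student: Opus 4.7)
The plan is to deduce \eqref{eq:fixed-point-W-dimd} as the image of the previously established equation \eqref{eq:bXdimd} under an explicit linear isomorphism between the support of $\bY$ and the ambient space $\dR^{2d-1}$ of $\bW$. Set
\[
H:=\Bigl\{x\in\dR^{2d}\,:\,\textstyle\sum_{i=1}^{2d} x_i=0\Bigr\},
\]
and define $\Phi:\dR^{2d-1}\to H$ by
\[
\Phi(w_2,\ldots,w_{2d})=\tfrac{1}{2}\Bigl(\sum_{i=2}^{2d}w_i\Bigr)\be_1-\tfrac{1}{2}\sum_{i=2}^{2d}w_i\be_i.
\]
A direct verification shows that $\Phi$ is a linear bijection from $\dR^{2d-1}$ onto $H$, with inverse $\Psi(y)=(-2y_2,\ldots,-2y_{2d})$. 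Relation \eqref{eq:lienX-W} expresses exactly $\bY=\Phi(\bW)$ almost surely, so that $\bY$ is $H$-valued.

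Next, I would determine how the cyclic shift $J=J_{2d}$ acts on $H$ when read through the coordinates of $\bW$. Applying $J$ to $\Phi(\bW)$ cyclically permutes the $2d$ components, producing a vector whose $\be_1$-component equals $-\tfrac{1}{2}W_{2d}$, whose $\be_2$-component equals $\tfrac{1}{2}\sum_{i=2}^{2d}W_i$, and whose $\be_{i+1}$-component equals $-\tfrac{1}{2}W_i$ for $i=2,\ldots,2d-1$. A short index computation then yields
\[
\Psi\bigl(J\Phi(\bW)\bigr)=\Bigl(-\sum_{i=2}^{2d}W_i,\,W_2,\,W_3,\,\ldots,\,W_{2d-1}\Bigr)^{T}=K\bW,
\]
so $\Psi\circ J\circ\Phi=K$. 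By iteration, $\Psi\circ J^{k}\circ \Phi=K^{k}$ for every $k\in\dZ$, consistent with the fact that both $J$ and $K$ are cyclic of order $2d$.

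Finally, Equation \eqref{eq:bXdimd} can be rewritten as
\[
\bY\overset{\cL}{=}V^{a}\bY^{(1)}+(1-V)^{a}J^{N}\bY^{(2)},
\]
where $N\in\{0,\ldots,2d-1\}$ is determined by the random direction $\zeta=\be_{N+1}$ of Lemma~\ref{lem:zeta}, with $\dP(N=0)=p$ and $\dP(N=k)=\frac{1-p}{2d-1}$ for $k\in\{1,\ldots,2d-1\}$. Applying the linear map $\Psi$ to both sides (each is a.s.\ in $H$) and using $\Psi\bY^{(i)}=\bW^{(i)}$ together with $\Psi J^{N}\bY^{(2)}=\Psi J^{N}\Phi(\bW^{(2)})=K^{N}\bW^{(2)}$, I obtain
\[
\bW\overset{\cL}{=}V^{a}\bW^{(1)}+(1-V)^{a}K^{N}\bW^{(2)},
\]
which is exactly \eqref{eq:fixed-point-W-dimd} with $B=K^{N}$ and the announced weights $p_k$.

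No step looks genuinely difficult here: the whole argument is a change of variables. The only care needed is in the bookkeeping that yields $\Psi J\Phi=K$, where the specific shape of $K$ (first row of $-1$'s and sub-diagonal of $1$'s) turns out to be precisely what encodes the cyclic shift on $H$ in the $(W_2,\ldots,W_{2d})$-coordinates. This is also what makes the reduction of the fixed-point equation from the ambient dimension $2d$ (with a built-in linear constraint) down to the effective dimension $2d-1$ transparent.
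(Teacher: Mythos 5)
Your proof is correct and follows essentially the same route as the paper: both start from Equation~\eqref{eq:bXdimd}, use the relation~\eqref{eq:lienX-W} (i.e.\ $\bW=-2(Y_2,\ldots,Y_{2d})$, your map $\Psi$) to pass to the $\bW$-coordinates, and identify the action of the cyclic shift $J_{2d}$ on the limit vector with the matrix $K$. The only difference is organizational: the paper writes out the resulting random matrix entry-wise (with entries $\zeta_i-\zeta_j$) and then reads off its law, whereas you package the same computation into the conjugation identity $\Psi J_{2d}\Phi=K$ and its powers, which is a clean and perfectly valid way to conclude.
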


\begin{proof}
By \eqref{eq:lienX-W}, we note that  $\bW=(W_2,\ldots,W_{2d})=-2\PAR{Y_2,\ldots,Y_{2d}}$. Then, we deduce from \eqref{eq:bXdimd} that $\bW$ satisfies the following fixed-point equation
\begin{equation*}
\bW
\overset{\cL}{=}V^{a}
\bW^{(1)}+(1-V)^{a}\widetilde \bW^{(2)},
\end{equation*}
where $\widetilde \bW^{(2)}$ is the random vector
\begin{align*}
\widetilde \bW^{(2)}&=
\left(\begin{array}{lclclcl}
(\zeta_1-\zeta_2)W_2^{(2)}&+&(\zeta_3-\zeta_2)W_
{2d}^{(2)}
&+&\cdots &+&(\zeta_{2d}-\zeta_2)W_3
 \\
 (\zeta_1-\zeta_3)W_3^{(2)}&+&(\zeta_2-\zeta_3)W_2^{(2)}
 &+&\cdots&+&(\zeta_{2d}-\zeta_3)W_4^{(2)}
 \\
 \qquad \vdots & &  \qquad \vdots& & & &\qquad \vdots
\\
 (\zeta_1-\zeta_{2d})W_{2d}^{(2)}&+&(\zeta_2-\zeta_{2d})W_{2d-1}^{(2)}
 &+&\cdots&+&(\zeta_{2d-1}-\zeta_{2d})W_2^{(2)}
 \end{array}\right)
 \\[0.5cm]
 &=
\begin{pmatrix}
\zeta_1-\zeta_2&\zeta_{2d}-\zeta_2&\ldots&\zeta_4-\zeta_2&\zeta_3-\zeta_2
 \\
 \zeta_2-\zeta_3&\zeta_1-\zeta_3&\ldots&\zeta_{5}-\zeta_3&\zeta_4-\zeta_3
 \\
 &\vdots&&\vdots&
\\
\zeta_{2d-1}-\zeta_{2d}&\zeta_{2d-2}-\zeta_{2d}&\ldots&
\zeta_2-\zeta_{2d}& \zeta_1-\zeta_{2d}
 \end{pmatrix}\bW^{(2)}.
 \end{align*}
The conclusion follows by computing the distribution of latest matrix from the distribution of $\zeta=\zeta_{(1)}$, defined in Lemma \ref{lem:zeta}.
\end{proof}

\subsection[Existence of a density]{Existence of a density for $\bW$}
\label{subsec:density_d}

In this section, we assume that
\begin{equation}
\label{eq:support_W_hypothesis}
    \Supp(\bW)=\dR^{2d-1}.
\end{equation}
The support will be carefully studied in Section~\ref{sec:support}.
We follow the same scheme of proof as in dimension $1$. Consequently, our aim is to study the integrability of the characteristic function of $\bW$, defined on $\dR^{2d-1}$ by
\begin{equation*}
   \varphi_\bW(t):=\dE\SBRA{\ee^{i\SCA{t,\bW}}}.
\end{equation*}

\begin{thm}\label{thm:density-W-dimd}
     Assuming \eqref{eq:support_W_hypothesis}, let $\bW$ be a non-zero solution of \eqref{eq:fixed-point-W-dimd}  with a general  probability vector $(p_k)_{0\leq k\leq 2d-1}$ satisfying $p_k>0$, for all $ k\in\BRA{0,\ldots, 2d-1}$. Then 
     \begin{itemize}
         \item for any $\rho\in(0,1/a)$, and any $k\geq 1$, there exists $C>0$ such that for any $t\in\dR^{2d-1}$, 
         \begin{equation}
        \label{eq:upper_bound_cf}
             \ABS{\varphi_\bW(t)} \leq \frac{C}{\NRM{t}^{k\rho}};
         \end{equation}
         \item the distribution of $\bW$ has a bounded smooth density function with respect to the Lebesgue measure on $\dR^{2d-1}$.
     \end{itemize}
 \end{thm}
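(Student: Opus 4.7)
The plan is to adapt the four-step proof of Theorem~\ref{thm:fct-carac-X} to this multidimensional setting, using the fixed-point equation \eqref{eq:fixed-point-W-dimd} in place of \eqref{eq:fixed-point-X-only}. I would first take Fourier transforms on both sides of \eqref{eq:fixed-point-W-dimd} and use independence to obtain the functional identity
\begin{equation*}
\varphi_\bW(t)=\dE\SBRA{\varphi_\bW(V^{a}t)\,\varphi_\bW((1-V)^{a}B^{T}t)},
\end{equation*}
the higher-dimensional analogue of \eqref{eq:fct-carac-X-V}. Since $B$ takes finitely many values, each invertible because $B^{2d}=I_{2d-1}$, there exist constants $0<c\leq C$ with $c\NRM{t}\leq\NRM{B^{T}t}\leq C\NRM{t}$ uniformly, so that the Euclidean norm $\NRM{t}$ plays the role of $\ABS{t}$ in the one-dimensional argument.

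Three of the four steps will then transfer essentially verbatim. First, $\ABS{\varphi_\bW(t)}<1$ for all $t\neq 0$: the hypothesis $\Supp(\bW)=\dR^{2d-1}$ forces the projection $\SCA{t,\bW}$ to have support $\dR$ for any $t\neq 0$, so it cannot be concentrated on a coset of a lattice. Next, Fatou's lemma applied to the functional identity, combined with $V^{a}>0$ a.s.\ and $\NRM{B^{T}t}\geq c\NRM{t}$, gives $L:=\limsup_{\NRM{t}\to\infty}\ABS{\varphi_\bW(t)}\leq L^{2}$, so $L\in\{0,1\}$. Once $L=0$ is established (the hard step, discussed below), I would iterate as in 1D: for $\varepsilon>0$, pick $T_\varepsilon$ with $\ABS{\varphi_\bW(t)}\leq\varepsilon$ when $\NRM{t}\geq T_\varepsilon$, and write
\begin{equation*}
\ABS{\varphi_\bW(t)}\leq\varepsilon\,\dE\SBRA{\ABS{\varphi_\bW(V^{a}t)}}+\dP\bigl(\NRM{(1-V)^{a}B^{T}t}\leq T_\varepsilon\bigr).
\end{equation*}
Markov's inequality and $\NRM{B^{T}t}\geq c\NRM{t}$ will bound the second term by $C'(T_\varepsilon/\NRM{t})^{\rho}$ for any $\rho\in(0,1/a)$; iteration gives $\ABS{\varphi_\bW(t)}=O(\NRM{t}^{-\rho})$, and reinjecting this back into the functional identity bootstraps the decay to $O(\NRM{t}^{-k\rho})$ for every $k\geq 1$, yielding \eqref{eq:upper_bound_cf}. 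Choosing $k\rho>2d-1+j$ makes $\NRM{t}^{j}\,\varphi_\bW(t)$ integrable on $\dR^{2d-1}$, so Fourier inversion produces a bounded $C^{j}$ density; letting $j$ be arbitrary yields a bounded $C^{\infty}$ density.

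The main obstacle will be Step~3, ruling out $L=1$: the 1D argument relies on the intermediate value theorem applied to $s\mapsto\ABS{\varphi_W(s)}$, which has no direct counterpart on $\dR^{2d-1}$. My plan is to reduce to a one-dimensional problem by introducing
\begin{equation*}
F(s):=\sup_{\NRM{u}=1}\ABS{\varphi_\bW(su)},\quad s\geq 0.
\end{equation*}
Continuity of $\varphi_\bW$ and compactness of the unit sphere make $F$ continuous on $[0,\infty)$ with $F(0)=1$, while $F(s)<1$ for every $s>0$ by the first step; moreover $\limsup_{s\to\infty}F(s)=L$. If $L=1$, I would pick $s_{0}>0$ with $F(s_{0})<1$, choose $\varepsilon\in(0,1-F(s_{0}))$, and apply the intermediate value theorem to $F$ to find $0<s_{1}<s_{0}<s_{2}$ with $F(s_{1})=F(s_{2})=1-\varepsilon$ and $F\leq 1-\varepsilon$ on $[s_{1},s_{2}]$. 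Letting $u_{2}$ attain the supremum defining $F(s_{2})$ and setting $\Phi_{n}(t):=\dE\SBRA{\ABS{\varphi_\bW((V_{1}\cdots V_{n})^{a}t)}}$, the key observation is that the scalar factors $V_{i}^{a}$ preserve directions, so
\begin{equation*}
\Phi_{n}(t)\leq 1-\varepsilon\,\dP\bigl(s_{1}\leq (V_{1}\cdots V_{n})^{a}\NRM{t}\leq s_{2}\bigr)
\end{equation*}
holds uniformly in the direction of $t$. Applying this to both factors of the inequality $1-\varepsilon=\ABS{\varphi_\bW(s_{2}u_{2})}\leq\dE\SBRA{\Phi_{n}(V^{a}s_{2}u_{2})\,\Phi_{n}((1-V)^{a}B^{T}s_{2}u_{2})}$ and then letting $\varepsilon\to 0$ followed by $n\to\infty$ (using $\dE[V^{1/a}]<1$ to force $V_{1}\cdots V_{n}\to 0$ in probability) would recover verbatim the contradictory inequality of the form $2\leq 1$ of the 1D proof.
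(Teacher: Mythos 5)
Your proposal is correct, and its skeleton is the same as the paper's: transfer the four steps of the one-dimensional proof of Theorem~\ref{thm:fct-carac-X} to the functional identity $\varphi_\bW(t)=\dE\SBRA{\varphi_\bW(V^{a}t)\,\varphi_\bW((1-V)^{a}B^{T}t)}$, then Fourier inversion. Where you genuinely diverge is on the delicate point the paper itself flags, namely that $B$ (a power of $K$) is not an isometry. The paper handles this by enlarging the object under study: it sets $\psi_\bW(r)=\max_{\NRM{t}=r}\max_{0\leq k\leq 2d-1}\ABS{\varphi_{K^{k}\bW}(t)}$, and because $K^{2d}=I_{2d-1}$ the family $\{K^{k}\bW\}$ is stable under the fixed-point recursion, which yields the closed inequality $\psi_\bW(r)\leq\dE\SBRA{\psi_\bW(rV^{a})\psi_\bW(r(1-V)^{a})}$ and lets Steps 3--4 run as an exact copy of the 1D case (this is the Chauvin--Liu--Pouyanne device). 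You instead keep the plain radial envelope $F(s)=\sup_{\NRM{u}=1}\ABS{\varphi_\bW(su)}$ and absorb the non-isometry through the uniform norm equivalence $c\NRM{t}\leq\NRM{B^{T}t}\leq C\NRM{t}$; this is legitimate because your bound $\Phi_n(t)\leq 1-\varepsilon\,\dP\bigl(s_1\leq(V_1\cdots V_n)^{a}\NRM{t}\leq s_2\bigr)$ depends only on $\NRM{t}$, but it does mean the reduction is not quite ``verbatim'': the second probability becomes $\dP\bigl(s_1\leq(V_1\cdots V_n)^{a}(1-V)^{a}s_2\NRM{B^{T}u_2}\leq s_2\bigr)$ with $u_2=u_2(\varepsilon)$, and you must use $c\leq\NRM{B^{T}u_2}\leq C$ (uniformly in $\varepsilon$ and in the realization of $B$) together with $(V_1\cdots V_n)^{a}\to 0$ to see that it still tends to $1$ after $\varepsilon\to0$, $n\to\infty$, producing the same ``$2\leq1$'' contradiction. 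Spelled out, this works, and it is marginally more economical than the paper's route (you use the cyclicity $B^{2d}=I_{2d-1}$ only through invertibility and the resulting norm bounds, rather than to stabilize an orbit of characteristic functions); the paper's $\psi_\bW$ buys an exactly self-reproducing inequality at the cost of tracking all $2d$ rotated copies of $\bW$. Your Steps 1, 2 and 4, and the final inversion argument (choosing $k\rho>2d-1+j$), coincide with the paper's up to these cosmetic differences.
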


To prove Theorem~\ref{thm:density-W-dimd}, we adapt the arguments of the proof of Theorem~\ref{thm:fct-carac-X} to a higher dimension. The proof is divided in three different lemmas. 
 
\begin{lem}\label{lem:phiW}
Let $\bW$ be a solution of the fixed-point equation \eqref{eq:fixed-point-W-dimd} satisfying the assumptions of Theorem~\ref{thm:density-W-dimd}. Its characteristic function satisfies $\ABS{\varphi_\bW(t)}<1$ for all non-zero $t\in\dR^{2d-1}$.
\end{lem}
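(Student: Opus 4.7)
The plan is to mimic the one-dimensional argument of Step~1 in the proof of Theorem~\ref{thm:fct-carac-X}, replacing the use of $\Supp(W)=\dR$ with the standing hypothesis $\Supp(\bW)=\dR^{2d-1}$. The key observation is that $|\varphi_\bW(t_0)|=1$ is a very rigid condition: it forces $\ee^{i\SCA{t_0,\bW}}$ to be deterministic, so $\SCA{t_0,\bW}$ must lie a.s.\ in a discrete arithmetic progression, which is incompatible with the support being all of $\dR^{2d-1}$ as soon as $t_0\ne 0$.

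Concretely, I would argue as follows. Fix $t_0\in\dR^{2d-1}$ with $|\varphi_\bW(t_0)|=1$, and write $\varphi_\bW(t_0)=\ee^{i\theta_0}$ for some $\theta_0\in\dR$. Then
\begin{equation*}
\dE\SBRA{1-\cos\PAR{\SCA{t_0,\bW}-\theta_0}} = 1 - \mathrm{Re}\PAR{\ee^{-i\theta_0}\varphi_\bW(t_0)} = 0.
\end{equation*}
Since the integrand is non-negative, this forces $\cos(\SCA{t_0,\bW}-\theta_0)=1$ almost surely, i.e.\ $\SCA{t_0,\bW}\in \theta_0+2\pi\dZ$ a.s. Hence the support of the real random variable $\SCA{t_0,\bW}$ is included in the closed set $\theta_0+2\pi\dZ$.

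On the other hand, if $t_0\neq 0$ the linear form $\bw\mapsto \SCA{t_0,\bw}$ is continuous and surjective from $\dR^{2d-1}$ onto $\dR$. Using the hypothesis $\Supp(\bW)=\dR^{2d-1}$, the support of $\SCA{t_0,\bW}$ equals $\overline{\SCA{t_0,\Supp(\bW)}}=\dR$, contradicting the inclusion into $\theta_0+2\pi\dZ$. Thus we must have $t_0=0$, which concludes the proof. The only step that required any care is the application of the support hypothesis, and in particular the transfer of support through a continuous surjective linear map; the rest is a direct adaptation of the one-dimensional argument. Note that the random-matrix/fixed-point structure \eqref{eq:fixed-point-W-dimd} is not actually needed here, since the support assumption does all the work once we know $\ee^{i\SCA{t_0,\bW}}$ is a.s.\ constant.
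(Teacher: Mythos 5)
Your proof is correct and follows essentially the same route as the paper: the paper notes that the support hypothesis $\Supp(\bW)=\dR^{2d-1}$ gives $\Supp(\SCA{t_0,\bW})=\dR$ for $t_0\neq 0$ and then invokes the one-dimensional argument (Step~1 of Theorem~\ref{thm:fct-carac-X}), which is exactly the computation you carry out, only written in more detail. Your observation that the fixed-point structure \eqref{eq:fixed-point-W-dimd} is not needed here is also consistent with the paper, whose argument likewise uses only the support assumption.
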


\begin{proof}
Using our assumption \eqref{eq:support_W_hypothesis} on the support of $\bW$,  we remark that for $ t_0\neq 0$, $\mathrm{Supp}(\SCA{t_0,\bW})=\dR$. Consequently, the proof is exactly the same as in dimension $1$ (see Step~1 in the proof of Theorem~\ref{thm:fct-carac-X}).
\end{proof}

\begin{lem}
\label{lem:phiW-to-0}
Let $\bW$ be a solution of the fixed-point equation \eqref{eq:fixed-point-W-dimd} satisfying the assumptions of Theorem~\ref{thm:density-W-dimd}. Then ${\lim_{\NRM{t}\to \infty}\varphi_\bW(t)=0}$.
\end{lem}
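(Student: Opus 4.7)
Since $|\varphi_\bW|\geq 0$, it suffices to show $\ell:=\limsup_{\|t\|\to\infty}|\varphi_\bW(t)|=0$, and I will mirror Steps~2 and~3 of the proof of Theorem~\ref{thm:fct-carac-X}. First, for the dichotomy $\ell\in\{0,1\}$, I apply the fixed-point equation~\eqref{eq:fixed-point-W-dimd}, conditioning on $V$, to obtain
\[
|\varphi_\bW(t)|\leq \dE\bigl[|\varphi_\bW(V^a t)|\cdot|\varphi_\bW((1-V)^a B^T t)|\bigr].
\]
Since $V,\,1-V\in(0,1)$ a.s.\ and $B$ takes only finitely many invertible values $K^0,\ldots,K^{2d-1}$ (whose smallest singular values are bounded below by a deterministic $\sigma_->0$), both $\|V^a t\|$ and $\|(1-V)^a B^T t\|$ tend to infinity a.s.\ as $\|t\|\to\infty$; a reverse-Fatou argument on the uniformly bounded integrand then yields $\ell\leq \ell^2$, hence $\ell\in\{0,1\}$.

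To rule out $\ell=1$, I argue by contradiction and construct a bad shell. By Lemma~\ref{lem:phiW} together with the support hypothesis~\eqref{eq:support_W_hypothesis}, the continuous function $m(r):=\sup_{\|s\|=r}|\varphi_\bW(s)|$ satisfies $m(0)=1$ and $m(r)<1$ for all $r>0$. Fixing $r^*>0$ with $m(r^*)<1$ and letting $(r_1(\epsilon),r_2(\epsilon))$ denote the connected component of $\{m<1-\epsilon\}$ containing $r^*$, one has $|\varphi_\bW|\leq 1-\epsilon$ on the shell $R_\epsilon=\{r_1(\epsilon)\leq\|s\|\leq r_2(\epsilon)\}$, with $r_2(\epsilon)\geq r^*>0$ and, by the very same limit-point argument as in Step~3 of the one-dimensional proof, $r_1(\epsilon)\to 0$ as $\epsilon\to 0$. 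I then pick $t_\epsilon$ on the sphere $\|s\|=r_2(\epsilon)$ attaining $|\varphi_\bW(t_\epsilon)|=m(r_2(\epsilon))=1-\epsilon$. Iterating only the scalar factor $n$ times gives
\[
\phi_n(s) := \dE[|\varphi_\bW(sM_n)|]\leq 1-\epsilon\,\dP(sM_n\in R_\epsilon),\qquad M_n:=V_1^a\cdots V_n^a,
\]
and substituting into~\eqref{eq:fixed-point-W-dimd} at $t=t_\epsilon$, then expanding $(1-\epsilon P_1)(1-\epsilon P_2)$ exactly as in the 1D case, produces the summed-probability inequality $\dE[P_1+P_2]\leq 1+\epsilon$, where $P_1,P_2$ are the conditional probabilities that $V^a t_\epsilon M_n$, respectively $(1-V)^a B^T t_\epsilon M_n$, falls in $R_\epsilon$.

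Letting $\epsilon\to 0$ with $n$ fixed produces the required contradiction. Using $\|t_\epsilon\|=r_2(\epsilon)$, one checks $\dE[P_1]=\dP(M_{n+1}\geq r_1(\epsilon)/r_2(\epsilon))\to 1$ since $r_1(\epsilon)/r_2(\epsilon)\to 0$. Restricting $P_2$ to the event $\{B=I\}$, which has probability $p>0$ and preserves both norm and direction of $t_\epsilon$, reduces this branch to exactly the one-dimensional situation and gives $\liminf_{\epsilon\to 0}\dE[P_2]\geq p$; thus $\liminf\dE[P_1+P_2]\geq 1+p>1$, contradicting $\dE[P_1+P_2]\leq 1+\epsilon\to 1$. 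The main obstacle compared to the one-dimensional case is the construction of the shell $R_\epsilon$ with inner radius vanishing with $\epsilon$ and outer radius bounded below, which rests on the support hypothesis~\eqref{eq:support_W_hypothesis} (through Lemma~\ref{lem:phiW}) guaranteeing that $|\varphi_\bW|$ attains the value $1$ only at the origin. The non-orthogonal rotations $B=K^j$ cause no further trouble since the $B=I$ branch alone already delivers the contradiction.
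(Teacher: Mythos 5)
Your argument is correct, and it reaches the same conclusion as the paper while handling the key difficulty in a genuinely different way. The skeleton is shared: the dichotomy $\limsup|\varphi_\bW|\in\{0,1\}$ via the fixed-point identity and (reverse) Fatou, then a contradiction built from a shell $R_\varepsilon$ whose inner radius tends to $0$ (this is where the support hypothesis \eqref{eq:support_W_hypothesis} enters through Lemma~\ref{lem:phiW}, exactly as in the paper). Where you diverge is in the treatment of the non-isometric matrices $K^j$. The paper's device is the auxiliary function $\psi_\bW(r)=\max_{\NRM{t}=r}\max_k|\varphi_{K^k\bW}(t)|$, which, thanks to the cyclicity $K^{2d}=I_{2d-1}$, satisfies the same scalar recursive inequality $\psi_\bW(r)\leq\dE[\psi_\bW(rV^a)\psi_\bW(r(1-V)^a)]$ as in dimension $1$, so that Step~3 of Theorem~\ref{thm:fct-carac-X} can be repeated verbatim and yields the full bound ``$2\leq 1$''. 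You instead discard the matrix branch entirely during the iteration (keeping only the scalar factor $V^a$, which is legitimate since the dropped factor is bounded by $1$) and, at the last step, retain from the second branch only the isometric event $\{B=I\}$; this gives the weaker but sufficient contradiction $1+p_0>1$ (note that in the general setting of Theorem~\ref{thm:density-W-dimd} the relevant probability is $p_0>0$ rather than $p$). Your route is arguably more economical — it needs no orbit-maximum and works for any single value of $n$, indeed the limit $\delta_\varepsilon=r_1(\varepsilon)/r_2(\varepsilon)\to0$ alone drives $\dE[P_1]\to1$ — while the paper's $\psi_\bW$ device preserves the exact structure of the one-dimensional proof and exploits the full symmetry of the orbit $\{K^k\bW\}$. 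Both rest on the same shell construction and on $|\varphi_\bW|<1$ off the origin; I see no gap in your version (the only points worth stating explicitly in a write-up are the continuity of $r\mapsto\sup_{\NRM{s}=r}|\varphi_\bW(s)|$, the boundedness of the component $\{m<1-\varepsilon\}$ under the contradiction hypothesis $\limsup_{r\to\infty}m(r)=1$, and the deterministic inequality $|\varphi_\bW(u)|\leq\phi_n(u)$ used pointwise inside the outer expectation over $(V,B)$).
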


\begin{proof}
    Let $t\in\dR^{2d-1}$. By \eqref{eq:fixed-point-W-dimd} and by conditioning on $V$, we remark that
    \begin{equation}\label{eq:fixed-point-phiW}
\varphi_\bW(t)=p_0\dE\SBRA{\varphi_\bW(V^{a}t)\varphi_\bW((1-V)^{a}t)} +\sum_{k=1}^{2d-1}p_k\dE\SBRA{\varphi_{\bW}(V^{a}t)\varphi_{K^k\bW}\PAR{(1-V)^{a}t}}.
    \end{equation}
    Consequently,
    \[
    \ABS{\varphi_\bW(t)}\leq p_0\dE\SBRA{\ABS{\varphi_\bW(tV^{a})}\ABS{\varphi_\bW(t(1-V)^{a})}} +(1-p_0)\dE\SBRA{\ABS{\varphi_{\bW}(tV^{a})}}.
    \]
    We conclude as in dimension $1$ (Step~$2$ in the proof of Theorem~\ref{thm:fct-carac-X}), that 
    \begin{equation}
    \label{eq:reasoning_pa}
    \limsup_{\NRM{t}\to \infty}\ABS{\varphi_\bW(t)}\in\BRA{0,1}.
    \end{equation}
        We now prove the result by contradiction, assuming that the above limit \eqref{eq:reasoning_pa} is $1$. The difficulty is that $K$ is not an isometric matrix ($\NRM{K\bw}\neq \NRM{\bw}$), however it satisfies $K^{2d}=I_{2d-1}$.

Adapting Step~$3$ in the proof of Theorem~\ref{thm:fct-carac-X} in dimension $1$, and the idea used in the proof of \cite[Thm~7.1]{Chauvin-Liu-Pouyanne2014}, we introduce the function $\psi_\bW$  defined for $r\geq0$ by
    \begin{equation*}   \psi_\bW(r):=\max_{\NRM{t}=r}\max_{k\in\BRA{0,\ldots,2d-1}}\ABS{\varphi_{K^k\bW}(t)}
    =\max_{\NRM{t}=r}\max_{k\in\BRA{0,\ldots,2d-1}}\ABS{\varphi_{\bW}\PAR{\PAR{K^k}^T t}}.
    \end{equation*}
   By continuity of the function $\ABS{\varphi_\bW}$, compactness of the subset $\{t\in\dR^{2d-1}:\NRM{t}=r\}$, invertibility of $K$ and Lemma~\ref{lem:phiW}, we deduce that $\psi_\bW(0)=1$  and $\psi_\bW(r)<1$ for all $r>0$.
    
    By assumption of the proof by contradiction, ${\limsup_{r\to \infty}\psi_\bW(r)=1}$. 
      The function $\psi_\bW$ is continuous with $\psi_\bW(0)=1$, and for all $r>0$,  $\psi_\bW(r)<1$. Let $r_0>0$ be fixed and $\varepsilon>0$ such that $\psi_\bW(r_0)< 1-\varepsilon$. By the intermediate value theorem, there exists $(r_1,r_2)=(r_1(\varepsilon),r_2(\varepsilon))$  such that $0<r_1<r_0<r_2$ and
    \[
    \psi_\bW(r_1)=\psi_\bW(r_2)=1-\varepsilon \quad \text{ and }\quad \psi_\bW(r)\leq 1-\varepsilon,\quad \forall r\in(r_1,r_2).
    \]
    Moreover, taking a sequence $(\varepsilon_n)_{n\geq 0}$ converging to $0$, we deduce by Lemma \ref{lem:phiW} that $0$ is the unique accumulation point of $r_1(\varepsilon_n)$. Then $\lim_{\varepsilon\to 0}r_1(\varepsilon)=0$.
    
    Since $V\in(0,1)$ a.s., we deduce from \eqref{eq:fixed-point-W-dimd} and $K^{2d}=I_{2d-1}$ that for $\NRM{t}=r$, $k\in\BRA{0,\ldots, 2d-1}$,
      \begin{equation*}
\ABS{\varphi_{K^k\bW}(t)}
\leq \dE\SBRA{\psi_\bW\PAR{rV^{a}}\psi_\bW\PAR{r(1-V)^{a}}}.
    \end{equation*}
    Then
    \begin{equation}\label{eq:majo-phi-V-dimd}
    \psi_{\bW}(r)\leq \dE\SBRA{\psi_\bW\PAR{rV^{a}}\psi_\bW\PAR{r(1-V)^{a}}}.
    \end{equation}
    Introducing $\phi_n(r)=\psi_\bW(rV_1^a\cdots V_n^a)$  with $(V_1,\ldots,V_n)$ i.i.d.\ copies of $V$, we obtain by induction
   \[
   \psi_\bW(r)\leq \dE[\psi_\bW(rV^a)]\leq \phi_n(r).
   \]
 The remainder of the proof is exactly as in Step~$3$ of the proof of Theorem~\ref{thm:fct-carac-X}. We then conclude by contradiction that the limit \eqref{eq:reasoning_pa} should be $0$.
\end{proof}

\begin{lem} \label{lem:majo-phiW-dim3}
    Let $\bW$ be a solution of the fixed-point equation \eqref{eq:fixed-point-W-dimd} satisfying the assumptions of Theorem~\ref{thm:density-W-dimd}. Then for all $ k\geq 1$, there exists a constant $C>0$ such that for all non-zero $ t\in\dR^{2d-1}$, \eqref{eq:upper_bound_cf} holds.
\end{lem}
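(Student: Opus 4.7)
The plan is to adapt Step~4 of the proof of Theorem~\ref{thm:fct-carac-X} to the multidimensional setting, using the fixed-point identity~\eqref{eq:fixed-point-phiW} together with the decay to zero established in Lemma~\ref{lem:phiW-to-0}.

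Given $\varepsilon>0$, I first invoke Lemma~\ref{lem:phiW-to-0} to pick $T_\varepsilon>0$ such that $\ABS{\varphi_\bW(s)}\leq \varepsilon$ whenever $\NRM{s}\geq T_\varepsilon$. The only genuine novelty compared to dimension one is the presence of the matrices $K^k$ on the right-hand side of \eqref{eq:fixed-point-phiW}. Since $K^{2d}=I_{2d-1}$, each $K^k$ is invertible, so the constant
\[
c_0 := \min_{0\leq k\leq 2d-1}\ \inf_{\NRM{x}=1}\NRM{(K^k)^T x}
\]
is strictly positive. Since $\varphi_{K^k\bW}(s)=\varphi_\bW((K^k)^T s)$, it follows that $\ABS{\varphi_{K^k\bW}(s)}\leq \varepsilon$ as soon as $\NRM{s}\geq T_\varepsilon/c_0$, uniformly in $k\in\BRA{0,\ldots,2d-1}$.

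Starting from \eqref{eq:fixed-point-phiW} and splitting each expectation according to whether $(1-V)^a\NRM{t}$ lies above or below $T_\varepsilon/c_0$, I obtain
\[
\ABS{\varphi_\bW(t)}\leq \varepsilon\, \dE\SBRA{\ABS{\varphi_\bW(V^a t)}}+\dP\PAR{(1-V)^a\NRM{t}\leq T_\varepsilon/c_0}.
\]
Since $V$ (hence $1-V$) is uniform on $[0,1]$, Markov's inequality applied with any $\rho\in(0,1/a)$ yields
\[
\dP\PAR{(1-V)^a\NRM{t}\leq T_\varepsilon/c_0}\leq \frac{1}{1-a\rho}\PAR{\frac{T_\varepsilon}{c_0\NRM{t}}}^\rho.
\]
Iterating this inequality $n$ times using independence and letting $n\to\infty$, with $\varepsilon$ chosen small enough that $\varepsilon\,\dE[V^{-a\rho}]<1$, then produces $\ABS{\varphi_\bW(t)}=O(\NRM{t}^{-\rho})$, that is, \eqref{eq:upper_bound_cf} for $k=1$.

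To reach the improved decay $O(\NRM{t}^{-k\rho})$ for arbitrary $k\geq 1$, I plug the bound just obtained back into \eqref{eq:fixed-point-phiW} and exploit the fact that on the event $\{(1-V)^a\NRM{t}\leq T_\varepsilon/c_0\}$ the variable $V$ is forced close to $1$, so $V^a\NRM{t}$ remains large and the previous decay applies to $\varphi_\bW(V^a t)$. Exactly as at the end of Step~4 in the proof of Theorem~\ref{thm:fct-carac-X} (relying on the elementary bound $-\ln(1-u)\leq 2u$ on $(0,1/2)$), one iteration upgrades the exponent from $\rho$ to $2\rho$, and a straightforward induction on $k$ gives the claimed bound. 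The main obstacle is merely the bookkeeping induced by the random rotations $K^k$: one must control uniformly in $k$ the region where $\varphi_{K^k\bW}$ applied to $(1-V)^a t$ could fail to be small. This is precisely the role of the constant $c_0$ above; once it is introduced, the rest of the argument is a direct transcription of the one-dimensional proof.
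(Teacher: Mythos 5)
Your proposal is correct and follows essentially the same route as the paper: the paper also deduces from Lemma~\ref{lem:phiW-to-0} a threshold $T_\varepsilon$ that works uniformly over $\varphi_{\bW}\bigl((K^k)^T\,\cdot\,\bigr)$, splits \eqref{eq:fixed-point-phiW} on the event $\{(1-V)^a\NRM{t}\leq T_\varepsilon\}$, applies Markov's inequality with $\rho\in(0,1/a)$, and then invokes the iteration and bootstrapping of Step~4 of Theorem~\ref{thm:fct-carac-X}. Your explicit constant $c_0$ merely spells out the invertibility of the matrices $K^k$ that the paper uses implicitly when asserting the existence of the uniform $T_\varepsilon$.
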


\begin{proof}
     Let $\varepsilon\in(0,1)$ and $T_\varepsilon>0$ such that $\max_{k\in \BRA{0,\ldots,2d-1}}\ABS{\varphi_\bW\PAR{\PAR{K^k}^Tt}}\leq\varepsilon$ as soon as $\NRM{t}\geq T_\varepsilon$. The existence (finiteness)\ of $T_\varepsilon$ is guaranteed by Lemma~\ref{lem:phiW-to-0}. We have from Equation \eqref{eq:fixed-point-phiW} that for any non-zero $t$,
\begin{align*}
     \ABS{\varphi_\bW(t)} 
     &\leq \varepsilon\dE\SBRA{\ABS{\varphi_\bW(tV^a )}}+\dP((1-V)^a \NRM{t}\leq T_\varepsilon)\\
     &\leq \varepsilon\dE\SBRA{\ABS{\varphi_\bW(tV^a )}}+ \frac{1}{1-\rho a}\PAR{\frac{T_\varepsilon}{\NRM{t}}}^{\rho},
 \end{align*}
 for any $\rho\in\PAR{0,1/a}$ and $\NRM{t}>T_\epsilon$.
The conclusion follows using the same arguments as in the proof of Theorem~\ref{thm:fct-carac-X} (Step~4).
\end{proof}

\begin{proof}[Proof of Theorem~\ref{thm:density-W-dimd}]
The first point is proved in Lemma~\ref{lem:majo-phiW-dim3}, and choosing $\rho\in(0,1/a)$ and $k\geq 1$ such that $k\rho>2d-1$, we deduce that $\varphi_\bW$ is integrable on $\dR^{2d-1}$. By Fourier inversion theorem, we deduce that the random vector $\bW$ has a bounded density with respect to the Lebesgue measure on $\dR^{2d-1}$. Then, since $k\geq 1$ may be chosen arbitrarily large, we obtain that the density is smooth on $\dR^{2d-1}$.   
\end{proof}

\subsection{Support of the urn limit}
\label{sec:support}

The aim of this section is to prove that the support of the process $\bW$, defined in Theorem~\ref{thm:Janson-dimd}, is $\dR^{2d-1}$. The study of the support is not difficult for $d=2$, but as we will see it is more challenging for a general dimension $d$.
Consequently, we first show the result in dimension $2$. Then  we explain how we can study this support for any dimension $d\geq 1$, but give a complete proof only in the case $d=3$.

\paragraph{Support of the urn limit when $d=2$}

We are interested in studying non-null solutions of the fixed-point equation \eqref{eq:fixed-point-W-dimd}, with $d=2$ and
\[
     K=\begin{pmatrix}
         -1&-1&-1\\
         1&0&0\\
         0&1&0
     \end{pmatrix}\quad \text{(see \eqref{eq:def_matrix_K})}
\]
for a general probability vector $(p_k)_{0\leq k\leq 3}$, and then in our specific case (see Proposition~\ref{prop:fixed-point-dimd})
\begin{equation}
    \label{eq:specific_case}
    (p_0,p_1,p_2,p_3)=(p,\tfrac{1-p}{3},\tfrac{1-p}{3},\tfrac{1-p}{3}),\quad  \text{with\ }p=\tfrac{3a+1}{4}.
\end{equation}

\begin{prop}\label{prop:supportW-dim2} Let $a\in(1/2,1)$ and $\bW$ be a non-null solution of the fixed-point equation \eqref{eq:fixed-point-W-dimd} with a general  probability vector $(p_k)_{0\leq k\leq 3}$ with positive coordinates. 
\begin{enumerate}
    \item The support of $\bW$ is either $\mathrm{Span}\{(1,-1,1)\}$, or $\mathrm{Span}\{(1,0,-1),(0,1,0)\}$, or $\dR^3$.
\item In our specific case \eqref{eq:specific_case}, when $\bW$ is the limit \eqref{eq:conv-Udimd} of the urn process starting from $\bU(0)=\be_1$, then 
$\Supp(\bW)=\dR^3$ and $\bW$ has a smooth bounded density on $\dR^3$.
\end{enumerate}
\end{prop}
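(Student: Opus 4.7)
The plan is to use the $K$-invariant splitting $\dR^3=V_1\oplus V_2$, with $V_1=\Span\BRA{(1,-1,1)}$ the $(-1)$-eigenspace of $K$ and $V_2=\Span\BRA{(1,0,-1),(0,1,0)}$ the 2D real subspace on which $K$ acts as the rotation $R$ by $\pi/2$, to reduce to the analysis of the projections $P_1\bW$ and $P_2\bW$, classify their supports, and then promote this to a statement about $S:=\Supp(\bW)$. Projecting~\eqref{eq:fixed-point-W-dimd}, the component $P_1\bW$ satisfies a 1D equation of the form~\eqref{eq:fixed-point-X-only}, namely $P_1\bW\overset{\cL}{=}V^a P_1\bW^{(1)}+(1-V)^a\eta\, P_1\bW^{(2)}$ with $\eta\in\BRA{\pm 1}$ and $\dP(\eta=1)=p_0+p_2\in(0,1)$, while $P_2\bW\overset{\cL}{=}V^a P_2\bW^{(1)}+(1-V)^a R^K P_2\bW^{(2)}$.

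I would then show $\Supp(P_1\bW)\in\BRA{\BRA{0},V_1}$ by a mild variant of Step~1 in the proof of Theorem~\ref{thm:fct-carac-X}: if the support is non-trivial, $\dP(\eta=-1)>0$ forces $-w\in\Supp(P_1\bW)$ for every $w$ in the support, hence $[-w,w]\subset\Supp(P_1\bW)$, and iterating the $v=1/2$ homothety $t\mapsto 2^{1-a}t$ fills $V_1$. Analogously $\Supp(P_2\bW)\in\BRA{\BRA{0},V_2}$: the key observation is that $v=1/2$, $k=2$ exploits $R^2=-I$ to force $0\in\Supp(P_2\bW)$ as soon as the support is non-trivial, after which combinations with $0$ give the rays $\dR\cdot R^k w$ for $k\in\BRA{0,1,2,3}$, and combinations of the form $v^a\alpha w+(1-v)^a\beta Rw$ fill $V_2$. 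Three of the four cases are immediate: both projections zero contradicts non-nullity, and a single projection zero yields $S=V_1$ or $S=V_2$. There remains the nondegenerate case where $P_1 S$ and $P_2 S$ are dense in $V_1$ and $V_2$.

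In the nondegenerate case, the identities $w+K^2 w=2P_1 w$ and $w+Kw=(I+R)P_2 w$ let me evaluate the closure at $v=1/2$, with $k=2$ and $k=1$ respectively, to get $2^{1-a}P_1 w\in S$ and $2^{-a}(I+R)P_2 w\in S$ for every $w\in S$. Density of $P_i S$ in $V_i$, closedness of $S\cap V_i$, and invertibility of $I+R$ on $V_2$ (its determinant equals $2$) then give $V_1\cup V_2\subseteq S$; in particular $0\in S$. This upgrades $S$ to a cone, combining $(1-v)^a K^k w\in S$ with the scaling $cw\in S$ for $c\geq 1$ obtained from $(v^a+(1-v)^a)w\in S$; iterating the closure at $v=1/2$ then shows $S$ is stable under non-negative linear combinations of its elements. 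The Cayley-Hamilton-type identity $I+K+K^2+K^3=0$ yields $-w=Kw+K^2w+K^3w\in S$, so $S$ is a $K$-invariant linear subspace of $\dR^3$; since it contains $V_1\oplus V_2=\dR^3$, we conclude $S=\dR^3$.

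For Part~2, Theorem~\ref{thm:Janson-dimd} combined with~\eqref{eq:lienX-W} gives $\dE[\bW]=\frac{1}{2\Gamma(a+1)}(1,1,1)$, and since $(1,1,1)=(1,-1,1)+2(0,1,0)$ has non-zero components on both $V_1$ and $V_2$, neither projection is a.s.\ zero; the nondegenerate case of Part~1 applies and $\Supp(\bW)=\dR^3$, whereupon Theorem~\ref{thm:density-W-dimd} supplies the smooth bounded density. I expect the main obstacle to be the nondegenerate case of Part~1: one must choose $(v,k)$ so that the closure isolates pure $V_1$- or $V_2$-components of $S$, which is precisely the content of the identities $I+K^2=2P_1$ and $I+K=(I+R)P_2$ at $v=1/2$, and then promote density of the projections to actual containment via the closedness of $S\cap V_i$ and the invertibility of $I+R$.
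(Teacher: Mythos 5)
Your proof is correct, and it takes a genuinely different route from the paper. The paper argues pointwise: for a nonzero $\bw=(x,y,z)$ in the support it studies the Krylov span $\Span\{\bw,K\bw,K^2\bw\}$ through the determinant identity $\det(\bw,K\bw,K^2\bw)=(x+z)\bigl((x+y)^2+(y+z)^2\bigr)$, splits into the three cases $\bw\in\Span\{(1,-1,1)\}$, $\bw\in\Span\{(1,0,-1),(0,1,0)\}$, or neither, and in the last case uses positive-span arguments together with $K^2\bw+K^3\bw=-(\bw+K\bw)$ to fill $\dR^3$; for Part 2 it excludes the two proper supports by the fact that all coordinates of $\bW$ have the same non-zero mean. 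You instead exploit the $K$-invariant splitting $\dR^3=V_1\oplus V_2$, project the fixed-point equation \eqref{eq:fixed-point-W-dimd} onto each factor (legitimate since the spectral projections commute with $K$ and with the independent matrix $B$), prove a zero-or-full dichotomy for each projected support, and in the mixed case pull $V_1$ and $V_2$ into $\Supp(\bW)$ via $I+K^2=2P_1$, the invertibility of $(I+K)|_{V_2}$, and closedness, before upgrading to a cone and then to $\dR^3$; your Part 2 then only needs that $P_1\dE[\bW]$ and $P_2\dE[\bW]$ are non-zero, which follows from \eqref{eq:lienX-W} and \eqref{eq:expectation-W-2d} (your value $\dE[\bW]=\tfrac{1}{2\Gamma(a+1)}(1,1,1)$ is the correct one for $\bW=-2(Y_2,Y_3,Y_4)$; the paper's displayed constant is that of $(Y_2,Y_3,Y_4)$, an immaterial discrepancy since only equality and non-vanishing of the coordinates are used). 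Your organization has the advantage of handling explicitly the a priori possibility that the support meets both $V_1\setminus\{0\}$ and $V_2\setminus\{0\}$ without containing a point outside $V_1\cup V_2$ (the paper treats this only implicitly, via a further mixing step with $B=I_3$ that is not spelled out), and it cleanly isolates where each hypothesis $p_k>0$ enters. The paper's determinant/Krylov formulation, on the other hand, is the one that scales to general $d$ (it is exactly what Proposition~\ref{prop:det-KW} and Corollary~\ref{cor:SuppW3} generalize), whereas your eigenspace-by-eigenspace classification would require handling the growing number of $K$-invariant planes in higher dimension.
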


\begin{proof}
Let $\bw\in\mathrm{Supp }(\bW)$ with $\bw\neq 0$. 
Then by Equation \eqref{eq:fixed-point-W-dimd}, since $\dP(B=I_3)>0$, $t^{a}\bw+(1-t)^{a}\bw\in\mathrm{Supp } (\bW)$ for any $t\in[0,1]$. We deduce that $\mathrm{Supp }(\bW)$ is not a singleton ($a\neq 1$).
Since $\dP(B=K^k)>0$ for $k\in\BRA{1,2,3}$, we also have $K\bw, K^2\bw, K^3\bw\in\mathrm{Supp } (\bW)$. 
We remark that if $\bw=(x,y,z)$, then the determinant
\begin{equation}
\label{eq:formula_determinant}
   \mathrm{det}\PAR{\bw,K\bw,K^2\bw}=(x+z)\PAR{(x+y)^2+(y+z)^2}. 
\end{equation}

We now examine three cases. If first $\bw\in \Span\{(1,-1,1)\}$, then $K\bw=-\bw$. Using similar arguments as in dimension $1$, we deduce that 
\begin{equation*}
    \Span\{\bw\}=\Span\{(1,-1,1)\}\subset \Supp (\bW).
\end{equation*}
We also note that 
$K(\mathrm{Span}\{(1,-1,1)\})=\mathrm{Span}\{(1,-1,1)\}$, which implies that $\Supp(\bW)=\mathrm{Span}\{(1,-1,1)\}$ is possible.

If now $\bw\in \mathrm{Span}\{(1,0,-1),(0,1,0)\}$, then $K^2\bw=-\bw$. We deduce 
\begin{equation*}
   \mathrm{Span}\{\bw,K\bw\}=\mathrm{Span}\{(1,0,-1),(0,1,0)\}\subset \mathrm{Supp }(\bW).
\end{equation*}
We note that $K(\Span\{(1,0,-1),(0,1,0)\})=\Span\{(1,0,-1),(0,1,0)\}$, and then $\Supp(\bW)=\Span\{(1,0,-1),(0,1,0)\}$ is also possible.

If finally $\bw\notin \mathrm{Span}\{(1,-1,1)\}$ and $\bw\notin \mathrm{Span}\{(1,0,-1),(0,1,0)\}$, then $\bw,K\bw,K^2\bw$ are  linearly independent by \eqref{eq:formula_determinant}. By \eqref{eq:fixed-point-W-dimd}, for every $\bv,\bv'\in \BRA{\bw,K\bw,K^2\bw}$ and all $t\in[0,1]$, $t^{a}\bv+(1-t)^{a}\bv'\in \mathrm{Supp }(\bW) $. 
Then, as in dimension $1$, we deduce $[\bv,\bv']\subset \mathrm{Supp }(\bW)$ and $2^{1-a}\bv\in \mathrm{Supp }(\bW)$ (taking $t=1/2$). 
We obtain that 
\[
\mathrm{Span}_+\{\bw,K\bw,K^2\bw\}:=\BRA{\sum_{k=0}^2\lambda_kK^{k}\bw: \lambda_k\in\dR_+}\subset \mathrm{Supp }(\bW).
\]
Moreover, we observe that 
$
\bw+K\bw$ and $ K\bw+K^2\bw$ belong to $ \mathrm{Span}\{(1,0,-1),(0,1,0)\}
$, then $K^2\bw+K^3\bw=-(\bw+K\bw)$ and $K^3\bw+\bw= -(K\bw+K^2\bw)$. We  deduce that $-\bw,-K\bw, -K^2\bw\in \mathrm{Supp }(\bW)$, and consequently
\begin{equation*}
   \mathrm{Span }\{\bw,K\bw,K^2\bw\}=\dR^3=\mathrm{Supp }(\bW).
\end{equation*}
The first result of the proposition is proved.

Let us now consider the specific case of the asymptotic urn process starting from $\bU(0)=\be_1$. By Theorem \ref{thm:Janson-dimd}, we have $\dE[\bW]=\frac{-1}{4\Gamma(a+1)}(1,1,1)^T$, which means that all coordinates of $\bW$ have the same expectation. We easily deduce that, under this condition, the only possible support for $\bW$ is $\Supp(\bW)=\dR^3$. We conclude by Theorem~\ref{thm:density-W-dimd}.
\end{proof}
\paragraph{Support of $\bW$ for a general dimension $d\geq 1$}

As in the case $d=2$, the support of a general non-null solution $\bW$ of the  fixed-point equation \eqref{eq:fixed-point-W-dimd} is not unique (we exclude the case $\bW=0$ a.s.).

Let $\bw\in\mathrm{Supp}(\bW)$ with $\bw\neq 0$. If $\bw$ is such that its determinant
\begin{equation}
    \label{eq:det_to_study}
    \det(\bw,K\bw,\ldots,K^{2d-2}\bw)
\end{equation}
is non-zero, then using the same proof as in dimension $2$ (see Proposition \ref{prop:supportW-dim2}), we deduce that $\mathrm{Supp}(\bW)=\dR^{2d-1}$. 
We thus want to identify all vectors $\bw\in\dR^{2d-1}$ which have the property that the determinant \eqref{eq:det_to_study} is (non-)zero.
To that purpose, it is convenient to introduce the Krylov space $\Span\{\bw,K\bw,\ldots,K^{2d-2}\bw\}$ generated by $\bw$ (see e.g.\ \cite{Cheon-Hana2013}). Let us mention that for all $\ell\in\BRA{0,\ldots, 2d-1}$, \begin{equation*}
\Span\{(K^j\bw)_{0\leq j\leq 2d-1, j\neq \ell}\}=\Span\{\bw,K\bw,\ldots,K^{2d-2}\bw\},
\end{equation*}
since $K^{2d}=I_{2d-1}$.

\begin{prop}
\label{prop:det-KW}
    For $\bw=(w_1,\ldots,w_{2d-1})\in\dR^{2d-1}$, the determinant \eqref{eq:det_to_study} is zero if and only if at least
one of the following linear equations is satisfied
\begin{equation*}
    \sum_{k=1}^{2d-1}w_k(\ee^{i\frac{kj\pi}{d}}-1)=0,
\end{equation*}
with $j\in\BRA{1,\ldots,2d-1}$.
The codimension of $\mathrm{Span}\{\bw,K\bw,\ldots,K^{2d-2}\bw\}$ is exactly the number of the above equations satisfied.
\end{prop}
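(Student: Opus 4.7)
The plan is to diagonalize $K$ and reduce the vanishing of the Krylov determinant to a clean spectral condition.

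The first step is to identify $K$ as the matrix of the cyclic shift $J_{2d}$ (appearing in Section~\ref{subsec:MERW}) acting on the hyperplane $H = \BRA{x \in \dR^{2d} : x_1 + \cdots + x_{2d} = 0}$, expressed in the basis $\beta_i := \be_i - \be_{2d}$, $i = 1, \ldots, 2d-1$. A direct column-by-column check gives $J_{2d}\beta_i = \beta_{i+1} - \beta_1$ for $i \leq 2d-2$ and $J_{2d}\beta_{2d-1} = -\beta_1$, which is exactly the column structure of $K$. Since $J_{2d}^{2d} = I$ has eigenvalues the $2d$-th roots of unity, and the eigenvalue $1$ corresponds to the direction $\sum_i \be_i$ transverse to $H$, the matrix $K$ is diagonalizable over $\dC$ with pairwise distinct eigenvalues
\[
\zeta_j = \ee^{ij\pi/d},\qquad j = 1,\ldots, 2d-1.
\]

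The second step factors the Krylov determinant spectrally. Writing $K = PDP^{-1}$ with $D = \mathrm{diag}(\zeta_1,\ldots,\zeta_{2d-1})$, and decomposing $\bw = \sum_j c_j \bu_j$ along a basis of right eigenvectors, one has $K^m \bw = \sum_j c_j \zeta_j^m \bu_j$, so that
\begin{equation*}
\det\PAR{\bw, K\bw, \ldots, K^{2d-2}\bw} \,=\, \det(P)\cdot \prod_{j=1}^{2d-1} c_j \cdot \det\PAR{\zeta_j^m}_{1\leq j\leq 2d-1,\,0\leq m\leq 2d-2}.
\end{equation*}
The last factor is a Vandermonde in the distinct $\zeta_j$, hence non-zero, while $\det(P)\neq 0$ by diagonalizability. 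Consequently, the Krylov determinant vanishes if and only if $c_j = 0$ for at least one $j$, and more precisely, the codimension of $\Span\BRA{\bw, K\bw, \ldots, K^{2d-2}\bw}$ equals the number of vanishing $c_j$. Although $\bw$ is real and the $c_j$ complex a priori, this count agrees with the real codimension thanks to the conjugate symmetry $\zeta_{2d-j} = \overline{\zeta_j}$: the equations $c_j = 0$ and $c_{2d-j} = 0$ are equivalent for real $\bw$ (they jointly remove one complex dimension, i.e.\ two real dimensions), and the case $j = d$ with $\zeta_d = -1$ is self-conjugate and real.

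Finally, I would compute the left eigenvectors $\ell_j$ of $K$ explicitly, so that $c_j = \ell_j \bw$. Writing $\ell_j K = \zeta_j \ell_j$ coordinate by coordinate yields the recurrence $\ell_{j,m+1} = \ell_{j,1} + \zeta_j \ell_{j,m}$, whose normalized solution is $\ell_{j,m} = 1 - \zeta_j^m$; the closing boundary condition $-\ell_{j,1} = \zeta_j \ell_{j,2d-1}$ is automatic because $\zeta_j^{2d} = 1$. Substituting, the condition $c_j = \ell_j \bw = 0$ becomes
\begin{equation*}
\sum_{k=1}^{2d-1} w_k\,\PAR{\ee^{ikj\pi/d} - 1} = 0,
\end{equation*}
as announced. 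The only point requiring care is the real/complex bookkeeping in the codimension statement, handled via the conjugate pairing above; everything else reduces to the standard fact that a diagonalizable operator has a full Krylov orbit precisely on vectors with non-zero coefficient on every eigenvector.
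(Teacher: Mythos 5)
Your proof is correct and follows essentially the same route as the paper: diagonalize $K$, factor the Krylov determinant as $\det(P)$ times a Vandermonde in the distinct eigenvalues times the product of the eigen-coefficients of $\bw$, and identify the vanishing of each coefficient with one of the stated linear equations. The only differences are in implementation — you get the spectrum by viewing $K$ as the cyclic shift $J_{2d}$ restricted to the zero-sum hyperplane and you extract the coefficient conditions from the left eigenvectors $\ell_{j,m}=1-\zeta_j^m$ obtained by a one-line recurrence, whereas the paper solves for the right eigenvectors and computes $P^{-1}$ explicitly from the relations $\SCA{\bz_k,\bz_\ell}$ (yielding the same row vectors up to scaling); your real-versus-complex codimension bookkeeping, though loosely phrased ("one complex dimension"), reaches the correct count, which can also be justified simply by noting that the rank of a real matrix is the same over $\dR$ and over $\dC$.
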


\begin{proof}
Let $\PAR{\bbf_i}_{1\leq i\leq 2d-1}$ be the canonical basis of $\dR^{2d-1}$. We first prove that for $\bw =\bbf_1$, the determinant \eqref{eq:det_to_study} is non-zero. To that aim, we study the matrix $K$ in \eqref{eq:def_matrix_K}.  We have
$\bw=\bbf_1$ and for $\ell\in\BRA{1,\ldots,2d-2}$,  $K^\ell \bbf_1=-\bbf_\ell+\bbf_{\ell+1}$. As a consequence 
\begin{equation*}
    \mathrm{Span}\{\bbf_1,K\bbf_1,\ldots,K^{2d-2}\bbf_1\}=\dR^{2d-1},
\end{equation*}
which proves our claim about \eqref{eq:det_to_study} being non-zero for $\bw =\bbf_1$.

As a second step, we give a formula for the determinant \eqref{eq:det_to_study}. The matrix $K$ satisfies $K^{2d}=I_{2d-1}$. It is easy to see that $1$ is not an eigenvalue of $K$. The eigenvalues in $\dC$ of $K$ are $\eta_k=\eta^{k}$ with $\eta=\ee^{i\pi/d}$. Solving $K \bz=\eta^k \bz$, we obtain the respective eigenvectors, which are given by
\begin{equation*}
   \bz_k=\bigl(\eta^{-k},\eta^{-2k},\ldots,\eta^{-(2d-1)k}\bigr)^T\in\dC^{2d-1},
\end{equation*}
for $k\in\BRA{1,\ldots, 2d-1}$. When taking $k=d$, we observe that $-1$ is an eigenvalue with associated eigenvector $\bz_d=\PAR{-1,1,-1,\ldots,1,-1}$. It is the unique real eigenvalue of $K$. We introduce the matrix $P=(\bz_1,\ldots,\bz_{2d-1})$, and for $\bw\in\dR^{2d-1}$, we define
\begin{equation}
\label{eq:def-u}
   \bu:=P^{-1}\bw=(u_1,\ldots,u_{2d-1}).
\end{equation}
As $K=P\mathrm{diag}(\eta,\eta^2,\ldots,\eta^{2d-1})P^{-1}$,
we deduce that
\begin{equation*}
    \det(\bw,K\bw,\ldots,K^{2d-2}\bw)=\det(P)\det\PAR{\PAR{\eta^{(i-1)(k-1)}}_{1\leq i,k\leq 2d-1}}\prod_{j=1}^{2d-1}u_j.
\end{equation*}
Our previous computation with $\bw=\bbf_1$ shows that the second determinant in the right-hand side of the above identity is non-zero (this could also be seen using a direct computation). Accordingly, the determinant \eqref{eq:det_to_study} is zero if and only if there exists $j$ such that $u_j=0$. Moreover, we have
\begin{equation*}
    \dim\bigl(\mathrm{Span}\{\bw,K\bw,\ldots,K^{2d-2}\bw\}\bigr)=\mathrm{card}\BRA{j:u_j\neq 0}.
\end{equation*} 

We now express the matrix $P^{-1}$, as it gives an explicit relation between $\bu$ and $\bw$, see \eqref{eq:def-u}. More precisely, we show that
\begin{equation}
\label{eq:inverse-P}
P^{-1}=\frac{1}{2d}\begin{pmatrix}
    \overline{\bz}_1^T-1\\
    \vdots\\
    \overline{\bz}_{2d-1}^T-1
\end{pmatrix}.
\end{equation}
By definition of $P$, we have 
\begin{equation}\label{eq:w=Pu}
\bw=P\bu=\sum_{j=1}^{2d-1}u_j\bz_j.
\end{equation}
Moreover, for all $k,\ell\in\BRA{1,\ldots,2d-1}$,
\begin{equation*}
   \SCA{\bz_k,\bz_\ell}=\sum_{j=1}^{2d-1}\overline{\bz}_{j,k} \bz_{j,\ell}
=\sum_{j=1}^{2d-1}\eta^{j(k-\ell)}
=\eta^{k-\ell}\frac{1-\eta^{(2d-1)(k-\ell)}}{1-\eta^{k-\ell}}=\begin{cases}
   -1&\text{ if }k\neq \ell,\\
   2d-1&\text{ is }k=\ell,
\end{cases}
\end{equation*}
because $\eta^{2d(k-\ell)}=1$. We deduce from \eqref{eq:w=Pu} that
\begin{equation}\label{eq:SCA(v,w)}
\SCA{\bz_k,\bw}=\sum_{j=1}^{2d-1}u_j\SCA{\bz_k,\bz_j}=-\sum_{j=1}^{2d-1}{u_j}+2du_k
\end{equation}
and
\[
\SCA{\sum_{k=1}^{2d-1}\bz_k,\bw}=\sum_{k=1}^{2d-1}u_k,
\]
with $\sum_{k=1}^{2d-1}\bz_k=(-,1,\ldots,-1)^T$. Then Equation \eqref{eq:SCA(v,w)} implies $2du_k=\SCA{\bz_k,\bw}-\sum_{j=1}^{2d-1}w_j=(\overline{\bz}_k-1)^T\bw$ and we obtain \eqref{eq:inverse-P}.

To conclude, we recall that  for $j<d$, $\eta^j=\overline{\eta}^{2d-j}$, thus $\bz_j=\overline{\bz}_{2d-j}$. Moreover, since $\bz_d=\sum_{k=1}^{2d-1}(-1)^ke_k$, we have $\overline{\bz}_d-1=-2\sum_{j=1}^{d}\bbf_{2j-1}$. By definition \eqref{eq:def-u} of $\bu$, we derive
\begin{itemize}
    \item for $j\in\BRA{1,\ldots, d-1}$, $u_j=0$ $\Longleftrightarrow$ $u_{2d-j}=0$ $\Longleftrightarrow$  $\ds{\sum_{k=1}^{2d-1}w_k(\ee^{i\frac{kj\pi}{d}}-1)}=0$; 
    \item $u_d=0$ $\Longleftrightarrow$ $\sum_{j=1}^{d}w_{2j-1}=0$.
\end{itemize}
The proposition is proved.
\end{proof}

As a first consequence of Proposition~\ref{prop:det-KW}, we recover the result proved in Proposition~\ref{prop:supportW-dim2} on the support of the variable $\bW$ in dimension $2$. 

\begin{cor}
\label{cor:SuppW2}
In dimension $2$, discarding the case when $\bW$ is zero, the only possibilities for $\Supp(\bW)$ are $\Span\{\bbf_1-\bbf_2+\bbf_3\}$, $\Span\{\bbf_2,\bbf_1-\bbf_3\}$ and $\dR^3$.
\end{cor}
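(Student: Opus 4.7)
The plan is to specialize Proposition~\ref{prop:det-KW} to $d=2$ and then combine it with the closure arguments already developed in the proof of Proposition~\ref{prop:supportW-dim2}.

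First I would write out the three linear conditions produced by Proposition~\ref{prop:det-KW} for $d=2$. With $\eta = \ee^{i\pi/2}=i$, the conditions are
\begin{equation*}
\sum_{k=1}^{3} w_k\bigl(\ee^{ik j\pi/2}-1\bigr)=0,\qquad j\in\{1,2,3\}.
\end{equation*}
Since $\bw\in\dR^3$, the $j=3$ equation is the complex conjugate of the $j=1$ equation, so only two independent real conditions arise. Separating real and imaginary parts, one finds:
\textbf{(I)} $w_1+w_3=0$, coming from $j=2$, which cuts out the plane $\Span\{\bbf_2,\bbf_1-\bbf_3\}$;
\textbf{(II)} $w_1=w_3$ and $w_1+w_2=0$, coming from $j=1$ (equivalently $j=3$), which cuts out the line $\Span\{\bbf_1-\bbf_2+\bbf_3\}$.
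A direct check (the vector $(1,-1,1)$ does not satisfy $w_1+w_3=0$) shows that the line and the plane intersect only at the origin.

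Next, Proposition~\ref{prop:det-KW} tells us that the Krylov span $\Span\{\bw,K\bw,K^2\bw\}$ is the whole space $\dR^3$ when neither (I) nor (II) holds, equals the plane when only (I) holds, equals the line when only (II) holds (both the $j=1$ and $j=3$ equations being satisfied simultaneously for real $\bw$), and collapses to $\{0\}$ only for $\bw=0$. Reusing the closure step from the proof of Proposition~\ref{prop:supportW-dim2}, for every non-zero $\bw\in\Supp(\bW)$ the full Krylov span of $\bw$ lies in $\Supp(\bW)$: positivity of $p_0,\ldots,p_{2d-1}$ gives $K^k\bw\in\Supp(\bW)$, while the operation $(\bv,\bv')\mapsto t^{a}\bv+(1-t)^{a}\bv'$ from \eqref{eq:fixed-point-W-dimd} with $B=I$, together with the $t=1/2$ dilation trick, generates the linear span. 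Consequently $\Supp(\bW)$ is a union of such Krylov spans. If some $\bw\in\Supp(\bW)$ satisfies neither (I) nor (II), its span is $\dR^3$ and we are done.

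The only subtlety, and therefore the main obstacle in the case analysis, is ruling out a mixed support that contains a point $\bw_1$ on the line together with a point $\bw_2$ in the plane off the line. In that event, applying \eqref{eq:fixed-point-W-dimd} with $B=I$ (which happens with probability $p_0>0$) shows that $t^{a}\bw_1+(1-t)^{a}\bw_2\in\Supp(\bW)$ for every $t\in[0,1]$; but for generic $t$ this vector lies neither on the line nor on the plane, so its Krylov span is $\dR^3$ and $\Supp(\bW)=\dR^3$ after all. The remaining alternatives are therefore: either every non-zero $\bw\in\Supp(\bW)$ lies on the line, giving $\Supp(\bW)=\Span\{\bbf_1-\bbf_2+\bbf_3\}$; or some such $\bw$ lies in the plane $\setminus$ line and the above argument forces all of $\Supp(\bW)$ to sit inside the plane, giving $\Supp(\bW)=\Span\{\bbf_2,\bbf_1-\bbf_3\}$; or else $\Supp(\bW)=\dR^3$. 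This matches the three possibilities listed in the corollary.
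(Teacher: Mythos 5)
Your proof is correct and takes essentially the same route as the paper: you specialize Proposition~\ref{prop:det-KW} to $d=2$ (your conditions (I) and (II) are exactly the paper's $u_2=0\Leftrightarrow w_1=-w_3$ and $u_1=0\Leftrightarrow u_3=0\Leftrightarrow w_1=-w_2=w_3$, which the paper reads off from the explicit $P^{-1}$), and then reuse the closure argument from the proof of Proposition~\ref{prop:supportW-dim2} to pass from Krylov spans of individual support points to the support of $\bW$ itself. Your explicit handling of the mixed case (a nonzero point on the line together with a nonzero point in the plane forces $\Supp(\bW)=\dR^3$, via $t^{a}\bw_1+(1-t)^{a}\bw_2$ lying in neither subspace for $t\in(0,1)$) is a detail the paper leaves implicit, and you treat it correctly.
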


\begin{proof}
    For $d=2$, one has $\eta=i$ and with \eqref{eq:inverse-P} we obtain
 \begin{equation*}
 P^{-1}=\frac{1}{4}\begin{pmatrix}
     -1+i&-2&-1-i\\
     -2&0&-2\\
     -1-i&-2&-1+i
 \end{pmatrix}
 \end{equation*}
Then using Proposition~\ref{prop:det-KW} (or directly \eqref{eq:def-u}), we have 
\begin{itemize}
\item 
$u_1=0$ $\Longleftrightarrow$ $u_3=0$ $\Longleftrightarrow$ $w_1=-w_2=w_3$; 
\item $u_2=0$ $\Longleftrightarrow$ $w_1=-w_3$. \qedhere
\end{itemize} 
\end{proof}

More interestingly, Proposition~\ref{prop:det-KW} also allows us to describe the possible supports of $\bW$ in dimension three.
\begin{cor}
\label{cor:SuppW3}
In dimension $d=3$, discarding the case when $\bW$ is zero, $\Supp(\bW)$ is one of the following vector spaces:
\begin{enumerate}[label={\rm(\roman*)}]
    \item\label{it:dim1_1/1}$\Supp(\bW)=\Span\{\bbf_1-\bbf_2+\bbf_3-\bbf_4+\bbf_5\}$;
    \item\label{it:dim2_1/2}
       $\Supp(\bW)=\Span\{\bbf_2-\bbf_3+\bbf_5,-\bbf_1+\bbf_3-\bbf_4\}$; 
        \item\label{it:dim2_2/2} $\Supp(\bW)=\Span\{\bbf_2+\bbf_3-\bbf_5,-\bbf_1+\bbf_3+\bbf_4\}$;
        \item\label{it:dim3_1/2} $\Supp(\bW)=\Span\{\bbf_3-2\bbf_4+2\bbf_5,-\bbf_1+\bbf_4-2\bbf_5,2\bbf_1-\bbf_2+\bbf_5\}$; 
    \item\label{it:dim3_2/2} $\Supp(\bW)=\Span\{\bbf_3,-\bbf_1+\bbf_4,-\bbf_2+\bbf_5\}$;
     \item\label{it:dim4_1/1} $\Supp(\bW)=\Span\{\bbf_2,-\bbf_1+\bbf_3,-\bbf_2+\bbf_4,-\bbf_3+\bbf_5\}$; 
    \item\label{it:dim5_1/1} $\Supp(\bW)=\dR^{5}$.
\end{enumerate}
\end{cor}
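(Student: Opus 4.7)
The plan is to reduce the corollary to the classification of real $K$-invariant subspaces of $\dR^5$, and then match each such subspace to one of the listed cases. Three steps are required: (a) showing $\Supp(\bW)$ is a real $K$-invariant vector subspace of $\dR^5$; (b) enumerating such subspaces; (c) identifying each one with a case in the statement.

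For (a), I extend the argument of Proposition~\ref{prop:supportW-dim2}. The fixed-point equation~\eqref{eq:fixed-point-W-dimd} combined with the positivity of each $p_k$ yields that, for $\bw, \bv \in \Supp(\bW)$, $v \in [0,1]$, and $k \in \{0, \ldots, 5\}$, the vector $v^a \bw + (1-v)^a K^k \bv$ lies in $\Supp(\bW)$; letting $v \to 0^+$ gives $K$-invariance. Taking $\bw = \bv$ with $k=0$ and iterating shows $[1, +\infty) \cdot \bw \subset \Supp(\bW)$. The crucial algebraic input is the identity $\sum_{k=0}^{5} K^k = 0$, which holds because $K^6 = I_5$ and $1$ is not an eigenvalue of $K$. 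Thus $-\bw = K\bw + K^2\bw + \cdots + K^5\bw$ is a positive combination of elements of $\Supp(\bW)$ and, after suitable rescaling, can be obtained by iterating the fixed-point equation. Applying these operations to pairs $\bw_1, \bw_2 \in \Supp(\bW)$ shows $\Supp(\bW)$ is closed under arbitrary real linear combinations.

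For (b), over $\dC$ the matrix $K$ has five distinct eigenvalues $\eta^j$, $j \in \{1,\ldots,5\}$, with $\eta = e^{i\pi/3}$, and eigenvectors $\bz_j$ as in Proposition~\ref{prop:det-KW}. The complex $K$-invariant subspaces are $\bigoplus_{j \in S} \dC \bz_j$ for $S \subset \{1,\ldots,5\}$; real ones correspond to $S$ stable under the conjugation $j \mapsto 6 - j$ (since $\bar \bz_j = \bz_{6-j}$ and $\bz_3 \in \dR^5$). The three atoms $\{1,5\}$, $\{2,4\}$, $\{3\}$ generate $2^3 = 8$ symmetric subsets, so excluding $S = \emptyset$ there are exactly $7$ nontrivial real invariant subspaces $V_S$. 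For (c), by Proposition~\ref{prop:det-KW} the subspace $V_S$ is defined by the equations $\sum_k w_k (\eta^{jk} - 1) = 0$ for $j \notin S$, which pair to three independent real systems: condition $A$ (corresponding to $u_1 = u_5 = 0$), namely $w_1 + 3w_2 + 4w_3 + 3w_4 + w_5 = 0$ and $w_1 + w_2 - w_4 - w_5 = 0$; condition $B$ (corresponding to $u_2 = u_4 = 0$), namely $w_1 + w_4 = 0$ and $w_2 + w_5 = 0$; and condition $C$ (corresponding to $u_3 = 0$), namely $w_1 + w_3 + w_5 = 0$. Matching is then routine: $S = \{3\}$ (conditions $A$ and $B$) gives case (i); $S = \{2,4\}$ ($A$ and $C$) gives case (ii); $S = \{1,5\}$ ($B$ and $C$) gives case (iii); $S = \{2,3,4\}$ (only $A$) gives case (iv); $S = \{1,3,5\}$ (only $B$) gives case (v); $S = \{1,2,4,5\}$ (only $C$) is the hyperplane $w_1 + w_3 + w_5 = 0$, i.e.\ case (vi); and $S = \{1,\ldots,5\}$ is case (vii). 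For each case, one checks that the listed spanning vectors satisfy the prescribed equations and are linearly independent of the expected number.

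The main obstacle lies in step (a). While the algebraic mechanism is clear---the cyclic identity $\sum_{k=0}^5 K^k = 0$ converts positive-cone generation into full linear-span generation---carrying it out rigorously from the curves $v^a \bw + (1-v)^a K^k \bv$ requires care. The dimension-two case treated in Proposition~\ref{prop:supportW-dim2} used only four rotations and a single completion identity; here one must track six rotations and verify that iteration exhausts the real linear span of $\{\bw, K\bw, \ldots, K^5 \bw\}$.
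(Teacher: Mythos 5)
Your proposal is correct, and its computational core coincides with the paper's: both rest on Proposition~\ref{prop:det-KW}, that is, on the diagonalization of $K$, the pairing of the conjugate conditions ($u_1=0\Leftrightarrow u_5=0$, $u_2=0\Leftrightarrow u_4=0$, $u_3$ real), and the explicit identification of the resulting seven subspaces; your conditions $A$, $B$, $C$ are exactly the paper's equivalences for $u_1=0$, $u_2=0$, $u_3=0$, and your matching of the cases agrees with the paper's. Where you differ is in the organization. The paper enumerates the possible Krylov spaces $\Span\{\bw,K\bw,\ldots,K^{4}\bw\}$ of a single support vector and checks by hand that each listed span is $K$-cyclic, leaving the passage from ``Krylov space of a support point'' to ``the support itself'' implicit, by reference to the dimension-two argument of Proposition~\ref{prop:supportW-dim2}; you instead prove directly that $\Supp(\bW)$ is a real $K$-invariant linear subspace and then classify all such subspaces at once through conjugation-stable sets of eigenvalues (legitimate here because the eigenvalues of $K$ are simple). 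Your route buys a cleaner logical structure: no case-by-case computation of $K$ on the listed generators is needed, and the probabilistic input is isolated in one statement; the paper's route, in exchange, exhibits an explicit cyclic generator for each subspace. The one point you leave loose, the ``suitable rescaling'' in step (a), does close easily: since $v^{-a}\geq 1$ and $(1-v)^{-a}\geq 1$, pre-scaling the two arguments gives $\bx+\by=v^{a}\bigl(v^{-a}\bx\bigr)+(1-v)^{a}\bigl((1-v)^{-a}\by\bigr)\in\Supp(\bW)$, so the support is closed under addition; the Cayley--Hamilton identity $I_{5}+K+\cdots+K^{5}=0$ then gives $-\bw\in\Supp(\bW)$ for every $\bw\in\Supp(\bW)$, and writing $(s-t)\bw$ with $s,t\geq 1$ produces all real multiples, so $\Supp(\bW)$ is indeed a vector subspace, as you claim.
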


\begin{proof}
If $d=3$ then $\eta=\ee^{i\pi/3}$ and with \eqref{eq:inverse-P} 
 \begin{align*}
 P^{-1}&=\frac{1}{6}\begin{pmatrix}
     \ee^{i\pi/3}-1&\ee^{i2\pi/3}-1&-2&\ee^{-i2\pi/3}-1&\ee^{-i\pi/3}-1\\
      \ee^{i2\pi/3}-1& \ee^{-2i\pi/3}-1&0& \ee^{i2\pi/3}-1& \ee^{-i2\pi/3}-1\\
     -2&0&-2&0&-2\\
     \ee^{-i2\pi/3}-1&\ee^{i2\pi/3}-1&0&\ee^{-i2\pi/3}-1&\ee^{i2\pi/3}-1\\
 \ee^{-i\pi/3}-1&\ee^{-i2\pi/3}-1&-2&\ee^{2\pi/3}-1&\ee^{i\pi/3}-1
 \end{pmatrix}.
 \end{align*}
 We notice that 
\begin{itemize}
\item $u_1=0$ $\Longleftrightarrow$ $u_5=0$ $\Longleftrightarrow$ 
\begin{align*}
&w_1+3w_2+4w_3+3w_4+w_5=0\quad \text{and} \quad w_1+w_2-w_4-w_5=0\\
\Longleftrightarrow\quad 
&w_1+w_2-w_4-w_5=0\quad \text{and} \quad w_2+2w_3+2w_4+w_5=0;
\end{align*}
\item $u_2=0$ $\Longleftrightarrow$ $u_4=0$ $\Longleftrightarrow$ 
\begin{align*}
&w_1+w_2+w_4+w_5=0\quad \text{and} \quad w_1-w_2+w_4-w_5=0\\
\Longleftrightarrow\quad 
&w_1+w_4=0=w_2+w_5;
\end{align*}
\item $u_3=0$ $\Longleftrightarrow$ $w_1+w_3+w_5=0$.
\end{itemize}
Accordingly, we obtain that the Krylov space  generated by $\bw$ may be of
\begin{itemize}
    \item \textit{dimension $1$}, when only $u_1=u_2=0$. Then  $\bw\in\Span\{\bbf_1-\bbf_2+\bbf_3-\bbf_4+\bbf_5\}$. We also notice that $K(\bbf_1-\bbf_2+\bbf_3-\bbf_4+\bbf_5)=-(\bbf_1-\bbf_2+\bbf_3-\bbf_4+\bbf_5)$. We obtain \ref{it:dim1_1/1}.
    \item \textit{dimension $2$}, when only $u_1=u_3=0$ or $u_2=u_3=0$. We observe that 
    \begin{equation*}
    \{\bw\in \dR^{2d-1}:u_1=0=u_3\}=\Span\{\bbf_2-\bbf_3+\bbf_5,-\bbf_1+\bbf_3-\bbf_4\}\qquad \text{(case \ref{it:dim2_1/2})}
    \end{equation*}
    and
    \begin{equation*}
        \{\bw\in \dR^{2d-1}:u_2=0=u_3\}=\Span\{\bbf_2+\bbf_3-\bbf_5,-\bbf_1+\bbf_3+\bbf_4\}\qquad \text{(case \ref{it:dim2_2/2})}.
    \end{equation*} 
       Moreover, we have 
       \[
        K(\bbf_2-\bbf_3+\bbf_5)=-\bbf_1+\bbf_3-\bbf_4,\quad K^2(\bbf_2-\bbf_3+\bbf_5)=-(\bbf_2-\bbf_3+\bbf_5)-(-\bbf_1+\bbf_3-\bbf_4),\]
        and 
         \[K(\bbf_2+\bbf_3-\bbf_5)=-\bbf_1+\bbf_3+\bbf_4,\quad K^2(\bbf_2+\bbf_3-\bbf_5)=-\bbf_1+\bbf_3+\bbf_4-(\bbf_2+\bbf_3-\bbf_5).\]
    \item \textit{dimension $3$}, when only $u_1=0$ or $u_2=0$. We have 
    \begin{align*}
        \{\bw\in \dR^{2d-1}:u_1=0\}&=\Span\{\bbf_3-2\bbf_4+2\bbf_5,-\bbf_1+\bbf_4-2\bbf_5,2\bbf_1-\bbf_2+\bbf_5\} & \text{(case \ref{it:dim3_1/2})},\\
        \{\bw\in \dR^{2d-1}:u_2=0\}&=\Span\{\bbf_3, -\bbf_1+\bbf_4,-\bbf_2+\bbf_5\}
        & \text{(case \ref{it:dim3_2/2})}.\phantom{i}
    \end{align*}
    On one hand, we have 
    \begin{align*}
        K(\bbf_3-2\bbf_4+2\bbf_5)&=-\bbf_1+\bbf_4-2\bbf_5,\\ 
K^2(\bbf_3-2\bbf_4+2\bbf_5)&=2\bbf_1-\bbf_2+\bbf_5\\
        K^3(\bbf_3-2\bbf_4+2\bbf_5)&=2\bbf_1+2\bbf_2-\bbf_3\\
        &=-(\bbf_3-2\bbf_4+2\bbf_5)-2(-\bbf_1+\bbf_4-2\bbf_5)-2(2\bbf_1-\bbf_2+\bbf_5),
    \end{align*}
     and in the other hand,
   \begin{equation*}
        K\bbf_3=-\bbf_1+\bbf_4, \quad K^2\bbf_3=-\bbf_2+\bbf_5, \quad K^3\bbf_3=-\bbf_3.
    \end{equation*}
    \item \textit{dimension $4$}, when only $u_3=0$. We have 
    \begin{equation*}\{\bw\in \dR^{2d-1}:u_3=0\}=\Span\{\bbf_2,-\bbf_1+\bbf_3,-\bbf_2+\bbf_4,-\bbf_3+\bbf_5\}
    \end{equation*}
and $K\bbf_2=-\bbf_1+\bbf_3$, $K^2\bbf_2=-\bbf_2+\bbf_4$, $K^3\bbf_2=-\bbf_3+\bbf_5$ and $K^4\bbf_2=-\bbf_4=-\bbf_2-K^2\bbf_2$. This corresponds to case~\ref{it:dim4_1/1}.
    \item \textit{dimension $5$},  when $u_j\neq0$ for all $j$, and $\Supp(K)=\dR^{5}$ (case \ref{it:dim5_1/1}).\qedhere
\end{itemize}
\end{proof}

We now focus on $d=3$.
\begin{prop}\label{prop:densityW-dim3}
In dimension $3$, by  \eqref{eq:def_a_dim-d}, we have $a=\frac{6p-1}{5}$. Assume that $\frac{1}{2}<a<1$. The process $\bW$ of the urn limit for $d=3$ starting from a ball of color $1$, defined in Theorem~\ref{thm:Janson-dimd}, 
has a smooth bounded density on $\dR^{5}$, and its support is $\dR^5$.
\end{prop}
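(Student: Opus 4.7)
The plan is to combine Corollary~\ref{cor:SuppW3}, which classifies the possible supports of non-zero solutions of the fixed-point equation~\eqref{eq:fixed-point-W-dimd}, with Theorem~\ref{thm:density-W-dimd}, which delivers a smooth bounded density on $\dR^5$ as soon as $\Supp(\bW)=\dR^5$. The work thus reduces to ruling out the six proper subspaces \ref{it:dim1_1/1}--\ref{it:dim4_1/1} from that classification, together with the trivial case $\bW\equiv 0$, and then invoking Theorem~\ref{thm:density-W-dimd}. The strategy will mirror the dimension-two argument in Proposition~\ref{prop:supportW-dim2}: I would compute the mean $\dE[\bW]$ and show that it cannot lie in any of the six candidate subspaces.

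The central auxiliary computation is direct. By Theorem~\ref{thm:Janson-dimd} with $k=1$ and $d=3$, and using $\bv_1=\frac{1}{6}\sum_{i=1}^{6}\be_i$, one obtains $\dE[Y_{i,(1)}]=-\frac{1}{6\Gamma(a+1)}$ for every $i\in\{2,\dots,6\}$. Combined with the identity $\bW=-2(Y_2,\dots,Y_6)$ inherited from~\eqref{eq:lienX-W}, this gives
\begin{equation*}
\dE[\bW]=\frac{1}{3\Gamma(a+1)}\,(1,1,1,1,1)^T\neq 0,
\end{equation*}
so $\bW\not\equiv 0$ and all five coordinates of $\dE[\bW]$ are equal and non-zero.

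The remaining step is purely linear-algebraic. Since $\Supp(\bW)$ is a closed vector subspace of $\dR^5$ containing $\bW$ almost surely, it must contain $\dE[\bW]$, and I would therefore check that no subspace listed in cases~\ref{it:dim1_1/1}--\ref{it:dim4_1/1} of Corollary~\ref{cor:SuppW3} contains a non-zero vector with all coordinates equal. This is mechanical on the explicit spanning families: in each case, imposing coordinate equality on a generic element yields a homogeneous linear system whose only solution is zero. For instance, in case~\ref{it:dim1_1/1} the generic vector $\lambda(1,-1,1,-1,1)^T$ has alternating coordinates $\pm\lambda$ and forces $\lambda=0$; in case~\ref{it:dim4_1/1} a generic element equals $(-\beta,\alpha-\gamma,\beta-\delta,\gamma,\delta)^T$, and coordinate equality gives $\gamma=\delta=c$ and $\beta=-c$, whence $\beta-\delta=-2c=c$ forces $c=0$ and then $\alpha=\beta=\gamma=\delta=0$. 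This leaves only $\Supp(\bW)=\dR^5$. Since $a\in(1/2,1)$ translates to $p\in(7/12,1)$, the weights $(p_k)_{0\leq k\leq 5}$ appearing in~\eqref{eq:replacement-matrix-A} are all strictly positive, so Theorem~\ref{thm:density-W-dimd} applies and delivers the claimed smooth bounded density. I expect the main obstacle to be the bookkeeping for the six elementary checks, which is tedious but routine once the explicit spanning vectors from Corollary~\ref{cor:SuppW3} are kept in view.
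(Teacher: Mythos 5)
Your proposal is correct and follows essentially the same route as the paper: compute $\dE[\bW]=\frac{1}{3\Gamma(a+1)}(1,\ldots,1)^T$ from Theorem~\ref{thm:Janson-dimd} and the relation $\bW=-2(Y_2,\ldots,Y_6)$, rule out the proper subspaces of Corollary~\ref{cor:SuppW3} because none contains a non-zero constant vector, and conclude via Theorem~\ref{thm:density-W-dimd}. The paper phrases the exclusions as almost-sure linear relations (e.g.\ $W_2=-W_3$) contradicting the equal, non-zero expectations, which is the same computation in different words.
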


\begin{proof}
By Theorem~\ref{thm:density-W-dimd}, we only have to prove that $\Supp(\bW)=\dR^5$.

Let  $\bY_{(1)}\in\dR^{6}$ be the asymptotics of the urn process starting from a unique ball of color $1$, introduced in Theorem~\ref{thm:Janson-dimd}. We recall that $\bY_{(1)}=\sum_{i=2}^{6}W_i\bv_i$ with $\bv_i=\frac{1}{2}(\be_1-\be_i)$ for $i\geq 2$. 
Then, by Equation~\eqref{eq:expectation-W-2d},
\begin{equation*}
    \dE[\bY_{(1)}]=\frac{1}{2}\dE\SBRA{\begin{pmatrix}
        W_2+\ldots+W_{6}\\
        -W_2\\
        \vdots\\
        -W_{6}
    \end{pmatrix}} = \frac{-1}{6\Gamma(a+1)}\begin{pmatrix}-5\\1\\\vdots\\1\end{pmatrix}.
\end{equation*}
 Let us consider all possible supports of $\bW$ described in Corollary~\ref{cor:SuppW3}. Due to the fact that $\dE[W_2]=\ldots=\dE[W_6]$, all  cases happen to be impossible, except Case~\ref{it:dim5_1/1}. Indeed,
\begin{itemize}
    \item[\ref{it:dim1_1/1}] $\Supp(\bW)=\Span\{\bbf_1-\bbf_2+\bbf_3-\bbf_4+\bbf_5\}$ implies $W_2=-W_3$, while we should have $\dE[W_2]=\dE[W_3]$;
    \item[\ref{it:dim2_1/2}] $\Supp(\bW)=\Span\{\bbf_2-\bbf_3+\bbf_5,-\bbf_1+\bbf_3-\bbf_4\}$ implies $W_4=-W_2-W_3$, while we should have $\dE[W_4]=\dE[W_2]=\dE[W_3]$;
    \item[\ref{it:dim2_2/2}] $\Supp(\bW)=\Span\{\bbf_2+\bbf_3-\bbf_5,-\bbf_1+\bbf_3+\bbf_4\}$ implies $W_2=-W_5$, but we should have $\dE[W_2]=\dE[W_5]$;
    \item[\ref{it:dim3_1/2}] $\Supp(\bW)=\Span\{\bbf_3-2\bbf_4+2\bbf_5,-\bbf_1+\bbf_4-2\bbf_5,2\bbf_1-\bbf_2+\bbf_5\}$ implies $W_5=-W_2-2W_3-2W_4$ and we should have $\dE[W_5]=\dE[W_4]=\dE[W_2]=\dE[W_3]$;
    \item[\ref{it:dim3_2/2}] $\Supp(\bW)=\Span\{\bbf_3,-\bbf_1+\bbf_4,-\bbf_2+\bbf_5\}$ implies $W_2=-W_5$ and we should have $\dE[W_2]=\dE[W_5]$;
   \item[\ref{it:dim4_1/1}] $\Supp(\bW)=\Span\{\bbf_2,-\bbf_1+\bbf_3,-\bbf_2+\bbf_4,-\bbf_3+\bbf_5\}$ implies $W_4=-W_2-W_6$ and we should have $\dE[W_2]=\dE[W_4]=\dE[W_6]$.
\end{itemize}
Consequently, $\Supp(\bW)=\dR^5$, which means that Theorem~\ref{thm:density-W-dimd} is satisfied and the result is proved.
\end{proof}

We see in the above proof of Proposition~\ref{prop:densityW-dim3}  that we do not need to describe exactly all the possibilities for $\Supp(\bW)$ to obtain the result, but we only need a well-adapted relationship between the coordinates of $\bW$ when the support is not $\dR^{2d-1}$. Even if it is effective, the method presented in this article does not seem optimal, because we are looking for too much information on the general solutions of the fixed-point equation \eqref{eq:fixed-point-W-dimd} to deduce the support of $\bW$ in the specific case of the asymptotic urn process.

\subsection{The distribution of superdiffusive limit of the MERW}
\label{subsec:application_density_MERW}

We now focus on the asymptotic limit $\bL\in\dR^d$  of the superdiffusive MERW to prove Theorem~\ref{cor:densityL-dim3}. We only explain the ideas when the first step of MERW is a.s.\ in direction $\be_1$. The general case will follow immediately, because $\bL$ is a discrete mixture of the variables $\bL_{(k)}$, $k\in\BRA {1,\ldots,2d}$, see \eqref{eq:mixture}.

We recall the relation \eqref{eq:L-W-dimd} between $\bL$ and the urn asymptotic $\bW$, defined in Theorem~\ref{thm:Janson-dimd}. As mentioned earlier
(see page \pageref{eq:L-W-dimd}), if $\Supp(\bW)=\dR^{2d-1}$, we immediately deduce that $\bL$ admits a smooth positive density with respect to the Lebesgue measure in $\dR^d$. As we do not have a general result on the support of the solution of the fixed-point equation \eqref{eq:fixed-point-W-dimd}, for any dimension $d\geq2$, we cannot have a general result for $\bL$. However, by Propositions~\ref{prop:supportW-dim2} and \ref{prop:densityW-dim3} in dimension $2$ and $3$, Theorem~\ref{cor:densityL-dim3} is easily deduced from Theorem~\ref{thm:density-W-dimd}, and Corollaries~\ref{cor:SuppW2} and~\ref{cor:SuppW3}.

\subsection{Moments of the asymptotic distributions in high dimension}
\label{sec:moments-dimd}

We now study the moments of the limit urn process $\bY$ and the limit $\bL$ of the MERW. To that aim, we use the fixed-point equation \eqref{eq:fixed-point-X-color-vector} and not \eqref{eq:fixed-point-W-dimd}, because in \eqref{eq:fixed-point-W-dimd} the norm of the matrix $B$ is not smaller than $1$, which do not allow us to have good estimates of the moments of $\bW$.

When the first step probability vector $q$ of the MERW is the uniform distribution on the $2d$-directions,  it is proved in \cite[Thm~3.8]{BercuLaulin2019} that $\dE[\bL]=0$ and
\begin{equation*}
   \dE\SBRA{\bL\bL^T}=\frac{1}{d(2a-1)\Gamma(2a)}I_d.
\end{equation*}

\begin{thm}
\label{thm:dim_d_moments}
For $d\geq 1$, let $\bY$ be the asymptotic urn process defined in Theorem~\ref{thm:Janson-dimd}, and $\bL$ be  the asymptotics of the MERW with memory parameter $p$ and first step distribution $q$, defined in Theorem~\ref{thm:limitERW}. Then $\bY$ and $\bL$ have finite moments of all order, and their probability distribution are determined by their moments.
Moreover, 
    the moment-generating function of $\bY$, $\bL$, $\NRM{\bY}$ and $\NRM{\bL}^2$ are respectively defined on $\dR^{2d}$, $\dR^{d}$ and $\dR$: for all $t\in\dR^{2d}$, $r\in\dR^d$ and $s\in\dR$,
    \begin{equation*}
       \dE\SBRA{\ee^{\SCA{t,\bY}}}<\infty, \quad \dE\SBRA{\ee^{s\NRM{\bY}^2}}<\infty,\quad
  \dE\SBRA{\ee^{\SCA{r,\bL}}}<\infty \quad \text{and}\quad \dE\SBRA{\ee^{s\NRM{\bL}^2}}<\infty.
    \end{equation*}
\end{thm}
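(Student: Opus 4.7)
The plan is to mimic the one-dimensional strategy of Section~\ref{sec:moments-dim1}, exploiting crucially that the replacement matrix $A$ appearing in \eqref{eq:fixed-point-X-color-vector} is almost surely a permutation matrix (a power of $J_{2d}$), hence an isometry for the Euclidean norm. The workhorse estimate will be a multidimensional analog of Lemma~\ref{lem:GammaMomentsbound} for $\|\bY_{(k)}\|$, and everything else (Carleman, Laplace transforms, transfer to $\bL$) will follow the dimension~$1$ template.

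First I would work with the fixed-point equation \eqref{eq:bX-2d}, which reads $\bY_{(k)} \overset{\cL}{=} V^{a} \bY_{(k)}^{(1)} + (1-V)^{a}\sum_{j=1}^{2d}\zeta_{j,(k)}\bY_{(j)}^{(2)}$. Since exactly one $\zeta_{j,(k)}$ equals $1$, this collapses to $\bY_{(k)} \overset{\cL}{=} V^{a}\bY_{(k)}^{(1)} + (1-V)^{a}\bY_{(J_k)}^{(2)}$ for some random index $J_k$ independent of the other quantities. By the obvious permutation symmetry of the urn in the $2d$ colors, $\|\bY_{(k)}\|$ has the same distribution for every $k$; call its $n$-th moment $\mu_n$. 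The triangle inequality then gives the same polynomial recursion as in Lemma~\ref{lem:GammaMomentsbound}, namely
\begin{equation*}
\mu_n \leq 2\dE[V^{na}]\mu_n + \sum_{j=1}^{n-1}\binom{n}{j}\dE[V^{ja}(1-V)^{(n-j)a}]\mu_j\mu_{n-j},
\end{equation*}
from which, using $\dE[V^{na}]=1/(1+na)$ and the beta identity, induction on $n$ yields a constant $C>0$ (depending only on $a$ and $\dE[\|\bY\|^2]$, which is finite by Lemma~\ref{lem:cov_matrix}) with
\begin{equation*}
\frac{\dE[\|\bY\|^n]}{n!} \leq \frac{C^n}{\Gamma(na+1)}\Bigl(\frac{2}{a}\Bigr)^{n-1}, \qquad n\geq 1.
\end{equation*}
The base cases $n=1,2$ use Cauchy--Schwarz and Lemma~\ref{lem:cov_matrix}; the inductive step is literally the same computation as in the proof of Lemma~\ref{lem:GammaMomentsbound}.

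From this bound, Stirling yields $\dE[\|\bY\|^{2k}]^{-1/(2k)} \gtrsim k^{a-1}$, and since $a-1 > -1$ the Carleman criterion (Theorem~\ref{thm:carleman}) applies to the real random variable $\SCA{t,\bY}$ for each $t\in\dR^{2d}$, because $|\SCA{t,\bY}|\leq \|t\|\,\|\bY\|$ so the same tail estimate carries over, and hence the distribution of $\bY$ is determined by its moments (through its characteristic function, via determining all linear combinations of its coordinates, by the Cram\'er-Wold device). The estimate above also shows that the Taylor series of $\dE[\ee^{t\|\bY\|}] = \sum_{n\geq 0} t^n\dE[\|\bY\|^n]/n!$ has infinite radius of convergence, so by Tonelli $\dE[\ee^{\SCA{r,\bY}}]\leq \dE[\ee^{\|r\|\,\|\bY\|}]<\infty$ for every $r\in\dR^{2d}$. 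For $\dE[\ee^{s\|\bY\|^2}]$, the exact same computation as in Corollary~\ref{cor:laplaceL^2} works: bound $\ee^{s\|\bY\|^2}$ by its power series, insert the moment estimate, and use that $a>1/2$ implies $\frac{(2n)!}{n!\Gamma(2na+1)}\bigl(\tfrac{2C}{a}\bigr)^{2n}|s|^n$ has ratio tending to $0$, by Stirling (the key exponent $2a-1>0$).

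Finally, for $\bL$, I would use the linear relation \eqref{eq:L-W-dimd} between $\bL_{(k)}$ and $\bW(\be_k)$, together with \eqref{eq:lienX-W} relating $\bW$ to $\bY$: this gives an explicit deterministic matrix $M$ with $\bL_{(k)} = M\bY_{(k)}$, hence $\|\bL_{(k)}\| \leq \|M\|\,\|\bY_{(k)}\|$. Since the MERW limit $\bL$ for general first-step distribution $q$ is a finite mixture of the $\bL_{(k)}$ (see \eqref{eq:mixture}), all moment bounds and exponential integrability transfer immediately from $\bY$ to $\bL$. The only delicate point is the moment-determinacy of the multivariate $\bL$: as above, it follows from Carleman applied to every linear form $\SCA{r,\bL}$ together with the Cram\'er-Wold characterization of multivariate distributions by their one-dimensional projections. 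The main obstacle is really just the initial moment recursion; once one notices that $\|A\bY\|=\|\bY\|$ almost surely and that the $\bY_{(k)}$ are equidistributed in norm, the dimension-$1$ proof transcribes essentially verbatim.
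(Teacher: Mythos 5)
Your proposal is correct and follows essentially the same route as the paper: a norm-moment recursion derived from a fixed-point equation by exploiting that the random replacement matrices are permutations (the paper groups coordinates into the vectors $\bY^j$ satisfying \eqref{eq:fixed-point-X-color-vector}, while you use \eqref{eq:bX-2d} directly together with the equidistribution of $\NRM{\bY_{(k)}}$ over $k$), then the Lemma~\ref{lem:GammaMomentsbound}-type induction, a Carleman-type moment-determinacy argument (the paper via the extended multivariate criterion, you via Carleman on linear forms plus Cram\'er--Wold, which is equivalent), and the same exponential-moment and transfer-to-$\bL$ steps. The only slip is the justification of the base case $\dE\SBRA{\NRM{\bY}^2}<\infty$: Lemma~\ref{lem:cov_matrix} concerns $\bL$ (a non-injective linear image of $\bW$), not $\bY$, so it does not bound $\dE\SBRA{\NRM{\bY}^2}$; the correct reference is Janson's Theorem~3.9(i), as cited in the paper.
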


To prove this theorem, we use the following extended Carleman's criterion (see \cite[Thm~2.3]{deJeu03}) on the distribution of $\bY$.
\begin{thm}[Extended Carleman's criterion]\label{thm:carleman-dimd}
Let $\bX=(X_1,\ldots, X_{D})$ be a random vector with finite moments of all order. If, for any $i\in\BRA{1,\ldots,D}$, $\sum_{\ell\geq 1}\PAR{\dE[|X_i|^{2\ell}]}^{-1/2\ell}=\infty$, the distribution of $\bX$ is uniquely determined by its moments.
\end{thm}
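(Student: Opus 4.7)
The plan is to mirror Section~\ref{sec:moments-dim1} verbatim, the only genuinely new ingredient being that the random replacement matrix $A$ in \eqref{eq:replacement-matrix-A} is a permutation matrix (it is $I_{2d}$ or a power of the cyclic shift $J_{2d}$) and hence an Euclidean isometry: $\NRM{A\bx}=\NRM{\bx}$ for all $\bx\in\dR^{2d}$. Fixing a color $k$ and writing $\bY:=\bY_{(k)}$, equation \eqref{eq:bX-2d} and the fact that $\zeta_{(k)}$ is a one-hot random vector give
\begin{equation*}
\bY \overset{\cL}= V^{a}\bY^{(1)} + (1-V)^{a} \bY^{(2)}_{(J)},
\end{equation*}
where $J$ is a random color independent of everything else. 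The color-exchangeability of $A$ (or, equivalently, of $\zeta_{(\cdot)}$) shows that $\dE[\NRM{\bY_{(j)}}^n]$ does not depend on $j$, so setting $M_n:=\dE[\NRM{\bY}^n]$ and applying the triangle inequality yields
\begin{equation*}
M_n \leq \dE\SBRA{\PAR{V^{a}\NRM{\bY^{(1)}}+(1-V)^{a}\NRM{\bY^{(2)}}}^n},
\end{equation*}
which is formally the same recursion driving the proof of Lemma~\ref{lem:GammaMomentsbound}.

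From here I would execute the induction of Lemma~\ref{lem:GammaMomentsbound} line for line, using the Beta-integral identity $\dE[V^{ja}(1-V)^{(n-j)a}]=\Gamma(ja+1)\Gamma((n-j)a+1)/\Gamma(na+2)$ and the base cases $M_1,M_2<\infty$ (provided by Theorem~\ref{thm:Janson-dimd} and Lemma~\ref{lem:cov_matrix}; alternatively by the $L^2$ convergence in Janson's theorem). This produces
\begin{equation*}
\frac{M_n}{n!}\leq \frac{C^n}{\Gamma(na+1)}\PAR{\frac{2}{a}}^{n-1},\qquad n\geq 1,
\end{equation*}
for some constant $C>0$. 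Since $|Y_i|\leq \NRM{\bY}$ for each coordinate, Stirling's formula yields $\PAR{\dE[|Y_i|^{2\ell}]}^{-1/(2\ell)}\gtrsim \ell^{a-1}$, and the divergence of $\sum_\ell \ell^{a-1}$ (because $a<1$) together with the extended Carleman criterion (Theorem~\ref{thm:carleman-dimd}) proves that $\bY$ is moment-determined. The moment-generating function bounds then follow exactly as in Corollaries~\ref{cor:laplaceL} and~\ref{cor:laplaceL^2}: Cauchy--Schwarz gives $\SCA{t,\bY}\leq \NRM{t}\NRM{\bY}$, so
\begin{equation*}
\dE\SBRA{\ee^{\SCA{t,\bY}}}\leq \sum_{n\geq 0}\frac{\NRM{t}^n M_n}{n!},
\end{equation*}
which converges for every $t\in\dR^{2d}$; and for the squared norm, $\dE[\ee^{s\NRM{\bY}^2}]\leq \sum_n |s|^n M_{2n}/n!$ has infinite radius of convergence by the ratio test, since $2a>1$ exactly as in the proof of Corollary~\ref{cor:laplaceL^2}.

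The transfer to the MERW limit $\bL$ is then immediate. The first-step representation \eqref{eq:mixture} writes $\bL$ as a discrete mixture of the $\bL_{(k)}$, $k\in\BRA{1,\ldots,2d}$, while \eqref{eq:L-W-dimd} expresses each $\bL_{(k)}$ as the image of $\bY_{(k)}$ under a fixed linear map of size $d\times (2d-1)$. In particular $\NRM{\bL_{(k)}}\leq c_d\NRM{\bY_{(k)}}$ for a dimension-dependent constant $c_d$, and conditioning on the first step only inserts a finite convex combination. All the moment and exponential-moment estimates established for $\bY$ therefore transfer coordinate-wise to $\bL$, and a second application of the extended Carleman criterion yields moment-determinacy of $\bL$. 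The only technical subtlety I expect lies in Paragraph~1: one must recognise and exploit the isometric (permutation) nature of $A$ so that the vector-valued fixed-point equation collapses, after taking Euclidean norms, to the same scalar recursion as in dimension~$1$; once this reduction is in place no genuinely new difficulty arises in higher dimension.
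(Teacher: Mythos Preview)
You have proved the wrong statement. Theorem~\ref{thm:carleman-dimd} is the \emph{extended Carleman criterion} itself, quoted verbatim from \cite[Thm~2.3]{deJeu03}; the paper does not prove it, and your proposal makes no attempt to prove it either. What you have written is a sketch of the proof of Theorem~\ref{thm:dim_d_moments}, the result that \emph{applies} Theorem~\ref{thm:carleman-dimd} to the urn limit $\bY$ and the MERW limit $\bL$. Everything you say about isometries, the recursion on $M_n$, the Stirling estimate, and the transfer to $\bL$ is aimed at showing that the hypotheses of Carleman's criterion are met, not at establishing the criterion.

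As a proof of Theorem~\ref{thm:dim_d_moments} your sketch is essentially correct and close to the paper's argument, with one organisational difference worth noting. The paper runs the induction of Lemma~\ref{lem:GammaMomentsbound} on the auxiliary vector $\bY^{j}=(Y_{j,(k)})_{1\leq k\leq 2d}$ via the fixed-point equation~\eqref{eq:fixed-point-X-color-vector}, where $\NRM{A\by}=\NRM{\by}$ is the relevant isometry; it then passes to $\bY_{(k)}$ through $\NRM{\bY_{(k)}}\leq\sum_j\NRM{\bY^{j}}$. You instead work directly with $\bY_{(k)}$ via~\eqref{eq:bX-2d}, using the colour symmetry $\bY_{(j)}\overset{\cL}{=}J_{2d}^{j-k}\bY_{(k)}$ to argue that $\NRM{\bY^{(2)}_{(J)}}$ has the same law as $\NRM{\bY}$. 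This shortcut is legitimate and slightly more direct, but it is a proof of Theorem~\ref{thm:dim_d_moments}, not of the statement you were asked about.
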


\begin{proof}[Proof of Theorem~\ref{thm:dim_d_moments}]
We study the process $\bY$; the result for $\bL$ will easily follow from \eqref{eq:L-W-dimd}.
We first consider the random vector $\bY^j = \PAR{Y_{j,(k)}}_{1\leq k\leq 2d}$ introduced in Section~\ref{sec:fixed-point-dimd}, where  $Y_{j,(k)}$ is the $j$-th coordinate of the limiting vector $\bY_{(k)}$ starting with one ball of the $k$-th color. As noticed in that section, this vector is solution to the fixed-point equation~\eqref{eq:fixed-point-X-color-vector}, recalled below
\begin{equation*}
    \bY^{j} = V^a \bY^{j,(1)}+(1-V)^a A \bY^{j,(2)},
\end{equation*}
where $A=(A_{i_j})$ has the same law as the random replacement matrix \eqref{eq:replacement-matrix-A} of the urn process.

It is known \cite[Thm~3.9 (i)]{Janson2004} that the asymptotic urn process has finite second-order moment. 
 We also easily note that for all $\by\in \dR^{2d}$, $\NRM{A\by}=\NRM{\by}$. Then, for 
 \begin{equation*}
     C_j=\Gamma(a+1)\dE\SBRA{\NRM{\bY^j}^2}^{1/2},
 \end{equation*}
 using exactly the same proof as for Lemma~\ref{lem:GammaMomentsbound}, we prove by induction that for all $j\in\BRA{1,\ldots,2d}$ and all $\ell\geq 1$,
 \begin{equation}\label{eq:induction-gamma-dimd}
 \frac{\dE\SBRA{\NRM{\bY^j}^\ell}}{\ell!}\leq \frac{C_j^\ell}{\Gamma(\ell a+1)}\PAR{\frac{2}{a}}^{\ell-1}.
 \end{equation}
Since for all $k$, $\NRM{Y_{(k)}}\leq \sum_{j=1}^{2d}\NRM{\bY^j}$, taking $C=\max_{1\leq j\leq 2d}C_j$, the moments of $\bY$ satisfy, for all $\ell\geq 1$,
 \[
  \frac{\dE\SBRA{\NRM{\bY}^\ell}}{\ell!}\leq \frac{2d C^\ell}{\Gamma(\ell a+1)}\PAR{\frac{2}{a}}^{\ell-1}.
 \]
As in Theorem~\ref{thm:exp-moments}, we deduce  that  $\bY$ and $\NRM{\bY}$ satisfy Carleman's criterions (Theorems~\ref{thm:carleman} and \ref{thm:carleman-dimd}). 
We also easily obtain the existence of exponential moments for $\bY$ and $\NRM{\bY}^2$, using the same proofs as for Corollaries~\ref{cor:laplaceL} and \ref{cor:laplaceL^2}.
\end{proof}

\subsection*{Acknowledgements}
This project has benefited from discussions with several colleagues, who we would like to thank for their enthusiasm and their generosity. We especially thank Gerold Alsmeyer, Bernard Bercu, Jean Bertoin, Brigitte Chauvin, Tony Guttmann, Slim Kammoun, Daniela Portillo Del Valle and Nicolas Pouyanne.

\small
\bibliographystyle{abbrv}
\bibliography{biblio}
\normalsize

\end{document}